\newcommand{\DD}{\textnormal{D}}
\theoremstyle{plain}
\newtheorem{Theo}{Theorem}[section]
\newtheorem{lem}[Theo]{Lemma}
\newtheorem{cor}[Theo]{Corollary}
\newtheorem{prop}[Theo]{Proposition}
\theoremstyle{plain} \theoremstyle{definition}
\newtheorem{defi}[Theo]{Definition}
\theoremstyle{remark}
\newtheorem{Rema}[Theo]{Remark}
\newtheorem*{rema*}{Remark}
\newcommand{\ZZ}{\mathbb{Z}}
\newcommand{\NN}{\mathbb{N}}
\newcommand{\RR}{\mathbb{R}}
\numberwithin{equation}{section}
\date{}
\begin{document}

\title[Global well-posedness for Boussinesq System]
{Global existence and uniqueness for a non linear Boussinesq system in dimension two}
\author[Samira SULAIMAN]{SAMIRA SULAIMAN}
\address{IRMAR, Universit\'e de Rennes 1\\ Campus de Beaulieu\\ 35~042 Rennes cedex\\ France}
\email{samira.sulaiman@univ-rennes1.fr}

\begin{abstract}
We study the global well-posedness of a two-dimensional Boussinesq system which couples the incompressible Euler equation for the velocity and a transport equation with fractional diffusion of type $\vert \DD\vert^{\alpha}$ for the temperature. 
We prove that for $\alpha>1$  there exists a unique global solution for initial data with critical regularities. 
\end{abstract}

\maketitle

\section{Introduction} In this paper, we study the two-dimensional Euler-Boussinesq system describing the phenomenon of convection in an incompressible  fluid . This system is composed of the \textit{Euler} equations coupled with a transport-diffusion equation governing the evolution of the density. This system is given by
\begin{equation}\label{T1} 
\left\{\begin{array}{ll} 
\partial_{t}v+v\cdot\nabla v+\nabla p=F(\theta) \\ 
\partial_{t}\theta+v\cdot\nabla\theta+\kappa\vert \DD\vert^{\alpha}\theta=0\\
\textnormal{div}v=0\\
v_{| t=0}=v^{0}, \quad \theta_{| t=0}=\theta^{0}.  
\end{array} \right. 
\end{equation}
Above, $v=v(x,t) \in\RR^2$, denotes the velocity vector-field, $p$ is the pressure and  $\theta= \theta(x,t)$ is the temperature . The function $F (\theta)= (F_{1}(\theta),F_{2}(\theta))$ is a vector -valued function such that $F \in \mathcal{C}^{5}$ and $F(0)=0$, $\alpha$ is a real number in $]0,2]$ and the nonnegative parameter $\kappa$ denotes the molecular diffusivity. The fractional Laplacian  $\vert \DD\vert^{\alpha}$ is defined in a standard way through its Fourier transform by 
$$\widehat{\vert \DD\vert^{\alpha}f(\xi)}=\vert \xi \vert^{\alpha}\widehat{f(\xi)}.$$
The system \eqref{T1} generalizes the classical Boussinesq system where $F(\theta)=(0,\theta).$ It is a special case of a class of generalized Boussinesq system introduced  in \cite{YB}.\\
Before discussing the mathematical aspects of our model with general $F$ we will first focus on the special case $F(\theta)=(0,\theta)$ and review the most significant contributions in the theory of global existence and uniqueness. 
We note that in space in dimension two the vorticity $\omega=\partial_{1}v^{2}-\partial_{2}v^{1}$ satisfies the transport-diffusion equation 
$$\partial_{t}\omega+v\cdot\nabla\omega=\partial_{1}\theta.$$
It is well-known that a Beale-Kato-Majda criterion \cite{BKM} can be applied to our model and thus the control of the vorticity in $L^\infty$ space is a crucial step to get  global well-posedness results with smooth initial data. Now, by applying a maximum principle we get
$$\Vert\omega(t)\Vert_{L^{\infty}}\le\Vert\omega^{0}\Vert_{L^{\infty}}+ \int^{t}_{0}\Vert\partial_1\theta(\tau)\Vert_{L^{\infty}}d\tau.$$  
The difficulty is then reduced to estimate the quantity $\int^{t}_{0}\Vert\partial_{1}\theta(\tau)\Vert_{L^{\infty}}d\tau$ and for this purpose the use the the smoothing effects of the transport-diffusion equation is crucial especially for sub-critical dissipation, that is $\alpha>1.$  \\
For the full viscous equations i.e when $\kappa>0$ and $\alpha=2$, the  global well-posedness problem  is solved recently in a series of papers \cite{Cha, dp1, hk2}. 

In \cite{Cha}, Chae proved the global existence and uniqueness for initial data $(v^0,\theta^0)\in H^s\times H^{s}$ with $s>2$. This result was extended in \cite{hk1} by Hmidi and Keraani to initial data $v^0\in B_{p,1}^{\frac{2}{p}+1}$ and $\theta^0\in B_{p,1}^{\frac{2}{p}-1}\cap L^r,$ with  $r\in]2,\infty[$. Recently the study of global existence of Yudovich solutions  has been done in \cite{dp1}.\\
For more  weaker dissipation that is  $1\le\alpha<2$, global well-posedness results have been recently obtained. Indeed,  the subcritical case  $1<\alpha$ was solved by Hmidi and Zerguine \cite{HZ} with critical regularities i.e  $v^{0}\in B^{1+\frac{2}{p}}_{p,1}$ and $\theta^{0}\in B^{-\alpha+1+\frac{2}{p}}_{p,1}\cap L^{r}\;,\,\frac{2}{\alpha-1}<r<\infty.$ They used the maximal smoothing effects for a transport-diffusion which can be roughly speaking summarized as follows: for every $0<\varepsilon<1$
$$
\|\theta\|_{L^1_tC^{1-\varepsilon}}\le C_0(1+t+\|\omega\|_{L^1_tL^\infty}),
$$
with $C_0$ a constant depending on the size of the initial data.
The critical case $\alpha=1,$ is more subtle because the dissipation has the same rate as the possible amplification of the vorticity by $\partial_1\theta$. In \cite{hkr}, Hmidi, Keraani and Rousset gave a positive answer for global wel-posedness by using a hidden cancellation given by the coupling.\\
The main goal of this paper is to extend the results of \cite{HZ} for general source term $F(\theta).$Our result reads as follows (see section 2 for the definitions and the basic properties of Besov spaces).   
\begin{Theo}\label{TX} Let $(\alpha, p)\in]1, 2]\times]1, \infty[$, $v^0\in B_{p, 1}^{1+\frac{2}{p}}$ be a divergence free vector-field of $\RR^2$, $ \theta^0\in B_{p, 1}^{-\alpha+1+\frac{2}{p}}\cap L^\infty$ and $F \in \mathcal{C}^{5}(\RR,\RR).$ 
Then there exists a unique global solution $(v, \theta)$ for the system \eqref{T1} such that
\begin{equation*}
v \in\mathcal{C}\big(\RR_+; B_{p, 1}^{1+\frac{2}{p}}\big)\quad and\quad \theta\in L_{loc}^{\infty}\big(\RR_+; B_{p, 1}^{-\alpha+1+\frac{2}{p}}\cap L^\infty\big)\cap L_{loc}^1\big(\RR_+; Lip \big)
\end{equation*}
\end{Theo}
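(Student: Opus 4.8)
\emph{Proof strategy.} The plan is to adapt the good-unknown strategy of Hmidi and Zerguine \cite{HZ}, which treats the linear source $F(\theta)=(0,\theta)$, and to control the extra terms produced by a genuinely nonlinear $F$ through composition and commutator estimates in Besov spaces. Throughout I would work with the vorticity $\omega=\partial_1 v^2-\partial_2 v^1$, which in two dimensions obeys
\[
\partial_t\omega+v\cdot\nabla\omega=\partial_1 F_2(\theta)-\partial_2 F_1(\theta).
\]
Since $\nabla v$ is recovered from $\omega$ by a Calder\'on--Zygmund operator and $B^{\frac{2}{p}}_{p,1}\hookrightarrow L^\infty$, the central task is a global-in-time bound on $\|\omega\|_{L^1_tL^\infty}$; granting this, a Beale--Kato--Majda argument \cite{BKM} together with the maximal smoothing effect recalled in the introduction yields persistence of the critical regularities. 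As preliminary bounds I would note that $|\DD|^\alpha$ enforces a maximum principle, so $\|\theta(t)\|_{L^\infty}\le\|\theta^0\|_{L^\infty}$; since $F\in\mathcal{C}^5$ with $F(0)=0$ and the range of $\theta$ stays in a fixed compact set, the quantities $F_i(\theta),F_i'(\theta),F_i''(\theta)$ stay bounded, which is what makes the composition estimates below admissible.

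The heart of the argument is the construction of a good unknown. Using $\partial_t F_i(\theta)+v\cdot\nabla F_i(\theta)+\kappa|\DD|^\alpha F_i(\theta)=\kappa R_i$ with the fractional-chain-rule remainder $R_i:=|\DD|^\alpha F_i(\theta)-F_i'(\theta)|\DD|^\alpha\theta$, and applying the operators $\partial_j|\DD|^{-\alpha}$ (of negative order $1-\alpha$ because $\alpha>1$) to the equations for $F_2(\theta)$ and $F_1(\theta)$, a direct computation shows that
\[
\Gamma:=\omega+\frac1\kappa\big(\partial_1|\DD|^{-\alpha}F_2(\theta)-\partial_2|\DD|^{-\alpha}F_1(\theta)\big)
\]
satisfies the pure transport equation
\[
\partial_t\Gamma+v\cdot\nabla\Gamma=\frac1\kappa\big([v\cdot\nabla,\partial_1|\DD|^{-\alpha}]F_2(\theta)-[v\cdot\nabla,\partial_2|\DD|^{-\alpha}]F_1(\theta)\big)+\partial_1|\DD|^{-\alpha}R_2-\partial_2|\DD|^{-\alpha}R_1.
\]
The offending first derivative that amplifies the vorticity has thus been removed from the source: the lower-order part $\frac1\kappa(\partial_1|\DD|^{-\alpha}F_2(\theta)-\partial_2|\DD|^{-\alpha}F_1(\theta))$ is bounded in $L^\infty$ because $\partial_j|\DD|^{-\alpha}$ gains $\alpha-1>0$ derivatives, so an $L^\infty$ transport estimate on $\Gamma$ will transfer to $\omega$ provided the right-hand side is integrable in time.

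Bounding that right-hand side is the step I expect to be the main obstacle, and it is exactly where the nonlinearity of $F$ costs effort. The commutators $[v\cdot\nabla,\partial_j|\DD|^{-\alpha}]F_i(\theta)$ I would estimate by the commutator lemmas for smoothing multipliers, paying with $\|\nabla v\|_{L^\infty}$ (controlled, up to logarithms and lower-order norms, by $\|\omega\|_{L^\infty}$) and gaining the smoothing of $|\DD|^{-\alpha}$ on $F_i(\theta)$, whose Besov norm is tied to that of $\theta$ by a composition estimate using $F\in\mathcal{C}^5$. The genuinely new terms are $\partial_j|\DD|^{-\alpha}R_i$: applying a Bony decomposition to $F_i(\theta)$ I would show that $R_i$ gains regularity relative to $|\DD|^\alpha\theta$, so that after the smoothing $\partial_j|\DD|^{-\alpha}$ the term is bounded in $L^\infty$ by a norm no stronger than $\|\theta\|_{C^{1-\EE}}$, which is integrable in time through the smoothing effect $\|\theta\|_{L^1_tC^{1-\EE}}\le C_0(1+t+\|\omega\|_{L^1_tL^\infty})$. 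Feeding these estimates into the transport bound for $\Gamma$ produces a closed inequality for $\|\omega\|_{L^1_tL^\infty}$; because $\alpha>1$ gives a strict gain of regularity rather than a mere balance, a Gr\"onwall argument closes it on every finite interval.

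With $\|\omega\|_{L^1_tL^\infty}$ under control, the smoothing effect upgrades $\theta$ to $L^1_{loc}(\RR_+;\mathrm{Lip})$, and I would then propagate the critical norms $v\in B^{1+\frac{2}{p}}_{p,1}$ and $\theta\in B^{-\alpha+1+\frac{2}{p}}_{p,1}\cap L^\infty$ by the logarithmic transport estimate for $v$, the maximal smoothing estimate for the transport--diffusion equation for $\theta$, and the composition estimate for $F(\theta)$, all driven by $\int_0^t\|\nabla v\|_{L^\infty}$ and $\|\theta\|_{L^1_t\mathrm{Lip}}$, which are now finite. Existence then follows from a standard scheme: smooth the data, solve the regularized system, and pass to the limit using these uniform bounds and compactness. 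Uniqueness I would prove by an energy estimate on the difference of two solutions in a space one derivative below the existence class, the only new term $F(\theta_1)-F(\theta_2)$ being handled by the mean value theorem and the boundedness of $F'$ on the common range of the temperatures.
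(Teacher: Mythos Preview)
Your approach is genuinely different from the paper's, and there is a misattribution at the outset: the good-unknown construction you describe is the strategy of Hmidi, Keraani and Rousset \cite{hkr} for the \emph{critical} case $\alpha=1$, not of Hmidi and Zerguine \cite{HZ}. The paper follows \cite{HZ} and never introduces a quantity like your $\Gamma$. Instead it exploits the strict subcriticality $\alpha>1$ directly through two smoothing-effect propositions for the transport--diffusion equation: one controlling $2^{q\alpha}\|\Delta_q\theta\|_{L^1_tL^r}$ by $\|\omega\|_{L^1_tL^r}$, and a sharper one controlling $2^{q\alpha}\|\Delta_q\theta\|_{L^1_tL^\infty}$ by $(q+2)\|\omega\|_{L^1_tL^\infty}$ plus low-frequency terms. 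Summing in $q$ with the weight $2^{q(1-\alpha)}$, the convergent series $\sum_q 2^{q(1-\alpha)}(q+2)$ turns these into a closed Gr\"onwall inequality, first for $\|\omega\|_{L^p}$ and then for $\|\omega\|_{L^\infty}$. The nonlinearity of $F$ enters at this stage only through $\|F'(\theta)\|_{L^\infty}$ (bounded via the maximum principle), and later through a composition theorem in Besov spaces when propagating the critical norms; the extension from linear to nonlinear $F$ is essentially cost-free in the paper's scheme.

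By contrast, your route pays a real price for nonlinear $F$: the chain-rule remainder $R_i=|\DD|^\alpha F_i(\theta)-F_i'(\theta)|\DD|^\alpha\theta$ vanishes identically in the linear setting of \cite{hkr}, and your sketch of how to bound $\partial_j|\DD|^{-\alpha}R_i$ in $L^\infty$ is the weak point. The operator $\partial_j|\DD|^{-\alpha}$ has symbol of size $|\xi|^{1-\alpha}$, which is singular at $\xi=0$ when $\alpha>1$, so an $L^\infty$ bound on $R_i$ alone does not suffice; you need simultaneous $L^p$ control to tame the low frequencies, and the Bony-decomposition argument you allude to must keep track of both while handling a genuinely nonlinear commutator. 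This is plausibly doable with care, but it is substantially more work than the paper's two-step bootstrap, and it is not the mechanism the subcritical theorem actually relies on.
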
 

If we take $\theta=0,$ then the system \eqref{T1} is reduced to the well-known 2D incompressible Euler system. It is well known that this system is globally well-posed in $H^s$ for $s > 2.$ The main argument  is the BKM criterion \cite{BKM} ensuring that the development of finite -time singularities is related to the blow-up of the $L^\infty$ norm of the vorticity and in that case the vorticity is only transported  by the flow. However the global persistence of  critical Besov regularities $v^0\in B^{1+\frac{2}{p}}_{p,1}$ can not be derived from BKM criterion from.  This problem was solved in \cite{vis} by 
Vishik  and his crucial tool is a new  logarithmic estimate which can be formulated as follows:
$$\Vert f\circ g^{-1}\Vert_{B^{0}_{\infty,1}}\le C\big(1+\log(\Vert\nabla g\Vert_{L^{\infty}}\Vert\nabla g^{-1}\Vert_{L^{\infty}})\big)\Vert f \Vert_{B^{0}_{\infty,1}},$$ 
with $f\in B^{0}_{\infty,1}$, $g$ is a $C^1$- diffeomorphism  preserving Lebesgue measure and $C$ some constant depending only on the dimension $d$ (see Theorem 4.2 in \cite{vis} p-209 for the proof). 

\begin{Rema}
 In the above theorem, we can take $F \in \mathcal{C}^{[1+\frac{2}{p}]+2}$ instead of $F \in \mathcal{C}^{5}.$\\
On the other hand if we assume that $F \in \mathcal{C}_{b}^{5}$, then we can replace the assumption $\theta^{0}\in L^{\infty}$ by $\theta^{0}\in L^{r}\;,\;\frac{2}{\alpha-1}<r<\infty.$ 
\end{Rema}
Let us now discuss briefly the difficulties that one has to deal with.  The formulation  vorticity-temperature of the system \eqref{T1} is described by, 

\begin{equation*} 
\left\{\begin{array}{ll} 
\partial_{t}\omega+v\cdot\nabla\omega=\partial_{1}(F_{2}(\theta))- \partial_{2}(F_{1}(\theta)) \\ 
\partial_{t}\theta+v\cdot\nabla\theta+\vert \DD\vert^{\alpha}\theta=0\\
\omega_{|t=0}=\omega^{0}, \quad \theta_{|t=0}=\theta^{0}.  
\end{array} \right. 
\end{equation*} 
Taking the $L^2$-scalar product we get successively  
$$\Vert \omega(t) \Vert_{L^2}\le \Vert \omega^0 \Vert_{L^2}+\Vert\nabla F\Vert_{L^{\infty}} \displaystyle \int^{t}_{0}\Vert \nabla \theta(\tau) \Vert_{L^2}d\tau$$
and 
$$\Vert\theta(t)\Vert^2_{L^2}+\Vert\theta(t)\Vert^2_{L_{t}^{2}\dot{H}^{\frac{\alpha}{2}}}\le\Vert \theta^0\Vert^2_{L^2}.$$
We observe that one can take benefit of these estimates only for $\alpha=2$ in which case   we get a bound for $\omega$ in $L^{\infty}_{loc}(\RR_+,L^2)$ and a bound for $\theta$ in $L_{loc}^{\infty}(\RR_+,L^{2})\cap L^{2}_{loc}(\RR_+,\dot{H}^{1}).$ However for $1<\alpha<2,$ there is no obvious a priori estimates  for the vorticity and  we will use the  idea developed in \cite{HZ}  consisting in the  use   of  the maximal smoothing effect of the transport-diffusion equation.

The plan of the rest of this paper is organized as follows. In section \ref{W} we detail some  basic notions of Littlewood-Paley theory, function spaces and we recall some useful lemmas. We prove in section \ref{W1} some smoothing effects about a transport-diffusion equation which we  need for the proof of our main result. The proof of our main result is given in section \ref{W2}. Finally we prove in an appendix the generalized Bernstein inequality.

\section{SOME DEFINITION AND TECHNICAL TOOLS}\label{W}
In this preliminary section, we are going to recall the so-called Littlewood-Paley operators and give some of their elementary properties. It will be also convenient to introduce some function spaces and review some important lemmas that will be used later.\\ 
We denote by $C$ any positive constant than will change from line to line and $C_{0}$ a real positive constant depending on the size of the initial data. We will use the following notations:\\
$\bullet$ For any positive $A$ and $B$, the notation  $A\lesssim B$ means that there exists a positive constant $C$ such that $A\leqslant CB$.\\
$\bullet$ We denote by $\dot{W}^{1,p}$ with $1\le p\le \infty$ the space of distribution $f$ such that $\nabla f \in L^p$ (see section \ref{W2}).\\
First of all, we define the dyadic decomposition of the full space $\mathbb{R}^{d}$ and recall the Littlewood-Paley operators (see for example \cite{che}).\\
There exists two nonnegative radial functions $\chi\in\mathcal{D}(\mathbb{R}^{2})$ and $\varphi\in\mathcal{D}(\mathbb{R}^{2}\backslash\{ 0\})$ such that 
\begin{enumerate}
\item$\chi(\xi)+ \displaystyle \sum_{q\ge 0}\varphi(2^{-q}\xi)=1, \quad\forall \xi\in\mathbb{R}^{2},$
\item$\displaystyle\sum_{q\in\mathbb{Z}}\varphi(2^{-q}\xi)=1, \quad\forall \xi\in\mathbb{R}^{2}\backslash\{0\},$
\item$\vert p-q\vert\ge 2\Rightarrow\mbox{supp }{\varphi}(2^{-p}\cdot)\cap\mbox{supp }{\varphi}(2^{-q}\cdot)=\varnothing,$
\item$q\ge 1\Rightarrow \mbox{supp }{\chi}\cap\mbox{supp }{\varphi}(2^{-q}\cdot)=\varnothing.$
\end{enumerate}
Let $h=\mathcal{F}^{-1}\varphi$ and $\bar{h}=\mathcal{F}^{-1}\chi,$ the frequency localization operators $\Delta_{q}$ and $S_{q}$ are defined by
\begin{eqnarray*}\label{d}
\Delta_{q}f&=&\varphi(2^{-q}\DD)f= 2^{2\,q}\int_{\RR^{2}}h(2^{q}y)f(x-y)\,dy\;\;\;\textnormal{for}\quad q\geqslant 0,\\
S_{q}f&=&\chi(2^{-q}\DD)f =\displaystyle \sum_{-1\le p\le q-1}\Delta_{p}f= 2^{2\,q} \int_{\RR^{2}}\bar{h}(2^{q}y)f(x-y)\,dy,\\
\Delta_{-1}f&=&S_{0}f ,\qquad \Delta_{q}f=0 \qquad \textnormal{for}\quad q\le-2.
\end{eqnarray*}
It may be easily checked that $$f=\sum_{q\in \ZZ}\Delta_{q}f,\;\;\forall f \in \mathcal{S}^{\prime}(\RR^{2}).$$
Moreover, the Littlewood-Paley decomposition satisfies the property of almost orthogonality:
$$\Delta_{p}\Delta_{q}f=0\qquad \textnormal{if} \qquad \vert p-q \vert \geqslant 2 \qquad$$
$$\Delta_{p}(S_{q-1}\Delta_{q}f)=0 \qquad \textnormal{if} \qquad  \vert p-q \vert \geqslant 5.$$\\
Let us note that the above operators $\Delta_{q}$ and $S_{q}$ map continuously $L^{p}$ into itself uniformly with respect to  $q$ and $p$. 
We will need also the homogeneous operators : 
$$\forall q\in\mathbb{Z}\quad\dot{\Delta}_{q}v=\varphi(2^{-q}\DD)v\quad \textnormal{and} \quad \dot{S}_{q}v= \sum_{ p\le q-1}\dot{\Delta}_{p}v.$$
We notice that $\Delta_{q}= \dot{\Delta}_{q}\;,\forall\;q \in \NN$ and $S_{q}$ coincides with $\dot{S_{q}}$ on tempered distributions modulo polynomials.\\

We now give the way how the product acts on Besov spaces. We shall use the dyadic decomposition.\\
Let us consider two tempered distributions $u$ and $v,$ we write
\begin{equation*}
u= \sum_{q}\Delta_{q}u \qquad \textnormal{and} \qquad v=\sum_{q^{\prime}}\Delta_{q^{\prime}}v
\end{equation*}
\begin{equation*}
uv= \sum_{q,q^{\prime}}\Delta_{q}u \Delta_{q^{\prime}}v.
\end{equation*}
Now, let us introduce Bony's decomposition see \cite{bo}.
\begin{defi}\label{def1}
We denote by $T_{u}v$ the following bilinear operator :
\begin{equation*}
T_u v=\displaystyle \sum_{q}S_{q-1}u\Delta_q v.
\end{equation*}
The remainder of $u$ and $v$ denoted by $R(u,v)$ is given by the following bilinear operator :
\begin{equation*} 
R(u,v)=\displaystyle \sum_{\vert q-q^\prime\vert \leqslant 1}\Delta_qu\Delta_{q^\prime }v.
\end{equation*}
\end{defi}
Just by looking at the definition,it is clear that
\begin{equation*}
uv=T_u v+T_v u+R(u,v).
\end{equation*}
 With the introduction of $\Delta_{q}$, let us recall the definition of Besov space, see \cite{che}.
\begin{defi}\label{def2}
Let $s\in\RR$ and $1 \le p,r \le +\infty.$ The inhomogeneous Besov space $B_{p,r}^s$ is defined by
$$B^{s}_{p,r}=\left\lbrace f \in \mathcal{S}^{\prime}(\RR^{2}) : \Vert f \Vert_{B^{s}_{p,r}}<\infty \right\rbrace .$$
Here
$$\|f\|_{B_{p,r}^s}:=\| 2^{qs} \|\Delta_q f\|_{L^{p}}\|_{\ell ^{r}}.$$
We define also the homogeneous norm 
$$\|f\|_{\dot B_{p,r}^s}:=\Vert (2^{qs}
\|\dot\Delta_q f\|_{L^{p}})_{q}\Vert_{\ell ^{r}(\ZZ)}.$$
\end{defi}
The definition of Besov spaces does not depend on the choice of the dyadic decomposition. 
The two spaces $H^s$ and $B^{s}_{2,2}$ are equal and we have
\begin{equation*}
\dfrac{1}{C^{\vert s \vert+1}}\Vert u \Vert_{B^{s}_{2,2}}\le \Vert u \Vert_{H^{s}}\le C^{\vert s \vert+1}\Vert u \Vert_{B^{s}_{2,2}}.
\end{equation*}

Our study will require the use of the following coupled spaces.
Let $T>0$ and $\rho\geq1,$ we denote by $L^\rho_{T}B_{p,r}^s$ the space of distributions $f$ such that 
$$\|f\|_{L^\rho_{T}B_{p,r}^s}:= \Big\|\Big( (2^{qs} \|\Delta_q f\|_{L^p})_{q}\Big)_{\ell ^{r}}\Big\|_{L^\rho_{T}}<+\infty.$$
Besides the usual mixed space $L^\rho_{T}B_{p,r}^s,$ we also need Chemin-Lerner space $\widetilde L^\rho_{T}{B_{p,r}^s}$ which defined as the set of all distributions $f$ satisfying  
 $$\|f\|_{ \widetilde L^\rho_{T}{B_{p,r}^s}}:= \Vert (2^{qs}
\|\Delta_q f \|_{L^\rho_{T}L^p})_{q}\Vert_{\ell ^{r}}<+\infty .$$\\
The relation between these spaces are detailed in the following lemma, which is a direct consequence of the  Minkowski inequality. 
\begin{lem}\label{lem1}
 Let $ s\in\RR ,\varepsilon>0$ and $(p,r,\rho) \in[1,+\infty]^3.$ Then we have the following embeddings 
$$L^{\rho}_{T}B^{s}_{p,r}\hookrightarrow\widetilde L^{\rho}_{T}B^{s}_{p,r}\hookrightarrow L^{\rho}_{T}B^{s-\varepsilon}_{p,r}\;\;\;\textnormal{if}\quad r\geqslant\rho.$$ 
$${L^\rho_{T}}{B_{p,r}^{s+\varepsilon}}\hookrightarrow\widetilde L^\rho_{T}{B_{p,r}^s}\hookrightarrow L^\rho_{T}B_{p,r}^s\;\;\;\textnormal{if}\quad 
\rho\geq r.$$
\end{lem}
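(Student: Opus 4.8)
The plan is to reduce all four claimed embeddings to scalar inequalities comparing the two ways of interlacing the $\ell^r$ norm (in the dyadic index $q$) with the $L^\rho$ norm (in time). Setting, for a fixed $f$,
$$b_q(t):=2^{qs}\Vert\Delta_q f(t)\Vert_{L^p},\qquad q\ge-1,\ t\in[0,T],$$
Definition \ref{def2} and the definitions of the two scales give $\Vert f\Vert_{L^\rho_T B^s_{p,r}}=\big\Vert\,\Vert(b_q(t))_q\Vert_{\ell^r}\,\big\Vert_{L^\rho_T}$ (the $\ell^r$ sum is taken first, then the time norm) while $\Vert f\Vert_{\widetilde L^\rho_T B^s_{p,r}}=\big\Vert(\Vert b_q\Vert_{L^\rho_T})_q\big\Vert_{\ell^r}$ (time first, then the $\ell^r$ sum). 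Thus the two scales differ only by the order of the norms $\ell^r_q$ and $L^\rho_T$, and the four embeddings are the four comparisons of these orders; two of them hold with no loss, and two require the harmless loss of $\varepsilon$ derivatives encoded by the weight $2^{-q\varepsilon}$.

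The two loss-free embeddings (the first one in the case $r\ge\rho$ and the second one in the case $\rho\ge r$) follow at once from Minkowski's inequality for mixed norms, in the form: for a nonnegative array $a_q(t)$ and two exponents, computing the \emph{smaller} of the two first (innermost) never increases the result. Concretely, when $\rho\le r$ the smaller exponent is $\rho$, so taking $L^\rho_T$ first yields $\Vert f\Vert_{\widetilde L^\rho_T B^s_{p,r}}\le\Vert f\Vert_{L^\rho_T B^s_{p,r}}$, which is $L^\rho_T B^s_{p,r}\hookrightarrow\widetilde L^\rho_T B^s_{p,r}$; when $\rho\ge r$ the smaller exponent is $r$, so taking $\ell^r_q$ first yields $\Vert f\Vert_{L^\rho_T B^s_{p,r}}\le\Vert f\Vert_{\widetilde L^\rho_T B^s_{p,r}}$, which is $\widetilde L^\rho_T B^s_{p,r}\hookrightarrow L^\rho_T B^s_{p,r}$. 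These need only the single application of Minkowski and no regularity loss.

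For the two remaining embeddings the order of the norms is the unfavorable one for Minkowski, and I would trade $\varepsilon$ derivatives for summability. In the case $\rho\ge r$, to prove $L^\rho_T B^{s+\varepsilon}_{p,r}\hookrightarrow\widetilde L^\rho_T B^s_{p,r}$ I set $a_q(t)=2^{q\varepsilon}b_q(t)=2^{q(s+\varepsilon)}\Vert\Delta_q f(t)\Vert_{L^p}$ and use the trivial bound $a_q(t)\le\Vert(a_{q'}(t))_{q'}\Vert_{\ell^r}$ (i.e.\ $\ell^r\hookrightarrow\ell^\infty$); taking the $L^\rho_T$ norm gives $\Vert a_q\Vert_{L^\rho_T}\le\Vert f\Vert_{L^\rho_T B^{s+\varepsilon}_{p,r}}$ uniformly in $q$, after which $\Vert f\Vert_{\widetilde L^\rho_T B^s_{p,r}}=\Vert(2^{-q\varepsilon}\Vert a_q\Vert_{L^\rho_T})_q\Vert_{\ell^r}\le\Vert(2^{-q\varepsilon})_q\Vert_{\ell^r}\,\Vert f\Vert_{L^\rho_T B^{s+\varepsilon}_{p,r}}$, the prefactor being finite since $q\ge-1$. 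In the case $r\ge\rho$, to prove $\widetilde L^\rho_T B^s_{p,r}\hookrightarrow L^\rho_T B^{s-\varepsilon}_{p,r}$ I first pass from $\ell^r$ to $\ell^\rho$ using $\rho\le r$ (so $\Vert\cdot\Vert_{\ell^r}\le\Vert\cdot\Vert_{\ell^\rho}$), which makes both exponents equal to $\rho$ and lets me exchange them by Tonelli, turning $\Vert f\Vert_{L^\rho_T B^{s-\varepsilon}_{p,r}}$ into $\Vert(2^{-q\varepsilon}\Vert b_q\Vert_{L^\rho_T})_q\Vert_{\ell^\rho}$; a final Hölder in $q$ with exponents $r/\rho$ and $(r/\rho)'$ absorbs the weight $2^{-q\varepsilon}$ (whose conjugate power is summable over $q\ge-1$) and recovers $\Vert(\Vert b_q\Vert_{L^\rho_T})_q\Vert_{\ell^r}=\Vert f\Vert_{\widetilde L^\rho_T B^s_{p,r}}$.

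The routine parts are the Fubini exchange and the convergence of the geometric sums; the only point requiring care is the loss-of-$\varepsilon$ bookkeeping, i.e.\ making sure the direction of each inequality ($\ell^r\hookrightarrow\ell^\infty$, $\ell^\rho\hookrightarrow\ell^r$, Hölder, Minkowski) is consistent with the target embedding and that the weight $2^{-q\varepsilon}$ is deployed exactly where the unfavorable ordering of $\ell^r_q$ and $L^\rho_T$ would otherwise block the estimate. Degenerate exponents ($r=\rho$, or $r=\infty$, $\rho=\infty$) are handled by the usual $\ell^\infty/L^\infty$ conventions and pose no additional difficulty.
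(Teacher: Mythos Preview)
Your proof is correct and follows exactly the approach the paper indicates: the paper does not give a detailed proof but simply states that the lemma ``is a direct consequence of the Minkowski inequality,'' and your two loss-free embeddings are precisely that, while your $\varepsilon$-loss embeddings are the standard companion arguments (trading regularity for summability via $\ell^r\hookrightarrow\ell^\infty$ or H\"older in the dyadic index) that complete the picture. There is nothing to add.
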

A further important result that will be constantly used here is the so-called Bernstein inequalities (for the proof see \cite{che} and the references therein) which are detailed below.
\begin{lem}\label{lem2}
There exists a constant $C>0$ such that for every $q\in\ZZ\,,\,k \in \NN$ and for every tempered distriubution $u$ we have  
\begin{eqnarray*}
\sup_{\vert\alpha\vert=k}\Vert\partial^{\alpha}S_{q}u\Vert_{L^{b}}\leqslant C^{k}2^{q\big(k+2\big(\frac{1}{a}-\frac{1}{b}\big)\big)}\Vert S_{q}u\Vert_{L^{a}}\quad \textnormal{for}\quad \; b\geqslant a\geqslant 1\\
C^{-k}2^{qk}\Vert\dot{\Delta}_{q}u\Vert_{L^{a}}\leqslant \sup_{\vert\alpha\vert=k}\Vert\partial^{\alpha}\dot{\Delta}_{q}u\Vert_{L^{a}}\leqslant C^{k}2^{qk}\Vert \dot{\Delta}_{q}u\Vert_{L^{a}}.
\end{eqnarray*}
\end{lem}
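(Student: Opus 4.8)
The plan is to reduce both inequalities to rescaled convolution estimates combined with Young's inequality, exploiting that $S_q u$ has Fourier support in a ball of radius $\sim 2^q$ while $\dot\Delta_q u$ has Fourier support in an annulus $\{|\xi|\sim 2^q\}$; the whole difficulty is to keep the constants of the shape $C^k$ rather than an uncontrolled function of $k$.

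First I would treat the upper bounds (the first inequality and the right-hand side of the second). For $|\alpha|=k$ write $\partial^\alpha S_q u = g_q^\alpha * S_q u$, where $g_q^\alpha = \mathcal F^{-1}\big[(i\xi)^\alpha\tilde\chi(2^{-q}\xi)\big]$ and $\tilde\chi\in\mathcal D(\RR^2)$ equals $1$ on the support of $\chi$. A direct scaling gives $g_q^\alpha(x) = 2^{q(k+2)}(\partial^\alpha\psi)(2^q x)$ with $\psi=\mathcal F^{-1}\tilde\chi$ a fixed Schwartz function, whence $\|g_q^\alpha\|_{L^c}=2^{q(k+2-2/c)}\|\partial^\alpha\psi\|_{L^c}$. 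Choosing $c$ by $1+\tfrac1b=\tfrac1c+\tfrac1a$ and applying Young's inequality $L^c\ast L^a\hookrightarrow L^b$ turns the exponent $2-\tfrac2c$ into $2(\tfrac1a-\tfrac1b)$, which is exactly the claimed power of $2^q$; the homogeneous upper bound is the special case $a=b$, $c=1$. For the annulus one argues identically with $\tilde\chi$ replaced by a cutoff $\tilde\varphi$ supported away from $0$ and equal to $1$ on the support of $\varphi$.

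The one genuine point is the uniform estimate $\|\partial^\alpha\psi\|_{L^c}\le C^k$ for $|\alpha|=k$. I would obtain it from the Fourier side: $\partial^\alpha\psi = \mathcal F^{-1}[(i\xi)^\alpha\tilde\chi]$, and since $\tilde\chi$ is supported in a fixed ball of radius $R$ one has $|(i\xi)^\alpha\tilde\chi|\le R^k|\tilde\chi|$, so $\|\partial^\alpha\psi\|_{L^\infty}\le R^k\|\tilde\chi\|_{L^1}$. To pass to $L^c$ with $c<\infty$ I multiply by $(1+|x|^2)^N$ for $N>1$; this corresponds on the Fourier side to applying $(1-\lap_\xi)^N$ to $(i\xi)^\alpha\tilde\chi$, and each $\xi$-derivative produces at most a factor of order $k$, so that $\|(1-\lap_\xi)^N[(i\xi)^\alpha\tilde\chi]\|_{L^1}\le P(k)R^k$ for a fixed polynomial $P$. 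Absorbing $P(k)$ into a slightly larger geometric constant yields $\|\partial^\alpha\psi\|_{L^c}\le C^k$, which is the crux of keeping the $C^k$ bookkeeping. Alternatively, the homogeneous upper bound can be produced without this computation by iterating the first-order inequality $\|\partial_j\dot\Delta_q u\|_{L^a}\le C2^q\|\dot\Delta_q u\|_{L^a}$ (valid with a universal $C$, because $\partial_j\dot\Delta_q u$ keeps its Fourier support in the same annulus) exactly $k$ times.

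Finally the lower bound of the second inequality, where the annulus is essential. I would use the multinomial identity $|\xi|^{2k}=\big(\sum_{j=1}^2\xi_j^2\big)^k=\sum_{|\alpha|=k}\tfrac{k!}{\alpha!}\xi^{2\alpha}$ together with $\xi^{2\alpha}=(-1)^k(i\xi)^\alpha(i\xi)^\alpha$ and the fact that $\widehat{\dot\Delta_q u}$ is supported where $|\xi|\sim 2^q$ does not vanish, to write $\widehat{\dot\Delta_q u}(\xi)=(-1)^k\sum_{|\alpha|=k}\tfrac{k!}{\alpha!}\,2^{-qk}\Phi_\alpha(2^{-q}\xi)\,\widehat{\partial^\alpha\dot\Delta_q u}(\xi)$, where $\Phi_\alpha(\eta)=\tfrac{(i\eta)^\alpha}{|\eta|^{2k}}\tilde\varphi(\eta)$ is smooth and supported away from the origin. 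Inverting the Fourier transform and using Young's inequality $L^1\ast L^a\hookrightarrow L^a$ gives $2^{qk}\|\dot\Delta_q u\|_{L^a}\le\big(\sum_{|\alpha|=k}\tfrac{k!}{\alpha!}\|\mathcal F^{-1}\Phi_\alpha\|_{L^1}\big)\sup_{|\alpha|=k}\|\partial^\alpha\dot\Delta_q u\|_{L^a}$. Since $\sum_{|\alpha|=k}\tfrac{k!}{\alpha!}=2^k$ and, by the same decay argument as above, $\|\mathcal F^{-1}\Phi_\alpha\|_{L^1}\le C^k$ uniformly in $\alpha$ (here one uses that $|\eta|^{-2k}\le c^{-2k}$ on the support and that each $\eta$-derivative of $|\eta|^{-2k}$ costs only a power of $k$), the bracket is bounded by $C^k$ and the lower estimate follows. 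I expect this last kernel bound — keeping $\|\mathcal F^{-1}\Phi_\alpha\|_{L^1}\le C^k$ despite the $|\xi|^{-2k}$ weight — to be the main technical obstacle.
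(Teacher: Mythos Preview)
The paper does not prove this lemma at all: it is stated as a well-known Bernstein inequality with a reference to Chemin's book \cite{che}, and no argument is given. Your proposal is correct and is precisely the standard proof one finds in that reference --- reduction to a convolution with a rescaled Schwartz kernel, Young's inequality for the $L^a\to L^b$ gain, and for the lower bound the multinomial inversion on the annulus --- so there is nothing to compare; you have simply supplied the proof the paper chose to outsource. Your attention to keeping the constants of the form $C^k$ (via the $(1-\Delta_\xi)^N$ trick and absorption of the resulting polynomial in $k$) is exactly the right bookkeeping and matches how the classical references handle it.
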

Notice that Bernstein inequalities remain true if we change the derivative  $\partial^{\alpha}$ by the fractional derivative $\vert \DD\vert^{\alpha}.$
We can find a proof of the next Proposition in \cite{hk1} which is an extension of \cite{vis}.
\begin{prop}\label{prop1}
Let $(p, r)\in[1,\infty]^2$, $v$ be a divergence free vector-field belonging to the space $L^1_{loc}(\RR_{+};\textnormal{Lip}(\RR^2))$ and let $a$ be a smooth solution of the following transport equation,
\begin{equation*}\left\lbrace
\begin{array}{l}
\partial_t a+v\cdot\nabla a=f\\
{a}_{| t=0}=a^{0}.\\
\end{array}
\right.
\end{equation*}
If the initial data  $a^{0}\in B_{p,r}^0,$ then we have for all $t\in\RR_{+}$
\begin{equation*}
\|a\|_{\widetilde L^\infty_{t}B_{p,r}^0}\lesssim\big(\|a^{0}\|_{B_{p,r}^0}+\|f\|_{\widetilde L^1_{t}B_{p,r}^0}\big)\Big(1+ \int_{0}^{t}\|\nabla v(\tau)\|_{L^\infty}d\tau\Big).
\end{equation*}
\end{prop}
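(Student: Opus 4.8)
The plan is to represent $a$ through the flow of $v$ and reduce the whole statement to one frequency-localized lemma that quantifies how far a Lipschitz, measure-preserving change of variables can move a single dyadic block down in frequency. Since $v\in L^1_{loc}(\RR_+;\textnormal{Lip}(\RR^2))$ is divergence free, its flow $\Psi_{t,\tau}$ (the position at time $t$ of the particle located at $x$ at time $\tau$) is a bi-Lipschitz, Lebesgue-measure-preserving homeomorphism of $\RR^2$ with $\|\nabla\Psi_{t,\tau}^{\pm 1}\|_{L^\infty}\le\exp\big(\int_\tau^t\|\nabla v(s)\|_{L^\infty}\,ds\big)\le e^{V(t)}$, where $V(t):=\int_0^t\|\nabla v(s)\|_{L^\infty}\,ds$; and by uniqueness for the transport equation,
\[
a(t)=a^0\circ\Psi_{t,0}^{-1}+\int_0^t\big(f(\tau)\circ\Psi_{t,\tau}^{-1}\big)\,d\tau .
\]

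The heart of the matter is the following estimate: for every Lebesgue-measure-preserving homeomorphism $\psi$ of $\RR^2$ with $\|\nabla\psi^{\pm 1}\|_{L^\infty}\le L\ge 1$, every $p\in[1,\infty]$, every $g$ and all $q,j\in\ZZ$,
\[
\|\Delta_q(\Delta_j g\circ\psi)\|_{L^p}\le C\,\min\!\big(1,2^{-|q-j|}L\big)\,\|\Delta_j g\|_{L^p}.
\]
For $j\le q$ this comes from Bernstein's inequalities (Lemma~\ref{lem2}), the chain rule and the isometry $\|u\circ\psi\|_{L^p}=\|u\|_{L^p}$, since $\|\Delta_q(\Delta_j g\circ\psi)\|_{L^p}\lesssim 2^{-q}\|\nabla\psi\|_{L^\infty}\|\nabla\Delta_j g\|_{L^p}\lesssim 2^{j-q}L\|\Delta_j g\|_{L^p}$, combined with the trivial bound $\|\Delta_q(\Delta_j g\circ\psi)\|_{L^p}\lesssim\|\Delta_j g\|_{L^p}$. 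For $j\ge q$ I would argue by duality: for $\phi\in L^{p'}$, using the measure-preserving change of variables and an enlarged block $\widetilde\Delta_j$ with $\widetilde\Delta_j\Delta_j=\Delta_j$,
\[
\big\langle\Delta_q(\Delta_j g\circ\psi),\phi\big\rangle=\big\langle\Delta_j g,\ \widetilde\Delta_j\big[(\Delta_q\phi)\circ\psi^{-1}\big]\big\rangle\le\|\Delta_j g\|_{L^p}\,\big\|\widetilde\Delta_j\big[(\Delta_q\phi)\circ\psi^{-1}\big]\big\|_{L^{p'}},
\]
and again Lemma~\ref{lem2}, the chain rule and the isometry give $\|\widetilde\Delta_j[(\Delta_q\phi)\circ\psi^{-1}]\|_{L^{p'}}\lesssim 2^{-j}\|\nabla\psi^{-1}\|_{L^\infty}\|\nabla\Delta_q\phi\|_{L^{p'}}\lesssim 2^{q-j}L\|\phi\|_{L^{p'}}$, which combined with the trivial bound is the claim. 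Summing over $j$ gives, for each $q$, $\|\Delta_q(g\circ\psi)\|_{L^p}\lesssim\big(k_L\ast(\|\Delta_\cdot g\|_{L^p})\big)_q$ with $k_L(m):=\min(1,2^{-|m|}L)$, whose $\ell^1(\ZZ)$-norm is $\lesssim 1+\log L$.

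To conclude, I apply this estimate with $\psi=\Psi_{t,0}^{-1}$ and $\psi=\Psi_{t,\tau}^{-1}$ — for all of which $L=e^{V(t)}$, hence $1+\log L\le 1+V(t)$ — to the two terms of Duhamel's formula; integrating the $f$-term in $\tau$ produces $\|\Delta_j f\|_{L^1_tL^p}$, and since the bound is monotone in $V$ the supremum over $\tau\in[0,t]$ may be taken inside. Taking the $\ell^r(\ZZ)$-norm in $q$ and using Young's inequality for the discrete convolution in the frequency index yields
\[
\|a\|_{\widetilde L^\infty_t B^0_{p,r}}\lesssim\|k_{e^{V(t)}}\|_{\ell^1}\big(\|a^0\|_{B^0_{p,r}}+\|f\|_{\widetilde L^1_t B^0_{p,r}}\big)\lesssim\big(1+V(t)\big)\big(\|a^0\|_{B^0_{p,r}}+\|f\|_{\widetilde L^1_t B^0_{p,r}}\big),
\]
which is the asserted inequality.

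The delicate step is the range $j\ge q$ above: composition with $\psi$ does not preserve frequency supports, so a single high dyadic block can leak into arbitrarily low frequencies, and it is precisely the quantitative control $2^{-(j-q)}L$ of this leakage — hence the linear factor $1+V(t)$ — that makes the endpoint index $s=0$ special, a generic commutator estimate plus Gr\"onwall giving only the exponential bound $e^{CV(t)}$. One also needs some routine care for the low-frequency blocks ($q=-1$ or $j=-1$) and for the convergence of $\sum_j\Delta_q(\Delta_j g\circ\psi)$, which is harmless given the smoothness of $a$. Alternatively, one can avoid the flow altogether by localizing the equation, using the $L^p$ estimate for transport by the divergence-free field $S_{q-1}v$, and controlling the commutator $[\Delta_q,v\cdot\nabla]a$ by Bony's decomposition; the same gain then reappears, reorganized as a convolution inequality in the frequency index instead of a Gr\"onwall loop.
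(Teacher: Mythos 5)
Your proof is correct and follows essentially the same route as the proof the paper relies on: the statement is not proved in the paper but cited from Hmidi--Keraani, whose argument is precisely the Lagrangian/Duhamel representation of $a$ combined with Vishik's logarithmic estimate for composition with a measure-preserving bi-Lipschitz map, and the frequency-interaction bound $\|\Delta_q(\Delta_j g\circ\psi)\|_{L^p}\lesssim\min\big(1,2^{-|q-j|}L\big)\|\Delta_j g\|_{L^p}$ you isolate (Bernstein for $j\le q$, duality for $j\ge q$, then Young's inequality in the frequency index) is exactly the core of that estimate. The points you flag as routine --- the low-frequency blocks and the monotonicity in $V$ needed to pass to the supremum in time --- are indeed routine, so nothing is missing.
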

We now give the following commutator estimate which proved in \cite{hk1}.
\begin{prop}\label{prop2}
Let $u$ be a smooth function and $v$ be a smooth divergence-free vector field of $\RR^{2}$ with vorticity $\omega:=curl v.$ Then we have for all $q \geqslant -1$
\begin{equation*}
\Vert [ \Delta_{q},v\cdot\nabla]u \Vert_{L^{\infty}}\lesssim \Vert u \Vert_{L^{\infty}}\left(\Vert \nabla \Delta_{-1}v  \Vert_{L^{\infty}}+(q+2) \Vert \omega \Vert_{L^{\infty}}\right).
\end{equation*}
\end{prop}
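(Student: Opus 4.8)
The plan is to establish the commutator estimate for $[\Delta_q, v\cdot\nabla]u$ by expanding $v\cdot\nabla u = \operatorname{div}(vu)$ using the divergence-free condition, and then applying Bony's decomposition to split the product $vu$ (or equivalently the commutator) into paraproduct and remainder pieces. First I would write
\begin{equation*}
[\Delta_q, v\cdot\nabla]u = \Delta_q(v\cdot\nabla u) - v\cdot\nabla\Delta_q u
= \Delta_q\operatorname{div}(vu) - \operatorname{div}(v\,\Delta_q u),
\end{equation*}
which lets me move the derivative outside and exploit the $2^q$ gain from $\operatorname{div}\Delta_q$ against the low-frequency truncation implicit in the commutator structure. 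I would then decompose $v = S_{q-1}v + (\mathrm{Id}-S_{q-1})v$ and similarly handle $u$, grouping the terms into: (i) the genuine commutator $[\Delta_q, S_{q-1}v\cdot\nabla]\Delta_q u$-type piece where a first-order Taylor expansion of the kernel $h(2^q\cdot)$ gives a factor $2^{-q}\nabla(S_{q-1}v)$ cancelling the $2^q$ from the derivative; (ii) high-frequency pieces $\Delta_q(R(v,\nabla u))$ and the paraproduct-type term $\Delta_q(T_{\nabla u}v)$ which only involve blocks $\Delta_{q'}v$, $\Delta_{q'}u$ with $q'\gtrsim q$; and (iii) the symmetric low-high term $\Delta_q(T_v\nabla u)$ minus its counterpart, again controlled by the genuine commutator mechanism.

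For the genuine commutator term, I would use the standard convolution representation $\Delta_q f(x) = 2^{2q}\int h(2^q y) f(x-y)\,dy$ together with the mean-value theorem to write the term as an integral against $2^{2q}\,y\cdot\nabla(S_{q-1}v)(x-\tau y)\,h(2^q y)\,\Delta_q u(x-y)$, and then bound it in $L^\infty$ by $\|y\,h(2^q y)\|_{L^1}\,2^q$-type quantities — here $\|\nabla S_{q-1}v\|_{L^\infty}$ appears. The point is to split $\nabla S_{q-1} v = \nabla\Delta_{-1}v + \sum_{0\le p\le q-2}\nabla\Delta_p v$ and use the Bernstein inequality (Lemma \ref{lem2}) to bound each $\|\nabla\Delta_p v\|_{L^\infty}\lesssim 2^p\|\Delta_p v\|_{L^\infty}$; invoking $\|\Delta_p v\|_{L^\infty}\lesssim 2^{-p}\|\Delta_p\omega\|_{L^\infty}\lesssim 2^{-p}\|\omega\|_{L^\infty}$ (Biot--Savart, i.e. $v = \nabla^\perp\Delta^{-1}\omega$ so $\nabla\Delta_p v$ is a zero-order operator applied to $\Delta_p\omega$ for $p\ge0$) yields a sum of $(q+1)$ terms each $\lesssim\|\omega\|_{L^\infty}$, producing the $(q+2)\|\omega\|_{L^\infty}$ factor, with the separate $\|\nabla\Delta_{-1}v\|_{L^\infty}$ contribution accounting for the low frequency where Biot--Savart does not directly give decay.

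For the high-frequency remainder and paraproduct pieces, I would use that $\Delta_q$ restricts the sum to $q'\ge q-N$ for a fixed $N$, bound $\|\Delta_q(\ldots)\|_{L^\infty}$ by $2^q\sum_{q'\ge q-N}\|\Delta_{q'}v\|_{L^\infty}\|\Delta_{q'}u\|_{L^\infty}$ (after the divergence is distributed), and again use $\|\Delta_{q'}v\|_{L^\infty}\lesssim 2^{-q'}\|\omega\|_{L^\infty}$ to absorb the $2^q$ and produce a convergent geometric sum times $\|u\|_{L^\infty}\|\omega\|_{L^\infty}$. The main obstacle — and the only genuinely delicate point — is the logarithmic loss: one must be careful that the low-frequency portion of $\nabla S_{q-1}v$ really does generate exactly a linear-in-$q$ factor and not worse, which forces the splitting into the $\Delta_{-1}$ piece (handled by its own norm $\|\nabla\Delta_{-1}v\|_{L^\infty}$, since there is no frequency decay there) plus the dyadic blocks $p\ge0$ where the Biot--Savart decay $2^{-p}$ exactly compensates the Bernstein factor $2^p$, leaving a harmless sum of $q$ equal terms. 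Keeping the constants uniform in $q\ge-1$ and checking the $q=-1$ and small-$q$ cases separately completes the argument.
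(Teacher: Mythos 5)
The paper does not actually prove this proposition; it simply quotes it from \cite{hk1}, so there is no in-paper argument to compare against. Your outline is, in substance, the standard proof from that reference: use $\textnormal{div}\,v=0$ to write the commutator in divergence form, apply Bony's decomposition, treat the paraproduct piece $[\Delta_q,S_{j-1}v\cdot\nabla]\Delta_j u$ (for $|j-q|\le 4$) by a first-order Taylor expansion of the convolution kernel, split $\nabla S_{j-1}v=\nabla\Delta_{-1}v+\sum_{p=0}^{j-2}\nabla\Delta_p v$ and use $\Vert\nabla\Delta_p v\Vert_{L^\infty}\approx\Vert\Delta_p\omega\Vert_{L^\infty}$ for $p\ge 0$ to produce the $(q+2)\Vert\omega\Vert_{L^\infty}$ factor, and sum geometrically over the high-frequency terms. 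One point to be careful about when you write out the details: in the remainder piece $\Delta_q\,\textnormal{div}\,R(v,u)$ your recipe ``use $\Vert\Delta_{q'}v\Vert_{L^\infty}\lesssim 2^{-q'}\Vert\omega\Vert_{L^\infty}$'' fails for the block $q'=-1$ (that equivalence holds only for $q'\ge 0$), and $\Vert\Delta_{-1}v\Vert_{L^\infty}$ does not appear on the right-hand side of the claimed estimate, so it cannot be left over; this block, which contributes only for $q$ below a fixed threshold, must be recombined with its counterpart coming from $v\cdot\nabla\Delta_q u$ so that the genuine commutator structure applies and yields $\Vert\nabla\Delta_{-1}v\Vert_{L^\infty}\Vert u\Vert_{L^\infty}$ rather than $\Vert\Delta_{-1}v\Vert_{L^\infty}\Vert u\Vert_{L^\infty}$. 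You allude to treating small $q$ separately, so this is a matter of execution rather than a wrong idea, but it is the one place where the argument as literally stated would not close.
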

We recall now the following result of propagation of Besov regularities which is discussed in \cite{che}.
\begin{prop}\label{prop3}
Let $v$ be a solution of the incompressible Euler system,  
$$\partial_t v+v\cdot\nabla v+\nabla p=f,\quad v_{|t=0}=v^0,\quad \textnormal{div}\, v=0.$$
Then for $s>-1, (p,r)\in]1,\infty[\times[1,\infty]$ we have
$$\|v(t)\|_{B_{p,r}^s}\leqslant C e^{CV(t)}\Big(\|v^0\|_{B_{p,r}^s}+\int_0^te^{-CV(\tau)}\|f(\tau)\|_{B_{p,r}^s}d\tau\Big),$$
with $V(t)=\|\nabla v\|_{L^1_tL^\infty}.$
\end{prop}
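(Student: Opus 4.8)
The plan is to run a frequency-localized $L^p$ energy estimate on the momentum equation, after eliminating the pressure with the Leray projector, and then close the resulting linear inequality by Gronwall's lemma. First I would put the equation in transport form with an explicit forcing. Writing $\mathbb{P}$ for the Leray projector and $I-\mathbb{P}=\nabla\Delta^{-1}\textnormal{div}$, and using $\textnormal{div}\,v=0$ (so $\partial_t v$ and $v\cdot\nabla\Delta_q v$ stay divergence free), the pressure is $\nabla p=(I-\mathbb{P})f-(I-\mathbb{P})(v\cdot\nabla v)$, hence
$$\partial_t v+v\cdot\nabla v=\mathbb{P}f+(I-\mathbb{P})(v\cdot\nabla v)=:g.$$
Applying $\Delta_q$ and inserting the commutator, $\Delta_q(v\cdot\nabla v)=v\cdot\nabla\Delta_q v+[\Delta_q,v\cdot\nabla]v$, I get
$$\partial_t\Delta_q v+v\cdot\nabla\Delta_q v=\Delta_q g-[\Delta_q,v\cdot\nabla]v.$$

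Next comes the $L^p$ step. Multiplying by $|\Delta_q v|^{p-2}\Delta_q v$ and integrating, the transport term drops by the divergence-free condition, so for each $q$
$$\|\Delta_q v(t)\|_{L^p}\le\|\Delta_q v^0\|_{L^p}+\int_0^t\Big(\|[\Delta_q,v\cdot\nabla]v\|_{L^p}+\|\Delta_q\mathbb{P}f\|_{L^p}+\|\Delta_q(I-\mathbb{P})(v\cdot\nabla v)\|_{L^p}\Big)d\tau.$$
I then multiply by $2^{qs}$, take the $\ell^r$ norm in $q$, and use Minkowski's inequality to pull the $\ell^r$ norm inside the time integral.

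The two structural estimates are the heart of the matter. For the convection term, a Bony decomposition of $[\Delta_q,v\cdot\nabla]v$ together with the standard paraproduct and remainder bounds (a mean value argument on the kernel of $\Delta_q$ for the paraproduct pieces) yields
$$\big\|2^{qs}\|[\Delta_q,v\cdot\nabla]v\|_{L^p}\big\|_{\ell^r}\lesssim\|\nabla v\|_{L^\infty}\|v\|_{B_{p,r}^s},$$
and convergence of the low-frequency remainder sum is exactly what forces $s>-1$. For the pressure, the key is the divergence structure $\textnormal{div}(v\cdot\nabla v)=\partial_i v_j\,\partial_j v_i$, so $(I-\mathbb{P})(v\cdot\nabla v)=\nabla\Delta^{-1}(\partial_i v_j\,\partial_j v_i)$ is an order $-1$ operator applied to a genuine product: the product law gives $\|\partial_i v_j\,\partial_j v_i\|_{B_{p,r}^{s-1}}\lesssim\|\nabla v\|_{L^\infty}\|v\|_{B_{p,r}^s}$, while on the nonzero frequencies $\nabla\Delta^{-1}$ recovers the missing derivative, and the low-frequency block is absorbed by viewing $\nabla\Delta^{-1}\textnormal{div}$ as a homogeneous-degree-zero Calderón–Zygmund operator, bounded on $L^p$ precisely for $1<p<\infty$ (which is why that range is imposed). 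Together with $\|\mathbb{P}f\|_{B_{p,r}^s}\lesssim\|f\|_{B_{p,r}^s}$ this gives the analogous $\ell^r$ bound for the pressure term.

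Assembling the three pieces yields
$$\|v(t)\|_{B_{p,r}^s}\le\|v^0\|_{B_{p,r}^s}+C\int_0^t\|\nabla v(\tau)\|_{L^\infty}\|v(\tau)\|_{B_{p,r}^s}\,d\tau+C\int_0^t\|f(\tau)\|_{B_{p,r}^s}\,d\tau,$$
and Gronwall's lemma with $A(t)=C\int_0^t\|\nabla v(\tau)\|_{L^\infty}d\tau=CV(t)$ gives $\|v(t)\|_{B_{p,r}^s}\le e^{CV(t)}\big(\|v^0\|_{B_{p,r}^s}+C\int_0^t e^{-CV(\tau)}\|f(\tau)\|_{B_{p,r}^s}\,d\tau\big)$, which is the claim. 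I expect the main obstacle to be the pressure term: one must exploit the divergence and Calderón–Zygmund structure to obtain a bound that is \emph{linear} in $\|v\|_{B_{p,r}^s}$, carrying a single factor $\|\nabla v\|_{L^\infty}$, rather than the naive quadratic bound $\|v\|_{B_{p,r}^s}^2$ — only a linear inequality can be closed by Gronwall — with the low-frequency endpoint (requiring $1<p<\infty$) and the commutator range $s>-1$ as the accompanying technical points.
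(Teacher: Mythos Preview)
The paper does not prove this proposition: it is quoted from Chemin's monograph \cite{che} without argument, so there is nothing to compare against directly. Your outline is precisely the standard route taken there --- localize, run the $L^p$ estimate on each block, control the commutator by Bony's decomposition, handle the pressure via the Calder\'on--Zygmund structure of $\nabla\Delta^{-1}\textnormal{div}$, and close by Gronwall --- so in that sense the proposal matches the cited source.

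One technical point deserves sharpening. The product estimate you invoke,
\[
\|\partial_i v_j\,\partial_j v_i\|_{B_{p,r}^{s-1}}\lesssim\|\nabla v\|_{L^\infty}\|v\|_{B_{p,r}^s},
\]
follows from the usual paraproduct/remainder bounds only when $s-1>0$: the remainder $R(\nabla v,\nabla v)$ maps $B^{0}_{\infty,\infty}\times B^{s-1}_{p,r}$ into $B^{s-1}_{p,r}$ under the condition $0+(s-1)>0$. To reach the full range $s>-1$ you must exploit $\textnormal{div}\,v=0$ once more and rewrite the remainder as $\sum_{i,j}\partial_i\partial_j R(v_i,v_j)$; then, using $\|\Delta_k v\|_{L^\infty}\lesssim 2^{-k}\|\nabla v\|_{L^\infty}$ for $k\ge 0$, the summability condition becomes exactly $s+1>0$. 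This is the same mechanism that makes $s>-1$ appear in the commutator, so the two constraints are not independent. With that adjustment your sketch is complete.
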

To finish this paragraph we need the following theorem which give the action of smooth functions on the Besov spaces $B^{s}_{p,r},$ (see \cite{AG} for the proof).
\begin{Theo}\label{theo2}
Let $F\in \mathcal{C}^{[s]+2}\;,\;s$ a positive real number and $F$ vanishing at $0.$ If $u$ belongs to $B^{s}_{p,r}\cap L^{\infty}$, with $(p,r) \in [1,+\infty]^{2},$  then $F \circ u$ belongs to $B^{s}_{p,r}$ and we have
\begin{equation*} 
\Vert F \circ u \Vert_{B^{s}_{p,r}}\leqslant C_{s}\displaystyle \sup_{\vert x \vert \le C \Vert u \Vert_{L^{\infty}}}\Vert F^{[s]+2}(x)\Vert_{L^{\infty}} \Vert u \Vert_{B^{s}_{p,r}}.
\end{equation*}
\end{Theo}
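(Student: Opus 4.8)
The plan is to establish the estimate by a telescoping (Meyer-type) decomposition of $F\circ u$ along the low-frequency cut-offs, combined with a two-sided spectral splitting and the Bernstein inequalities of Lemma~\ref{lem2}. Since $S_{-1}u=0$ and $F(0)=0$, the series $F(u)=\sum_{q\ge -1}\big(F(S_{q+1}u)-F(S_q u)\big)$ telescopes, and because $S_{q+1}u-S_q u=\Delta_q u$ the mean value theorem gives $v_q:=F(S_{q+1}u)-F(S_q u)=\Delta_q u\cdot g_q$ with $g_q=\int_0^1 F'(S_qu+t\Delta_q u)\,dt$. The first thing to record is that, since $S_q$ and $\Delta_q$ map $L^\infty$ into itself uniformly in $q$, the argument $S_qu+t\Delta_q u$ stays in a fixed ball of radius $C\|u\|_{L^\infty}$; hence $\|g_q\|_{L^\infty}\le \sup_{|y|\le C\|u\|_{L^\infty}}|F'(y)|$ and $\|v_q\|_{L^p}\lesssim\|\Delta_q u\|_{L^p}$, uniformly in $q$. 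I would first justify that this decomposition converges in $\mathcal S'(\RR^2)$ (and in $L^p_{loc}$), using $F(0)=0$ together with $S_qu\to u$ in $L^\infty$.

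The essential point --- and the reason the higher regularity $F\in\mathcal C^{[s]+2}$ is forced --- is that $g_q$ is a composed function of band-limited data and is therefore \emph{not} spectrally localized; one cannot simply restrict the sum $\sum_q\Delta_j v_q$ to $q\gtrsim j$. To bypass this I would prove a derivative estimate with $N:=[s]+1$, namely
\[
\|\nabla^N v_q\|_{L^p}\lesssim C(\|u\|_{L^\infty})\,2^{qN}\,\|\Delta_q u\|_{L^p},
\]
obtained by distributing the $N$ derivatives via the Leibniz rule between $\Delta_q u$ and $g_q$, expanding $\nabla^{m}g_q$ through the Fa\`a di Bruno formula, and bounding each factor $\nabla^{j_i}(S_qu+t\Delta_q u)$ in $L^\infty$ by $2^{qj_i}\|u\|_{L^\infty}$ via the Bernstein inequalities of Lemma~\ref{lem2}. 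The worst term, where all $N$ derivatives fall on $g_q$, produces $F^{(N+1)}=F^{([s]+2)}$ evaluated on $\{|y|\le C\|u\|_{L^\infty}\}$, which is precisely the top derivative appearing in the statement; the lower-order terms involve $F^{(k)}$ with $k\le[s]+2$ and are controlled identically.

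With these two bounds I would split $\Delta_j F(u)=\sum_{q\ge j}\Delta_j v_q+\sum_{q<j}\Delta_j v_q$. For the high-frequency range $q\ge j$ I use $\|\Delta_j v_q\|_{L^p}\lesssim\|v_q\|_{L^p}\lesssim\|\Delta_q u\|_{L^p}$, so that $2^{js}\|\Delta_j v_q\|_{L^p}\lesssim 2^{(j-q)s}\,2^{qs}\|\Delta_q u\|_{L^p}$ and $s>0$ gives an exponentially decaying kernel. For the low-frequency range $q<j$ (with $j\ge0$, so $\Delta_j=\dot\Delta_j$) I use Bernstein to gain negative powers,
\[
\|\Delta_j v_q\|_{L^p}\lesssim 2^{-jN}\|\nabla^N v_q\|_{L^p}\lesssim 2^{-jN}2^{qN}\|\Delta_q u\|_{L^p},
\]
whence $2^{js}\|\Delta_j v_q\|_{L^p}\lesssim 2^{(j-q)(s-N)}\,2^{qs}\|\Delta_q u\|_{L^p}$; since $N=[s]+1>s$ this kernel decays for $j>q$. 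In both regimes the bound is a discrete convolution of $(2^{qs}\|\Delta_q u\|_{L^p})_q\in\ell^r$ with a fixed $\ell^1$ sequence, so Young's inequality for series yields the claimed $\|F\circ u\|_{B^s_{p,r}}\lesssim C_s\,\sup_{|y|\le C\|u\|_{L^\infty}}|F^{([s]+2)}(y)|\,\|u\|_{B^s_{p,r}}$, the top derivative being the one that dictates the constant.

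The main obstacle is exactly the absence of spectral localization of the composed factor $g_q$: the proof hinges on replacing the (false) frequency-support restriction by the quantitative balance between the Bernstein gain $2^{-jN}$ and the Fa\`a di Bruno growth $2^{qN}$, and it is this balance that forces $N=[s]+1$ and hence the regularity $F\in\mathcal C^{[s]+2}$ and the appearance of $F^{([s]+2)}$. A secondary technical point to watch is the treatment of the lowest frequencies ($j=-1$, and the base term of the telescoping), where $\Delta_j\ne\dot\Delta_j$ and the Bernstein gain must be argued separately, using the $L^p$-boundedness of $\Delta_{-1}$ and the direct $L^p$ bound on $v_q$ for the finitely many remaining terms.
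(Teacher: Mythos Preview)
The paper does not prove this statement itself but quotes it from \cite{AG}. Your Meyer-type telescoping argument---writing $F(u)=\sum_{q\ge-1}\bigl(F(S_{q+1}u)-F(S_qu)\bigr)$, establishing via Leibniz and Fa\`a di Bruno the key bound $\|\nabla^{[s]+1} v_q\|_{L^p}\lesssim 2^{q([s]+1)}\|\Delta_q u\|_{L^p}$, then splitting $\Delta_j F(u)$ into $q\ge j$ and $q<j$ and closing with Bernstein and a discrete Young inequality---is exactly the classical proof found in \cite{AG}, and it is correct.
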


\section{AROUND A TRANSPORT-DIFFUSION EQUATION}\label{W1}

In this section, we will give some useful estimates for any smooth solution of linear transport-diffusion model given by
\begin{equation}\label{T2}
\left\{ \begin{array}{ll} 
\partial_{t}\theta+v\cdot\nabla\theta+\vert {\DD}\vert^{\alpha}\theta=f\\
\theta_{| t=0}=\theta^{0}.
\end{array} \right. 
\end{equation} 
We will discuss two kinds of estimates : $L^{p}$ estimates and smoothing effects.\\
The proof of the following $L^{p}$ estimates can be found in \cite{C-C}.
\begin{lem}\label{lem3} Let $v$ be a smooth divergence free vector-field of $\RR^{2}$ and $\theta$ be a smooth solution of the equation \eqref{T2}. Then 
for every $p\in[1, \infty]$ 
\begin{equation*}
\Vert\theta(t)\Vert_{L^p}\leqslant\Vert\theta^0\Vert_{L^p}+\int_{0}^{t}\Vert f(\tau)\Vert_{L^p}d\tau.
\end{equation*}
\end{lem}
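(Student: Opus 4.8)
The plan is to run an $L^p$ energy estimate, using the divergence-free condition to eliminate the transport term and the positivity of the fractional dissipation to discard the diffusion term. Fix first $p\in(1,\infty)$. Multiplying \eqref{T2} by $|\theta|^{p-2}\theta$ and integrating over $\RR^2$, the time-derivative term produces $\frac1p\frac{d}{dt}\|\theta(t)\|_{L^p}^p$. For the convection term, since $\textnormal{div}\,v=0$ and $|\theta|^{p-2}\theta\,\nabla\theta=\frac1p\nabla(|\theta|^p)$,
\[
\int_{\RR^2}(v\cdot\nabla\theta)\,|\theta|^{p-2}\theta\,dx=\frac1p\int_{\RR^2}v\cdot\nabla\big(|\theta|^p\big)\,dx=-\frac1p\int_{\RR^2}(\textnormal{div}\,v)\,|\theta|^p\,dx=0,
\]
while the source term is handled by H\"older's inequality: $\big|\int_{\RR^2} f\,|\theta|^{p-2}\theta\,dx\big|\le\|f(t)\|_{L^p}\|\theta(t)\|_{L^p}^{p-1}$.

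The heart of the matter is the sign of the dissipative contribution $\int_{\RR^2}\big(|\DD|^\alpha\theta\big)\,|\theta|^{p-2}\theta\,dx$. I would establish the pointwise C\'ordoba--C\'ordoba-type inequality $\Phi'(\theta)\,|\DD|^\alpha\theta\ge|\DD|^\alpha\big(\Phi(\theta)\big)$, valid for $0\le\alpha\le2$ and any convex $\Phi$ with $\Phi(0)=0$, starting from the singular-integral representation
\[
|\DD|^\alpha\theta(x)=c_\alpha\,\textnormal{P.V.}\!\int_{\RR^2}\frac{\theta(x)-\theta(y)}{|x-y|^{2+\alpha}}\,dy,\qquad c_\alpha>0,
\]
combined with the convexity inequality $\Phi'(a)(a-b)\ge\Phi(a)-\Phi(b)$ applied inside the integral. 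Choosing $\Phi(s)=|s|^p/p$ and integrating over $\RR^2$ gives $\int_{\RR^2}|\theta|^{p-2}\theta\,|\DD|^\alpha\theta\,dx\ge\int_{\RR^2}|\DD|^\alpha\big(|\theta|^p/p\big)\,dx=0$, the last equality because $\widehat{|\DD|^\alpha g}(0)=0$ (equivalently, by antisymmetrizing the singular-integral representation of $|\DD|^\alpha(|\theta|^p)$ in $x$ and $y$). Collecting the three computations yields $\frac1p\frac{d}{dt}\|\theta(t)\|_{L^p}^p\le\|f(t)\|_{L^p}\|\theta(t)\|_{L^p}^{p-1}$, hence $\frac{d}{dt}\|\theta(t)\|_{L^p}\le\|f(t)\|_{L^p}$, and integration in time gives the stated estimate for $p\in(1,\infty)$.

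For the endpoint $p=\infty$ I would invoke the maximum principle: at a point $x_t$ where $|\theta(\cdot,t)|$ attains its maximum one has $\nabla\theta(x_t,t)=0$, so the transport term drops out, and $|\DD|^\alpha\theta(x_t,t)$ has the favorable sign (directly from the singular-integral formula), whence $\frac{d}{dt}\|\theta(t)\|_{L^\infty}\le\|f(t)\|_{L^\infty}$ in the sense of one-sided derivatives; a standard mollification argument makes this rigorous. The case $p=1$ follows by letting $p\downarrow1$ in the estimate just obtained (or by testing against a regularized $\textnormal{sgn}(\theta)$). Since all of these manipulations are classical and the statement is quoted from \cite{C-C}, the only genuine technical point is the positivity of the fractional-diffusion term, which I expect to be the main — though mild — obstacle; everything else is routine.
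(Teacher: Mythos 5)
Your argument is correct and coincides with the proof the paper relies on: the paper gives no proof of Lemma \ref{lem3} itself but cites \cite{C-C}, and your $L^p$ energy estimate hinging on the pointwise convexity inequality $\Phi'(\theta)\,\vert\DD\vert^{\alpha}\theta\ge\vert\DD\vert^{\alpha}(\Phi(\theta))$ with $\Phi(s)=\vert s\vert^p/p$ is precisely the C\'ordoba--C\'ordoba argument, with the standard maximum-principle and limiting treatments of the endpoints $p=\infty$ and $p=1$. Nothing further is needed.
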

We give now the following smoothing effects which is proved in \cite{HZ}.
\begin{prop}\label{prop4} Let $p\in[1, \infty], s>-1$ and $v$ be a smooth divergence free vector-field of $\RR^2$. Let $\theta$ be a smooth solution of \eqref{T2}, then
\begin{equation*}
 \|\theta\|_{\widetilde L^\infty_{t}B_{p,1}^s}+\|\theta\|_{ L^1_{t}B_{p,1}^{s+\alpha}}\leqslant Ce^{CV(t)}\Big(\|\theta^0\|_{B_{p,1}^s}(1+t)+\|f\|_{L_{t}^{1}B^{s}_{p,1}}+\int_{0}^t\Gamma_s(\tau)d\tau\Big),
\end{equation*}
with,
\begin{equation*} 
V(t)\overset{def}{=}\displaystyle \int_{0}^t\|\nabla v(\tau)\|_{L^\infty}d\tau,\quad\Gamma_s(t)\overset{def}{=} 
\|\nabla\theta(t)\|_{L^\infty}\|v(t)\|_{B_{p,1}^s}1_{[1,\infty[}(s).
\end{equation*} 
\end{prop}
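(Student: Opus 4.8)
The strategy is a frequency-by-frequency energy estimate that exploits the parabolic smoothing of $\vert\DD\vert^{\alpha}$ on each dyadic block. Applying $\Delta_q$ to \eqref{T2} and setting $\theta_q:=\Delta_q\theta$, one gets
$$\partial_t\theta_q+v\cdot\nabla\theta_q+\vert\DD\vert^{\alpha}\theta_q=\Delta_q f+R_q,\qquad R_q:=[v\cdot\nabla,\Delta_q]\theta .$$
For $q\ge 0$ I would multiply by $\vert\theta_q\vert^{p-2}\theta_q$ and integrate: the transport term drops out because $\textnormal{div}\,v=0$, and the generalized Bernstein inequality of the appendix gives the dissipative lower bound $\int\vert\DD\vert^{\alpha}\theta_q\,\vert\theta_q\vert^{p-2}\theta_q\,dx\ge c\,2^{\alpha q}\Vert\theta_q\Vert_{L^p}^{p}$, the endpoint $p=\infty$ being handled by evaluating at a point where $\vert\theta_q(t,\cdot)\vert$ is maximal. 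This yields
$$\tfrac{d}{dt}\Vert\theta_q(t)\Vert_{L^p}+c\,2^{\alpha q}\Vert\theta_q(t)\Vert_{L^p}\le\Vert\Delta_q f(t)\Vert_{L^p}+\Vert R_q(t)\Vert_{L^p},$$
hence, by Duhamel,
$$\Vert\theta_q(t)\Vert_{L^p}\le e^{-ct2^{\alpha q}}\Vert\Delta_q\theta^0\Vert_{L^p}+\int_0^t e^{-c(t-\tau)2^{\alpha q}}\big(\Vert\Delta_q f(\tau)\Vert_{L^p}+\Vert R_q(\tau)\Vert_{L^p}\big)\,d\tau .$$
On the lowest block $q=-1$ there is no gain, so one only keeps the transport bound $\Vert\theta_{-1}(t)\Vert_{L^p}\le\Vert\Delta_{-1}\theta^0\Vert_{L^p}+\int_0^t(\cdots)\,d\tau$; after integration in time this is what forces the factor $(1+t)$ in front of $\Vert\theta^0\Vert_{B^s_{p,1}}$.

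Next I would reconstruct the two norms. Multiplying the Duhamel bound by $2^{qs}$, taking $\sup_{[0,t]}$ and the $\ell^1_q$ norm controls $\Vert\theta\Vert_{\widetilde L^\infty_t B^s_{p,1}}$; multiplying instead by $2^{q(s+\alpha)}$ and integrating in $t$ controls $\Vert\theta\Vert_{L^1_t B^{s+\alpha}_{p,1}}$. The arithmetic point is that $2^{\alpha q}\int_0^t e^{-c(t-\tau)2^{\alpha q}}\,d\tau\le c^{-1}$ uniformly in $q\ge 0$, so the $\alpha$ extra derivatives are paid for exactly by the dissipation, while the time convolutions of the forcing and commutator are absorbed by Young's inequality in $t$; the $\Vert\Delta_q f\Vert_{L^p}$ contributions assemble into $\Vert f\Vert_{L^1_t B^s_{p,1}}$.

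The remaining input is the commutator. Using Bony's decomposition to split $R_q$ into its diagonal paraproduct part and its low--high and remainder parts, and using $\textnormal{div}\,v=0$, one obtains an estimate of the shape
$$2^{qs}\Vert R_q(t)\Vert_{L^p}\le C\,c_q(t)\big(\Vert\nabla v(t)\Vert_{L^\infty}\Vert\theta(t)\Vert_{B^s_{p,1}}+\Gamma_s(t)\big),$$
with $\big(c_q(t)\big)_q$ of $\ell^1$-norm $\le1$, where the extra term $\Gamma_s=\Vert\nabla\theta\Vert_{L^\infty}\Vert v\Vert_{B^s_{p,1}}1_{[1,\infty[}(s)$ is needed only when $s\ge1$: for $-1<s<1$ the dangerous low--high interactions are $\ell^1$-summable using $\Vert\nabla v\Vert_{L^\infty}$ alone, which is exactly why the indicator function appears. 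Inserting this bound, the surviving term $\int_0^t\Vert\nabla v(\tau)\Vert_{L^\infty}\Vert\theta\Vert_{\widetilde L^\infty_\tau B^s_{p,1}}\,d\tau$ is closed by Gronwall's lemma, which produces the prefactor $e^{CV(t)}$ and completes the estimate.

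I expect the two genuine obstacles to be: (i) the generalized Bernstein inequality for $\vert\DD\vert^{\alpha}$ uniformly over $p\in[1,\infty]$ — this is precisely why it is relegated to the appendix, with the $p=\infty$ endpoint needing a separate pointwise/maximum-principle argument; and (ii) making the commutator estimate sharp enough that for $-1<s<1$ no derivative falls on $\theta$, so that $\Gamma_s$ genuinely vanishes there. The rest — Duhamel, the uniform bound on $2^{\alpha q}\int_0^t e^{-c(t-\tau)2^{\alpha q}}\,d\tau$, Young's inequality in time, and the final Gronwall — is routine but must be tracked carefully through the Chemin--Lerner spaces.
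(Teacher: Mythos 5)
Note first that the paper does not prove Proposition \ref{prop4} at all: it is quoted from \cite{HZ}, so there is no in-text argument to compare against. Your outline is, in substance, the proof given there — frequency localization, the $L^p$ energy estimate with the generalized Bernstein inequality supplying the coercive term $c\,2^{q\alpha}\Vert\theta_q\Vert_{L^p}^p$, Duhamel and Young in time to trade the dissipation for $\alpha$ derivatives in $L^1_t$, the separate treatment of the block $q=-1$ producing the $(1+t)$, the Bony-type commutator estimate whose low--high piece is only summable for $s<1$ (hence the indicator in $\Gamma_s$), and Gronwall giving $e^{CV(t)}$. So the structure is right and correctly identifies where each term in the stated bound comes from.

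One step as written would not go through: the endpoint $p=\infty$. Evaluating at a point where $\vert\theta_q(t,\cdot)\vert$ attains its maximum only gives $(\vert\DD\vert^{\alpha}\theta_q)(x_0)\,\mathrm{sign}\,\theta_q(x_0)\geq 0$, i.e.\ nonnegativity of the dissipative term and hence a maximum principle, but not the quantitative lower bound $c\,2^{q\alpha}\Vert\theta_q\Vert_{L^\infty}$ that your differential inequality requires. Nor can you simply let $p\to\infty$ in the finite-$p$ estimate, since the constant in the generalized Bernstein inequality (cf.\ Corollary \ref{coro}, where it is $4(p-1)/p^2$) degenerates at both endpoints $p=1$ and $p=\infty$; this degeneracy is precisely why the paper proves the $L^\infty$ smoothing effect of Proposition \ref{prop6} by an entirely different route (Lagrangian coordinates plus the semigroup decay of Lemma \ref{lem5}) rather than by the energy method. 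To cover $p\in\{1,\infty\}$ in Proposition \ref{prop4} you would need either such a semigroup/flow argument on each block or a version of the lower bound valid uniformly at the endpoints; as it stands your argument proves the proposition only for $1<p<\infty$.
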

We intend to prove the two following smoothing effects. One is detailed below.
\begin{prop}\label{prop5}
Let $v$ be a smooth divergence free vector-field of $\RR^2$ with vorticity $\omega$ and $\theta$ be a smooth solution of \eqref{T2}. Then for every $r\in[2, \infty[,$ there exists a constant $C$ such that for every $\rho\geq1\;,\;q\in\NN$ and $t\in\RR_+$
\begin{equation*}
2^{q{\frac{\alpha}{\rho}}}\Vert\Delta_{q}\theta\Vert_{L^\rho_tL^r}\le C \Vert \Delta_{q}\theta^0\Vert_{L^r}+ C \Vert \theta^{0}\Vert_{L^{\infty}} \Vert {\rm \omega} \Vert_{L^{1}_{t}L^{r}}+C \Vert f \Vert_{L^{1}_{t}L^{r}}.
\end{equation*}
Moreover for $q=-1,$ we have
\begin{equation*}
\Vert \Delta_{-1}\theta \Vert_{L^{\rho}_{t}L^{r}}\le C t^{\frac{1}{\rho}} \big( \Vert \Delta_{-1}\theta^{0} \Vert_{L^{r}}+\Vert \theta^{0}\Vert_{L^{\infty}} \int^{t}_{0}\Vert \omega(\tau)\Vert_{L^{r}}d\tau+\int^{t}_{0}\Vert f(\tau)\Vert_{L^{r}}d\tau\big).
\end{equation*}
\end{prop}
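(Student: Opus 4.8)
The plan is to project \eqref{T2} onto the dyadic block $\Delta_q$ and run an $L^r$ energy estimate using the generalized Bernstein inequality for $|\DD|^\alpha$ proved in the appendix. First I would write
$$\partial_t\Delta_q\theta+v\cdot\nabla\Delta_q\theta+|\DD|^\alpha\Delta_q\theta=\Delta_q f-[\Delta_q,v\cdot\nabla]\theta=:g_q,$$
multiply by $|\Delta_q\theta|^{r-2}\Delta_q\theta$ and integrate over $\RR^2$. Since $\textnormal{div}\,v=0$ the convection term disappears, and for $q\ge0$ the generalized Bernstein inequality gives $\int_{\RR^2}(|\DD|^\alpha\Delta_q\theta)\,|\Delta_q\theta|^{r-2}\Delta_q\theta\,dx\ge c\,2^{q\alpha}\|\Delta_q\theta\|_{L^r}^{r}$ (this is where $r\ge2$ enters). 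Combined with H\"older on the source term and a routine regularization to justify the division, this yields
$$\frac{d}{dt}\|\Delta_q\theta\|_{L^r}+c\,2^{q\alpha}\|\Delta_q\theta\|_{L^r}\le\|g_q\|_{L^r}.$$
Gronwall's lemma followed by Young's convolution inequality in time, using $\|e^{-c2^{q\alpha}\cdot}\|_{L^\rho(\RR_+)}\le C\,2^{-q\alpha/\rho}$ with $C$ uniform in $\rho\ge1$ (because $\sup_{\rho\ge1}(c\rho)^{-1/\rho}<\infty$), then gives
$$2^{q\alpha/\rho}\|\Delta_q\theta\|_{L^\rho_tL^r}\le C\|\Delta_q\theta^0\|_{L^r}+C\|\Delta_q f\|_{L^1_tL^r}+C\|[\Delta_q,v\cdot\nabla]\theta\|_{L^1_tL^r}.$$

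It remains to control the commutator in $L^r$. I would expand $[\Delta_q,v\cdot\nabla]\theta$ by Bony's decomposition into the usual paraproduct and remainder terms. In every term the velocity enters either through the classical commutator $[\Delta_q,S_{j-1}v\cdot\nabla]\Delta_j\theta$ with $j\approx q$, or through blocks $\Delta_j v$ with $j\gtrsim q$, or through $(\textnormal{Id}-S_{q+1})v$; the point is that the resulting gradient of $v$ must \emph{not} be estimated by summing $\|\Delta_k\omega\|_{L^r}$ over $k\le q$ (which would cost a factor $q$), but instead by writing $\nabla S_{j-1}v=\nabla\Delta_{-1}v+(S_{j-1}-\Delta_{-1})\nabla v$ and recognizing $(S_{j-1}-\Delta_{-1})\nabla v$ as a fixed Calder\'on--Zygmund operator applied to a low-frequency truncation of $\omega$, hence bounded on $L^r$ uniformly in $q$ for $1<r<\infty$; likewise $\|\Delta_jv\|_{L^r}\lesssim 2^{-j}\|\Delta_j\omega\|_{L^r}$ and $\|\nabla\Delta_{-1}v\|_{L^r}\lesssim\|\omega\|_{L^r}$ by the Biot--Savart law. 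Collecting the pieces one obtains, for all $q\ge-1$,
$$\|[\Delta_q,v\cdot\nabla]\theta\|_{L^r}\lesssim\|\theta\|_{L^\infty}\,\|\omega\|_{L^r},$$
with no loss in $q$. Plugging this in, using $\|\Delta_q f\|_{L^1_tL^r}\le C\|f\|_{L^1_tL^r}$ and bounding $\sup_{[0,t]}\|\theta\|_{L^\infty}$ by $\|\theta^0\|_{L^\infty}$ via Lemma~\ref{lem3}, gives the first claimed inequality.

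For $q=-1$ the Fourier support is a fixed ball, so $|\DD|^\alpha$ provides no gain; I would keep only the positivity $\int_{\RR^2}(|\DD|^\alpha\Delta_{-1}\theta)\,|\Delta_{-1}\theta|^{r-2}\Delta_{-1}\theta\,dx\ge0$, obtaining $\frac{d}{dt}\|\Delta_{-1}\theta\|_{L^r}\le\|\Delta_{-1}f\|_{L^r}+\|[\Delta_{-1},v\cdot\nabla]\theta\|_{L^r}$. Integrating on $[0,t]$, then taking the $L^\rho_t$ norm (which produces the factor $t^{1/\rho}$ by H\"older in time) and inserting the same commutator bound gives the second inequality. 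The main obstacle is precisely the commutator estimate: showing $\|[\Delta_q,v\cdot\nabla]\theta\|_{L^r}\lesssim\|\theta\|_{L^\infty}\|\omega\|_{L^r}$ \emph{without} the logarithmic (linear-in-$q$) loss that a crude estimate produces. This is where the hypothesis $r<\infty$, together with the structure of the two-dimensional Biot--Savart law (which turns $\nabla v$ into a Calder\'on--Zygmund transform of $\omega$), is essential; the rest is the standard frequency-localized parabolic scheme.
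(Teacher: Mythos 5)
Your proposal is correct and follows essentially the same route as the paper: frequency localization, an $L^r$ energy estimate closed by the generalized Bernstein inequality, the key commutator bound $\Vert[\Delta_q,v\cdot\nabla]\theta\Vert_{L^r}\lesssim\Vert\nabla v\Vert_{L^r}\Vert\theta\Vert_{L^\infty}\lesssim\Vert\omega\Vert_{L^r}\Vert\theta^0\Vert_{L^\infty}$, Duhamel plus Young's convolution inequality in time, and the loss-free treatment of the block $q=-1$ via the maximum principle. The only cosmetic difference is that the paper simply quotes the commutator estimate as Lemma~\ref{lem4} (from \cite{hkr}), whereas you sketch its proof via Bony decomposition and the Calder\'on--Zygmund continuity of the Biot--Savart kernel on $L^r$, $1<r<\infty$.
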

\begin{proof}
We start with localizing in frequencies the equation of $\theta$: for $q \in\NN$ we set $\theta_{q}:=\Delta_{q}\theta.$ Then
\begin{equation}\label{T3}
\partial_{t}\theta_{q}+v\cdot\nabla\theta_{q}+\vert{\rm D}\vert^{\alpha}\theta_{q}=-\big[\Delta_{q}, v\cdot\nabla\big]\theta+\Delta_{q}f.
\end{equation}
Multiplying the above equation  by $|\theta_q|^{r-2}{\theta}_q,$ integrating by parts and using H\"{o}lder inequality, we get 
$$\frac1r\frac{d}{dt}\|\theta_q\|_{L^r}^r+\int_{\RR^2}(\vert\textnormal{D}\vert^{\alpha}\theta_q) |\theta_q|^{r-2}\theta_{q}dx \le \|\theta_q\|_{L^r}^{r-1}\|[\Delta_q, v\cdot\nabla]\theta\|_{L^r}+\|\theta_q\|_{L^r}^{r-1}\Vert \Delta_{q}f \Vert_{L^r}.$$
Using the following generalized Bernstein inequality see the appendix,
$$\forall 1 < r\;\;,\;c 2^{q\alpha}\Vert\theta_q\Vert_{L^{r}}^{r} \le \int_{\RR^2}(\vert\textnormal{D}\vert^{\alpha}\theta_q)\vert\theta_{q}\vert^{r-2}\theta_{q}dx,$$
where $c$ depends on $r.$ Inserting this estimate in the previous one we obtain 
$$\frac{1}{r}\frac{d}{dt}\|\theta_q(t)\|_{L^r}^r+c2^{q\alpha}\|\theta_q(t)\|_{L^r}^r\lesssim\|\theta_q(t)\|_{L^r}^{r-1}\|[\Delta_q, v\cdot\nabla]\theta(t)\|_{L^r}+\|\theta_q(t)\|_{L^r}^{r-1}\Vert \Delta_{q}f(t)\Vert_{L^r}.$$
This implies that $$\frac{d}{dt}\|\theta_q(t)\|_{L^r}+c2^{q\alpha} \|\theta_q(t)\|_{L^r}\lesssim \|[\Delta_q, v\cdot\nabla]\theta(t)\|_{L^r}+\Vert \Delta_{q}f(t)\Vert_{L^r}.$$\\
We multiply the above inequality by $e^{c t 2^{q \alpha}},$ we find
\begin{equation}\label{T4}
\frac{d}{dt}\Big( e^{ct 2^{q\alpha}}\|\theta_q(t)\|_{L^r}\Big)\lesssim e^{ct 2^{q\alpha}}\|[\Delta_q, v\cdot\nabla]\theta(t)\|_{L^r}+e^{ct 2^{q\alpha}}\Vert\Delta_{q}f(t) \Vert_{L^r}.
\end{equation}
To estimate the right hand-side, we shall use the following Lemma (see \cite{hkr} for the proof of this Lemma).
\begin{lem}\label{lem4}
Let $v$ be a smooth divergence-free vector field and $\theta$ be a smooth scalar function. Then for all $p\in[1,\infty]$ and $q\geqslant-1,$ 
$$\|[\Delta_q, v\cdot\nabla]\theta\|_{L^p}\lesssim \| \nabla v \|_{L^{p}}\|\theta\|_{L^{\infty}}.$$
\end{lem}
Combined with \eqref{T4} this lemma yields
\begin{eqnarray*}\label{d3}
\frac{d}{dt}\Big(e^{ct 2^{q\alpha}}\|\theta_q(t)\|_{L^r}\Big)&\lesssim& e^{ct 2^{q\alpha}}\|\nabla v(t)\|_{L^{r}}\|\theta(t)\|_{L^\infty}+e^{ct 2^{q\alpha}}\Vert f(t) \Vert_{L^r}\\
&\lesssim& e^{ct 2^{q\alpha}}\|\omega(t)\|_{L^{r}}\|\theta^{0}\|_{L^\infty}+e^{ct 2^{q\alpha}}\Vert f(t)\Vert_{L^r},
\end{eqnarray*}
we have used in the last line Lemma \ref{lem3} and the classical fact $$\| \nabla v \|_{L^{p}}\lesssim\|\omega\|_{L^{p}}\quad\forall p\in]1,\infty[.$$
Integrating the differential inequality we get
$$\|\theta_q(t)\|_{L^r}\lesssim \|\theta_q^0\|_{L^r}e^{-ct2^{q\alpha}}+\|\theta^{0}\|_{L^{\infty}}\int_0^t e^{-c(t-\tau)2^{q\alpha}}\|\omega(\tau)\|_{L^r}d\tau+\int_0^t e^{-c(t-\tau)2^{q\alpha}}\Vert f(\tau) \Vert_{L^r}d\tau.$$
By taking the $L^{\rho}[0,t]$ norm and using convolution inequalities we obtain
\begin{eqnarray*}\label{d4}
2^{q \frac{\alpha}{\rho}}\|\theta_{q}\|_{L^{\rho}_{t}L^{r}}&\lesssim& \|\theta_{q}^{0}\|_{L^{r}}+\|\theta^{0}\|_{L^{\infty}}\int_{0}^{t}\|\omega(\tau)\|_{L^{r}}d\tau+\int_{0}^{t}\Vert f(\tau) \Vert_{L^{r}}d\tau\\
&\lesssim& \|\theta_{q}^{0}\|_{L^{r}}+\|\theta^{0}\|_{L^{\infty}}\|\omega\|_{L^{1}_{t}L^{r}}+\Vert f \Vert_{L^{1}_{t}L^{r}}.
\end{eqnarray*}
This is the desired result for $q \in \NN.$\\

For $q=-1,$ we apply the operator $\Delta_{-1}$ to \eqref{T2}, we obtain 
\begin{equation*}
\partial_{t}\Delta_{-1}\theta+v\cdot\nabla\Delta_{-1}\theta+\vert \DD \vert^{\alpha}\Delta_{-1}\theta=-\big[\Delta_{-1}, v\cdot\nabla\big]\theta+\Delta_{-1}f.
\end{equation*}
Taking the $L^{r}$ norm and using Lemmas \ref{lem3} and \ref{lem4} we get
$$\Vert\Delta_{-1}\theta\Vert_{L^{r}}\le\Vert\Delta_{-1}\theta^{0}\Vert_{L^{r}}+\int^{t}_{0}\Vert[\Delta_{-1}, v\cdot\nabla]\theta(\tau)\Vert_{L^{r}}d\tau +\int^{t}_{0}\Vert\Delta_{-1}f(\tau)\Vert_{L^{r}}d\tau$$
$$\qquad\;\;\lesssim\Vert\Delta_{-1}\theta^{0}\Vert_{L^{r}}+\int^{t}_{0}\Vert\nabla v(\tau)\Vert_{L^{r}} \Vert\theta(\tau)\Vert_{L^{\infty}}d\tau+\int^{t}_{0}\Vert f(\tau)\Vert_{L^{r}} d\tau$$
$$\quad\,\lesssim\Vert\Delta_{-1}\theta^{0}\Vert_{L^{r}}+ \Vert\theta^{0}\Vert_{L^{\infty}}\int^{t}_{0}\Vert \omega(\tau)\Vert_{L^{r}}d\tau+\int^{t}_{0}\Vert f(\tau)\Vert_{L^{r}}d\tau.$$
By taking the $L^{\rho}[0,t]$ norm and using H\"older inequality, we obtain finally
\begin{equation*}
\Vert \Delta_{-1}\theta \Vert_{L^{\rho}_{t}L^{r}}\lesssim t^{\frac{1}{\rho}} \big( \Vert \Delta_{-1}\theta^{0} \Vert_{L^{r}}+\Vert \theta^{0}\Vert_{L^{\infty}}\int^{t}_{0}\Vert \omega(\tau)\Vert_{L^{r}}d\tau+\int^{t}_{0}\Vert f(\tau)\Vert_{L^{r}}d\tau\big).
\end{equation*}
This is the desired result.
\end{proof}

The second smoothing effect is given by the following Proposition.
\begin{prop} \label{prop6}
Let $v$ be a smooth divergence free vector-field of $\RR^{2}$ with vorticity $\omega.$ Let $\theta$ be a smooth solution of \eqref{T2}. Then we have for $q \geqslant -1$ and for $t \in \RR_{+}$ with $[f\equiv 0],$
\begin{equation*}
2^{q \alpha}\int^{t}_{0}\Vert \Delta_{q}\theta(\tau) \Vert_{L^{\infty}}d\tau \lesssim \Vert \theta^{0}\Vert_{L^{\infty}}\left(1+t+(q+2) \Vert \omega \Vert_{L^{1}_{t}L^{\infty}}+\Vert \nabla \Delta_{-1}v \Vert_{L^{1}_{t}L^{\infty}}\right). 
\end{equation*}
\end{prop}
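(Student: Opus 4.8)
The plan is to follow the scheme of the proof of Proposition \ref{prop5}: localise equation \eqref{T2} (with $f\equiv0$) in frequency and establish a smoothing estimate for $\theta_q:=\Delta_q\theta$, the decisive difference being that the commutator $[\Delta_q,v\cdot\nabla]\theta$ is now estimated through Proposition \ref{prop2} rather than Lemma \ref{lem4}. Indeed, Lemma \ref{lem4} would only give $\|[\Delta_q,v\cdot\nabla]\theta\|_{L^\infty}\lesssim\|\nabla v\|_{L^\infty}\|\theta\|_{L^\infty}$, and $\|\nabla v\|_{L^\infty}$ is not controlled by $\|\omega\|_{L^\infty}$; using Proposition \ref{prop2} instead is exactly what produces the terms $(q+2)\|\omega\|_{L^1_tL^\infty}$ and $\|\nabla\Delta_{-1}v\|_{L^1_tL^\infty}$.

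First I would dispose of the case $q=-1$, which is elementary: since $\Delta_{-1}$ maps $L^\infty$ into itself and, by Lemma \ref{lem3} applied with $f\equiv0$, $\|\theta(\tau)\|_{L^\infty}\le\|\theta^0\|_{L^\infty}$, one has $2^{-\alpha}\int_0^t\|\Delta_{-1}\theta(\tau)\|_{L^\infty}\,d\tau\lesssim t\,\|\theta^0\|_{L^\infty}$, which is dominated by the right-hand side (this is where the $t$ term is used). For $q\in\NN$, the function $\theta_q$ satisfies \eqref{T3} with $f\equiv0$,
$$\partial_t\theta_q+v\cdot\nabla\theta_q+\vert\DD\vert^\alpha\theta_q=-[\Delta_q,v\cdot\nabla]\theta ,$$
and arguing as in the proof of Proposition \ref{prop5}, but with the $L^\infty$ norm in place of $L^r$ — the generalised Bernstein inequality of the appendix being used in its endpoint, pointwise form $\vert\DD\vert^\alpha\theta_q(t,x_t)\ge c\,2^{q\alpha}\|\theta_q(t)\|_{L^\infty}$ at a point $x_t$ where $|\theta_q(t,\cdot)|$ attains its supremum (taking $\theta_q(t,x_t)\ge0$, otherwise changing sign), at which $v\cdot\nabla\theta_q$ vanishes since $\nabla\theta_q(t,x_t)=0$ — I would obtain
$$\frac{d}{dt}\|\theta_q(t)\|_{L^\infty}+c\,2^{q\alpha}\|\theta_q(t)\|_{L^\infty}\lesssim\|[\Delta_q,v\cdot\nabla]\theta(t)\|_{L^\infty}.$$
(Alternatively this is the $p=\infty$ case of the frequency-localised smoothing estimate of \cite{hkr}, or it follows by letting $r\to\infty$ in the bound established in the proof of Proposition \ref{prop5}, the constant in the generalised Bernstein inequality being independent of $r$.) Multiplying by $e^{ct2^{q\alpha}}$ and integrating in time,
$$\|\theta_q(t)\|_{L^\infty}\lesssim e^{-ct2^{q\alpha}}\|\Delta_q\theta^0\|_{L^\infty}+\int_0^t e^{-c(t-\tau)2^{q\alpha}}\|[\Delta_q,v\cdot\nabla]\theta(\tau)\|_{L^\infty}\,d\tau .$$

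Next I would plug in Proposition \ref{prop2}, $\|[\Delta_q,v\cdot\nabla]\theta(\tau)\|_{L^\infty}\lesssim\|\theta(\tau)\|_{L^\infty}\big(\|\nabla\Delta_{-1}v(\tau)\|_{L^\infty}+(q+2)\|\omega(\tau)\|_{L^\infty}\big)$, together with $\|\theta(\tau)\|_{L^\infty}\le\|\theta^0\|_{L^\infty}$ (Lemma \ref{lem3}) and $\|\Delta_q\theta^0\|_{L^\infty}\lesssim\|\theta^0\|_{L^\infty}$, and integrate the last display over $[0,t]$. Using $\int_0^t e^{-c\tau2^{q\alpha}}\,d\tau\le(c\,2^{q\alpha})^{-1}$ and, by Fubini, $\int_0^t\!\int_0^\tau e^{-c(\tau-s)2^{q\alpha}}h(s)\,ds\,d\tau\le(c\,2^{q\alpha})^{-1}\int_0^t h(s)\,ds$, the prefactor $2^{q\alpha}$ is exactly absorbed, and one is left with
$$2^{q\alpha}\int_0^t\|\theta_q(\tau)\|_{L^\infty}\,d\tau\lesssim\|\theta^0\|_{L^\infty}\Big(1+(q+2)\|\omega\|_{L^1_tL^\infty}+\|\nabla\Delta_{-1}v\|_{L^1_tL^\infty}\Big) ,$$
which, combined with the $q=-1$ bound, is the assertion.

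The one genuinely delicate point is the $L^\infty$ smoothing inequality for $\theta_q$: for the endpoint $r=\infty$ the ``multiply by $|\theta_q|^{r-2}\theta_q$ and integrate'' device used for finite $r$ in Proposition \ref{prop5} is unavailable, and one must instead invoke the pointwise form of the generalised Bernstein inequality at a maximum of $\theta_q$ — equivalently, the $r$-independence of the constant in the appendix's generalised Bernstein inequality — in order to retain the full $2^{q\alpha}$ gain. Granting this, the remainder is routine bookkeeping: Proposition \ref{prop2} for the commutator, Lemma \ref{lem3} for the $L^\infty$ transport bound, and a one-line convolution estimate for the time integration.
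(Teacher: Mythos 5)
Your treatment of $q=-1$, your use of Proposition \ref{prop2} for the commutator (which is indeed the source of the $(q+2)\Vert\omega\Vert_{L^1_tL^\infty}$ and $\Vert\nabla\Delta_{-1}v\Vert_{L^1_tL^\infty}$ terms), and the final convolution/Fubini bookkeeping are all fine. The gap is precisely the step you flag as delicate, namely the endpoint inequality
\begin{equation*}
\frac{d}{dt}\Vert\theta_q(t)\Vert_{L^\infty}+c\,2^{q\alpha}\Vert\theta_q(t)\Vert_{L^\infty}\lesssim\Vert[\Delta_q,v\cdot\nabla]\theta(t)\Vert_{L^\infty},
\end{equation*}
and neither of your justifications for it holds up. Letting $r\to\infty$ in the estimate of Proposition \ref{prop5} is not possible because the constant in the generalized Bernstein inequality of the appendix is \emph{not} independent of $r$: it carries the factor $4(r-1)/r^{2}$ from Corollary \ref{coro} (and a further $r$-dependent constant from Lemma A.5 of \cite{D}), which tends to $0$ as $r\to\infty$, so the $2^{q\alpha}$ gain evaporates in the limit. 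As for the pointwise form $(\vert\DD\vert^{\alpha}\theta_q)(x_t)\geq c\,2^{q\alpha}\Vert\theta_q(t)\Vert_{L^\infty}$ at a maximum point, it is false in the range the proposition must cover: for $\alpha=2$ the one-dimensional band-limited function $g(x)=\cos x-\frac{1}{4}\cos 2x$ attains its global maximum $3/4$ at $x=0$, where $g''(0)=0$, so a frequency-localized function can have a degenerate maximum and no lower bound of the form $c\,2^{2q}\Vert g\Vert_{L^\infty}$ holds there. For $\alpha<2$ the nonlocality helps, but a quantitative lower bound uniform in the profile of $\theta_q$ is a genuinely nontrivial statement (the ``frequency-localized maximum principles'' in the literature carry correction terms involving the neighbouring blocks $\Delta_j\theta$), and nothing in this paper supplies it.

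This obstruction is exactly why the paper's proof is so much longer than yours: it abandons the $L^\infty$ energy method entirely. It transports $\theta_q$ by the regularized velocity $S_{q-1}v$ (paying the error $(S_{q-1}v-v)\cdot\nabla\theta_q$, estimated by a paraproduct argument, in addition to the commutator from Proposition \ref{prop2}), straightens the transport by composing with the flow $\psi_q$ of $S_{q-1}v$ (paying the commutator $\vert\DD\vert^{\alpha}(\theta_q\circ\psi_q)-(\vert\DD\vert^{\alpha}\theta_q)\circ\psi_q$, controlled by \eqref{T7}), applies Duhamel and the semigroup decay of Lemma \ref{lem5} on each block $\Delta_j\bar\theta_q$, handles the far blocks $\vert j-q\vert\geq N$ with Vishik's estimate, closes the estimate only under a smallness condition $V_q(t)\leq C_1$, and then globalizes by partitioning $[0,T]$. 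Your argument would indeed be a substantial simplification if the pointwise frequency-localized lower bound were available, but as written it rests on an inequality that is unproved here and fails at $\alpha=2$; to repair the proof you must replace that step by the Lagrangian/Duhamel machinery above (or by an honest proof of a frequency-localized maximum principle valid for all $\alpha\in\,]1,2]$, which the $\alpha=2$ example shows cannot take the clean form you state).
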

\begin{proof}
The idea of the proof will be done in the spirit of \cite{hmi}. First we prove the smoothing effects for a small interval of time depending of vector $v$, but it depend not of the initial data. In the second step, we proceed to a division in time thereby extending the estimate at any time arbitrarily chosen positive.
\subsection{Local estimates}
We localize in frequency the evolution equation and rewriting the equation in Lagrangian coordinates .\\
Let $q\in\NN,$ then the Fourier localized function $\theta_{q}:= \Delta_{q}\theta$ satisfies 
\begin{equation}\label{T5}
\partial_{t}\theta_{q}+S_{q-1}v\cdot\nabla\theta_{q}+\vert {\rm D}\vert^{\alpha}\theta_{q}=(S_{q-1}v-v)\cdot\nabla\theta_q-\big[\Delta_{q}, v\cdot\nabla]\theta:=h_q.
\end{equation}
First, we shall estimate the function $h_{q}$ in the space $L^{\infty}$, for the first term we have 
\begin{eqnarray*}\label{dmlm6}
\Vert(S_{q-1}v-v)\cdot\nabla\theta_{q}\Vert_{L^{\infty}}&\le& \Vert S_{q-1}v-v\Vert_{L^{\infty}}\Vert \nabla\theta_{q}\Vert_{L^{\infty}}\\
&\lesssim& \displaystyle \sum_{j \geqslant q-1}\Vert \Delta_{j}v \Vert _{L^{\infty}}2^{q}\Vert \theta_{q}\Vert_{L^{\infty}}\\
&\lesssim& \Vert \theta^{0} \Vert_{L^{\infty}}\displaystyle \sum_{j \geqslant q-1}2^{q-j}\Vert \Delta_{j}\omega \Vert _{L^{\infty}}\\
&\lesssim& \Vert \theta^{0}\Vert_{L^{\infty}}\Vert \omega \Vert_{L^{\infty}}.
\end{eqnarray*}\\
We have used H\"older inequality, Lemma \ref{lem2} and the result $$\Vert \Delta_{j}v \Vert_{L^{p}}\thickapprox 2^{-j}\Vert \Delta_{j}\omega \Vert_{L^{p}}\;\;,\;\;\forall p \in[1,\infty]\;\;\textnormal{and}\quad j \in \NN.$$
For $\Vert [\Delta_{q},v\cdot\nabla]\theta \Vert_{L^{\infty}},$ we use Lemma \ref{lem3} and Proposition \ref{prop2}, then 
\begin{equation*}
\Vert [\Delta_{q},v\cdot\nabla]\theta \Vert_{L^{\infty}}\lesssim \Vert \theta^{0} \Vert_{L^{\infty}}\bigg(\Vert \nabla \Delta_{-1}v \Vert_{L^{\infty}}+(q+2) \Vert \omega \Vert_{L^{\infty}}\bigg).
\end{equation*}\\
This implies that 
\begin{eqnarray*}\label{d7}
\Vert h_{q}(t)\Vert_{L^{\infty}}&\le& \Vert(S_{q-1}v-v)\cdot\nabla\theta_{q}\Vert_{L^{\infty}}+\Vert [\Delta_{q}, v\cdot\nabla]\theta\Vert_{L^{\infty}}\\
&\lesssim& \Vert \theta^{0}\Vert_{L^{\infty}}\bigg(\Vert \nabla \Delta_{-1}v \Vert_{L^{\infty}}+(q+2)\Vert \omega \Vert_{L^{\infty}}\bigg).
\end{eqnarray*} 
Let us now introduce the flow $\psi_{q}$ of the regularized velocity $S_{q-1}v,$
\begin{equation*}
\psi_{q}(t, x)=x+\int_{0}^{t}S_{q-1}v\big(\tau, \psi_{q}(\tau, x)\big)d\tau.	
\end{equation*}
We set  
\begin{equation*}
\Bar{\theta}_{q}(t, x)=\theta_{q}(t, \psi_{q}(t, x))\quad \mbox{and} \quad \Bar{h}_{q}(t, x)=h_{q}(t, \psi_{q}(t, x)).
\end{equation*}
Then we have 
\begin{equation}\label{T6}
\partial_{t}\Bar{\theta}_{q}+\vert\DD\vert^{\alpha}\Bar{\theta}_{q}=\bar{h}_{q}+\vert \DD\vert^{\alpha}\big(\theta_{q}\circ\psi_{q}\big)-\big(\vert \DD\vert^{\alpha}\theta_{q}\big)\circ\psi_{q}:=\bar{h}_q+h_{q}^1.
\end{equation}
Let us admit the following estimate proven in \cite{HZ}
\begin{equation}\label{T7}
\|\vert \DD\vert^{\alpha}\big(\theta_{q}\circ\psi_{q}\big)-\big(\vert \DD\vert^{\alpha}\theta_{q}\big)\circ\psi_{q}\|_{L^p}\le Ce^{CV_{q}(t)}{V_{q}(t)}2^{\alpha q}\Vert\theta_{q}\Vert_{L^{p}}\;,\;\forall\; p \in[1,\infty],\\\\
\end{equation}
where \qquad\qquad $V_{q}(t)= \displaystyle \int^{t}_{0}\Vert \nabla S_{q-1}v(\tau) \Vert_{L^{\infty}} d\tau.$\\
Now, since  the flow $\psi_{q}$ preserves Lebesgue measure then we get by \eqref{T7} 
\begin{eqnarray*}
\big\Vert{h_q^1}(t)\big\Vert_{L^\infty}\le C e^{CV_{q}(t)}V_{q}(t)2^{\alpha q}\Vert\theta_{q}\Vert_{L^{\infty}}.
\end{eqnarray*}
Now, we will again localize in frequency the equation \eqref{T6} through the operator $\Delta_{j},$
$$\partial_{t}\Delta_j\Bar{\theta}_{q}+\vert \DD\vert^{\alpha}\Delta_j\Bar{\theta}_{q}=\Delta_j\Bar{h}_q+\Delta_{j}h^{1}_{q}.$$
Using Duhamel formula
\begin{equation*}
\Delta_{j}\Bar{\theta}_{q}(t,x)= e^{-t \vert \DD \vert^{\alpha}}\Delta_{j}\theta^{0}_{q}+\int^{t}_{0}e^{-(t-\tau) \vert \DD \vert^{\alpha}}\Delta_{j}\Bar{h}_{q}(\tau) d\tau +\int^{t}_{0}e^{-(t-\tau) \vert \DD \vert^{\alpha}}\Delta_{j}h^{1}_{q}(\tau) d\tau.
\end{equation*}
At this stage we need the following lemma (for $\alpha \in ]0,2[$ see \cite{hk3} ).
\begin{lem}\label{lem5}
There exists a positive constant $C$ such that for any $u \in L^{p}$ with $p \in [1,+\infty],$ for any $t,\alpha \in \mathbb{R_{+}}$ and any $j \in \mathbb{N},$ we have
 \begin{equation*}
\Vert e^{-t\,\vert \DD \vert^{\alpha}}\Delta_{j}u \Vert_{L^{p}}\le C e^{-c t\,2^{j \alpha}}\Vert \Delta_{j}u \Vert_{L^{p}}.
\end{equation*}
Where the constants C and c depend only on the dimension d.
\end{lem}
Combining this estimate with Duhamel formula yields for every $j \in\NN$
\begin{equation*}
\Vert e^{-(t-\tau) \vert \DD \vert^{\alpha}} \Delta_{j}\Bar{h}_{q}(\tau) \Vert_{L^{\infty}} 
\le C e^{-c(t-\tau) 2^{j \alpha}}\bigg(\Vert S_{q-1}v-v\big)\cdot\nabla \theta_{q}\Vert_{L^{\infty}}+\Vert [\Delta_{q},v\cdot\nabla]\theta \Vert_{L^{\infty}}\bigg).
\end{equation*}\\
This implies that 
\begin{equation*}
\Vert e^{-(t-\tau) \vert \DD \vert^{\alpha}}\Delta_{j}\Bar{h}_{q}(\tau) \Vert_{L^{\infty}}\lesssim e^{-c(t-\tau) 2^{j \alpha}}\Vert \theta^{0}\Vert_{L^{\infty}}\bigg(\Vert \nabla \Delta_{-1}v \Vert_{L^{\infty}}+(q+2) \Vert \omega \Vert_{L^{\infty}}\bigg) 
\end{equation*}
and
\begin{equation*}
\Vert e^{-(t-\tau) \vert \DD \vert^{\alpha}}\Delta_{j}h^{1}_{q}(\tau) \Vert_{L^{\infty}}\lesssim e^{-c(t-\tau) 2^{j \alpha}}\Vert h_{q}^{1}(\tau) \Vert_{L^{\infty}}\;\lesssim  e^{-c(t-\tau) 2^{j \alpha}}e^{C V_{q}(t)}V_{q}(t) 2^{q \alpha}\Vert \theta_{q}\Vert_{L^{\infty}}.
\end{equation*}\\ Therefore
\begin{eqnarray*}\label{d8}
\Vert \Delta_{j}\Bar{\theta}_{q}(t)\Vert_{L^{\infty}}&\lesssim&  e^{-ct2^{j\alpha}}\Vert \Delta_{j}\theta_{q}^{0}\Vert_{L^{\infty}}+2^{q{\alpha}}e^{CV_{q}(t)}V_{q}(t)\int_{0}^{t}e^{-c(t-\tau)2^{j\alpha}}\Vert \theta_{q}(\tau) \Vert_{L^{\infty}}d\tau\\
&+&(q+2) \Vert \theta^{0}\Vert_{L^{\infty}}\int_{0}^{t}e^{-c(t-\tau)2^{j\alpha}}\Vert \omega(\tau) \Vert_{L^{\infty}}d\tau\\
&+&\Vert \theta^{0}\Vert_{L^{\infty}}\int_{0}^{t}e^{-c(t-\tau)2^{j\alpha}}\Vert \nabla \Delta_{-1}v(\tau) \Vert_{L^{\infty}}d\tau.
\end{eqnarray*}
Integrating in time and using Young inequality, we get for all $j\in\NN$
\begin{eqnarray}\label{T8}
\nonumber\Vert\Delta_{j}\Bar{\theta}_{q}\Vert_{L^1_tL^{\infty}}&\lesssim& 2^{-j\alpha}(\big\Vert\Delta_{j}\theta_{q}^{0}\Vert_{L^{\infty}}+(q+2) \Vert \theta^{0}\Vert_{L^{\infty}}\Vert \omega \Vert_{L^{1}_{t}L^{\infty}}\\
&+&\Vert \theta^{0} \Vert_{L^{\infty}}\Vert \nabla \Delta_{-1}v \Vert_{L^{1}_{t}L^{\infty}}\big)
+2^{(q-j)\alpha}e^{CV_{q}(t)}V_{q}(t) \Vert\theta_{q} \Vert_{L^{1}_{t}L^{\infty}}.
\end{eqnarray}
Let now $N\in\NN$ be a fixed number that will be chosen later. Since the flow $\psi_q$ preserves Lebesgue measure then we write
\begin{eqnarray*}
2^{ q \alpha}\Vert \theta_{q} \Vert_{L_{t}^{1}L^{\infty}}=2^{q \alpha} \Vert\Bar{\theta}_{q}\Vert_{L_{t}^{1}L^{\infty}}
&\le& 2^{q \alpha}\bigg(\displaystyle \sum_{\vert j-q \vert < N}\big\Vert \Delta_{j}\Bar{\theta}_{q}\big\Vert_{L_{t}^{1}L^{\infty}}+\sum_{\vert j-q \vert \geqslant N}\big\Vert \Delta_{j}\Bar{\theta}_{q}\big\Vert_{L_{t}^{1}L^{\infty}}\bigg)\\
&:=&\textrm{I}+\textrm{II}. 
\end{eqnarray*}
If $q \geqslant N,$ then it follows from \eqref{T8},
\begin{eqnarray*}\label{d12}
\textrm{I} &\lesssim& \displaystyle \sum_{\vert j-q \vert < N}2^{(q-j) \alpha}\bigg(\Vert \Delta_{j}\theta^{0}_{q}\Vert_{L^{\infty}}+\Vert \theta^{0} \Vert_{L^{\infty}}\left((q+2) \Vert \omega \Vert_{L^{1}_{t}L^{\infty}}+\Vert \nabla \Delta_{-1}v \Vert_{L^{1}_{t}L^{\infty}}\right)\bigg)\\
&+& \displaystyle \sum_{\vert j-q \vert < N}2^{(q-j) \alpha}V_{q}(t) e^{C V_{q}(t)}2^{q \alpha}\Vert \theta_{q}\Vert_{L^{1}_{t}L^{\infty}}.
\end{eqnarray*}
Thus\\
$\textrm{I} \lesssim \Vert \theta^{0}\Vert_{L^{\infty}}+2^{N \alpha}\Vert \theta^{0}\Vert_{L^{\infty}}\left((q+2) \Vert \omega \Vert_{L^{1}_{t}L^{\infty}}+\Vert \nabla \Delta_{-1}v \Vert_{L^{1}_{t}L^{\infty}}\right) +V_{q}(t) e^{C V_{q}(t)}2^{q \alpha}2^{N \alpha}\Vert \theta_{q} \Vert_{L^{1}_{t}L^{\infty}}.$\\\\
To estimate $\textrm{II},$ we use the following result due to Vishik \cite {vis}\\
\begin{equation*}
\big\Vert\Delta_{j}\Bar{\theta}_{q}\big\Vert_{L^{p}}\lesssim 2^{-\vert q-j\vert}e^{CV_{q}(t)}\Vert \theta_{q}\Vert_{L^{p}}\;,\forall\;p \in [1,\infty].
\end{equation*}
 Thus
\begin{eqnarray*}\label{d13}
\textrm{II}&=&2^{q \alpha}\sum_{\vert j-q \vert \geqslant N}\Vert \Delta_{j}\Bar{\theta}_{q} \Vert_{L^{1}_{t}L^{\infty}}\\
&\lesssim& 2^{q \alpha}\displaystyle \sum_{\vert j-q \vert \geqslant N}2^{-\vert q-j \vert}e^{CV_{q}(t)}\Vert \theta_{q}\Vert_{L^{1}_{t}L^{\infty}}\\
&\lesssim& 2^{-N}e^{CV_{q}(t)}2^{q \alpha}\Vert \theta_{q}\Vert_{L_{t}^{1}L^{\infty}}.
\end{eqnarray*}
We have then
\begin{eqnarray*}\label{d14}
2^{q \alpha}\Vert \theta_{q}\Vert_{L^{1}_{t}L^{\infty}}&\lesssim& \Vert \theta^{0}\Vert_{L^{\infty}}+2^{N \alpha}\Vert \theta^{0}\Vert_{L^{\infty}}\left((q+2) \Vert \omega \Vert_{L^{1}_{t}L^{\infty}}+\Vert \nabla \Delta_{-1}v \Vert_{L^{1}_{t}L^{\infty}}\right)\\ 
&+& V_{q}(t) e^{C V_{q}(t)}2^{q \alpha}2^{N \alpha}\Vert \theta_{q} \Vert_{L^{1}_{t}L^{\infty}}+2^{-N}e^{CV_{q}(t)}2^{q \alpha}\Vert \theta_{q}\Vert_{L_{t}^{1}L^{\infty}}.
\end{eqnarray*}
For low frequencies $q<N,$ we have 
$$2^{q \alpha}\Vert \theta_{q}\Vert_{L^{1}_{t}L^{\infty}}\lesssim 2^{N \alpha}\Vert \theta \Vert_{L^{1}_{t}L^{\infty}}.$$
Therefore we get for $q \geqslant-1$
\begin{eqnarray*}\label{d15}
2^{q \alpha}\Vert \theta_{q}\Vert_{L^{1}_{t}L^{\infty}}&\le& C \Vert \theta^{0}\Vert_{L^{\infty}}+C 2^{N \alpha}\Vert \theta \Vert_{L^{1}_{t}L^{\infty}}+C 2^{N \alpha}\Vert \theta^{0}\Vert_{L^{\infty}}\left((q+2) \Vert \omega \Vert_{L^{1}_{t}L^{\infty}}+\Vert \nabla \Delta_{-1}v \Vert_{L^{1}_{t}L^{\infty}}\right)\\ 
&+& C \left( V_{q}(t) e^{C V_{q}(t)}2^{N \alpha}+2^{-N}e^{CV_{q}(t)} \right) 2^{q \alpha}\Vert \theta_{q}\Vert_{L_{t}^{1}L^{\infty}}.
\end{eqnarray*}
Now, we claim that there exists two absolute constants $N\in\NN$ and $C_{1}>0$ such that if $V_{q}(t) \le C_{1},$ then
\begin{equation*}
V_{q}(t) e^{CV_{q}(t)}2^{N \alpha }+2^{-N}e^{C V_{q}(t)}\le \frac{1}{2C}\cdot
\end{equation*} 
To show this, we take first $t$ such that $V_{q}(t) \le 1,$ which is possible since $\displaystyle \lim_{t\rightarrow 0^{+}}V_{q}(t)=0$. Second, we choose $N$ in order to get $2^{-N}e^{C}\le \frac{1}{4C}$. By taking again $V_{q}(t)$ sufficiently small we obtain that $V_{q}(t) e^{C V_{q}(t)}2^{N \alpha }\le \frac{1}{4 C}.$\\
Under this assumption $V_{q}(t) \le C_{1},$ we obtain for $q \geqslant -1$ 
\begin{equation}\label{T9}
2^{q \alpha}\Vert \theta_{q}\Vert_{L^{1}_{t}L^{\infty}}\lesssim \Vert \theta \Vert_{L^{1}_{t}L^{\infty}}+\Vert \theta^{0}\Vert_{L^{\infty}}\bigg(1+(q+2) \Vert \omega \Vert_{L^{1}_{t}L^{\infty}}+\Vert \nabla \Delta_{-1}v \Vert_{L^{1}_{t}L^{\infty}}\bigg). 
\end{equation}
We use H\"older's inequality and Lemma \ref{lem3} for $\Vert \theta \Vert_{L^{1}_{t}L^{\infty}},$ we get $\Vert \theta \Vert_{L^{1}_{t}L^{\infty}}\le t \Vert \theta^{0} \Vert_{L^{\infty}}.$ \\ Hence
\begin{equation*}
2^{q \alpha}\Vert \theta_{q}\Vert_{L^{1}_{t}L^{\infty}}\lesssim \Vert \theta^{0}\Vert_{L^{\infty}}\bigg(1+t+(q+2) \Vert \omega \Vert_{L^{1}_{t}L^{\infty}}+\Vert \nabla \Delta_{-1}v \Vert_{L^{1}_{t}L^{\infty}}\bigg). 
\end{equation*}
Therefore the result is proved for small time. 
\subsection{Globalization}
Let us now see how to extend this for arbitrary large time $T.$ We take a partition $(T_{i})^{M}_{i=0}$ of $[0,T]$ i.e $0=T_{0}\le T_{1}\le ...\le T_{M}=T$ and such that 
$$\displaystyle \int^{T_{i+1}}_{T_{i}}\Vert \nabla S_{q-1}v(t) \Vert_{L^{\infty}} dt \thickapprox C_{1}\;,\;\forall\;i \in[0,M].$$
Reproducing the same argument in \eqref{T9} we find in view of $\Vert\theta(T_{i}) \Vert_{L^{\infty}}\le \Vert \theta^{0}\Vert_{L^{\infty}}.$
\begin{eqnarray*}\label{xx}
2^{q \alpha}\displaystyle \int^{T_{i+1}}_{T_{i}}\Vert \theta_{q}(t)\Vert_{L^{\infty}}dt &\lesssim& \displaystyle \int^{T_{i+1}}_{T_{i}}\Vert \theta(t) \Vert_{L^{\infty}}dt+\Vert \theta^{0}\Vert_{L^{\infty}}\\
&+&\Vert \theta^{0}\Vert_{L^{\infty}}(q+2) \displaystyle \int^{T_{i+1}}_{T_{i}}\Vert \omega(t) \Vert_{L^{\infty}}dt\\
&+&\Vert \theta^{0}\Vert_{L^{\infty}}\displaystyle \int^{T_{i+1}}_{T_{i}}\Vert \nabla \Delta_{-1}v(t) \Vert_{L^{\infty}}dt.
\end{eqnarray*}
Summing these estimates of 0 to M, we get 
\begin{eqnarray*}\label{d17}
2^{q \alpha}\Vert \theta_{q}\Vert_{L^{1}_{T}L^{\infty}}&\lesssim& \Vert \theta \Vert_{L^{1}_{T}L^{\infty}}+(M+1) \Vert \theta^{0}\Vert_{L^{\infty}}\\
&+&\Vert \theta^{0}\Vert_{L^{\infty}}\bigg((q+2) \Vert \omega \Vert_{L^{1}_{T}L^{\infty}}+\Vert \nabla \Delta_{-1}v \Vert_{L^{1}_{T}L^{\infty}}\bigg). 
\end{eqnarray*}
As $M \thickapprox V_{q}(t),$ then 
\begin{eqnarray*}\label{d18}
2^{q \alpha}\Vert \theta_{q}\Vert_{L^{1}_{T}L^{\infty}}&\lesssim& \Vert \theta \Vert_{L^{1}_{T}L^{\infty}}+\big(V_{q}(t)+1\big) \Vert \theta^{0}\Vert_{L^{\infty}}\\
&+& \Vert \theta^{0}\Vert_{L^{\infty}}\bigg((q+2) \Vert \omega \Vert_{L^{1}_{T}L^{\infty}}+\Vert \nabla \Delta_{-1}v \Vert_{L^{1}_{T}L^{\infty}}\bigg). 
\end{eqnarray*}
From the definition of the operator $S_{q-1},$ we have then
\begin{eqnarray*}\label{d19}
S_{q-1}v&=&\sum_{-1 \le p \le q-2}\Delta_{p}v\\
&=&\Delta_{-1}v+\displaystyle \sum^{q-2}_{p=0}\Delta_{p}v.
\end{eqnarray*} 
Thus
\begin{eqnarray*}\label{d20}
\Vert \nabla S_{q-1}v \Vert_{L^{\infty}}&\le& \Vert \nabla \Delta_{-1}v \Vert_{L^{\infty}}+\displaystyle \sum^{q-2}_{p=0}\Vert \nabla \Delta_{p}v \Vert_{L^{\infty}}\\
&\le& \Vert \nabla \Delta_{-1}v \Vert_{L^{\infty}}+C \displaystyle \sum^{q-2}_{p=0}\Vert \Delta_{p}\omega \Vert_{L^{\infty}}\\
&\le& \Vert \nabla \Delta_{-1}v \Vert_{L^{\infty}}+C (q-1) \Vert \omega \Vert_{L^{\infty}}.
\end{eqnarray*}

 We have used Bernstein inequality and the classical result 
$$\Vert \Delta_{q}v \Vert_{L^{p}}\thickapprox 2^{-q}\Vert \Delta_{q}\omega \Vert _{L^{p}}\;,\;\forall\;p \in[1,\infty].$$ 
Therefore
$$\Vert \nabla S_{q-1}v \Vert_{L^{\infty}}\lesssim \Vert \nabla \Delta_{-1}v \Vert_{L^{\infty}}+ (q+2) \Vert \omega \Vert_{L^{\infty}},$$
then inserting this estimate into the previous one
\begin{equation*}
2^{q \alpha}\Vert \theta_{q}\Vert_{L^{1}_{T}L^{\infty}}\lesssim \Vert \theta^{0}\Vert_{L^{\infty}}\bigg(1+T+(q+2) \Vert \omega \Vert_{L^{1}_{T}L^{\infty}}+\Vert \nabla \Delta_{-1}v \Vert_{L^{1}_{T}L^{\infty}}\bigg). 
\end{equation*}
This is the desired result.

\end{proof}
\begin{Rema}\label{R}    
If the velocity belongs to $L^1_t{\rm Lip}$ then the previous  estimate becomes
$$\forall\;1 \le \rho \le \infty\;,\;2^{q {\frac{\alpha}{\rho}} }\Vert\Delta_{q}\theta\Vert_{L^\rho_tL^\infty}\lesssim\Vert \theta^0\Vert_{L^\infty}\bigg(1+t+\|\nabla v\|_{L^1_t L^\infty}\bigg).$$
\end{Rema}

\section{Proof of Theorem \ref{TX}}\label{W2}
This section is devoted to the proof of theorem \ref{TX}. For conciseness, we shall provide the a priori estimates supporting the claim of the theorem and give a complete proof of the uniqueness, while the proof of the existence part will be shortened and briefly described since it is somewhat contained in \cite{hk1}.

\subsection{a Priori Estimates} 
The important quantities to bound for all times are the $L^\infty$ norm of the vorticity and the Lipschitz norm of the velocity. 
The main step for obtain a Lipschitz bound is give an $L^\infty$-bound of the vorticity. We prove before an $L^p$ estimate with $p<\infty$ and this allows us to bound the vorticity in $L^\infty.$ We start then with the following one,
\begin{prop}\label{prop7}
Let $(\alpha, p)\in]1, 2]\times[1,\infty[\;,\;\omega^0\in L^\infty\cap L^p$, $\theta^0\in B^{1-\alpha}_{p,1}\cap L^{\infty},$ and $F\in\mathcal{C}^{1}(\RR,\RR).$ Then any smooth  solution of the system \eqref{T1} satisfies\\
$\bullet$\qquad\qquad\qquad$\Vert\theta(t)\Vert_{L^{\infty}}\le \Vert \theta^{0}\Vert_{L^{\infty}}.$\\
$\bullet$\qquad\qquad\qquad$\|\omega(t)\|_{L^\infty\cap L^{p}}+\|\nabla\theta\|_{L^1_tL^\infty}\le C_0 e^{C_0 t}.$
\end{prop}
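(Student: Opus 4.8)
The first estimate is immediate, and the remainder is a bootstrap argument. Since $\theta$ solves the transport--diffusion equation \eqref{T2} with $f\equiv 0$, Lemma~\ref{lem3} with $p=\infty$ gives $\|\theta(t)\|_{L^\infty}\le\|\theta^0\|_{L^\infty}$; write $C_0$ for any constant depending only on the data. As $F\in\mathcal C^1$ with $F(0)=0$, the derivative $F'$ is bounded on the compact $[-C_0,C_0]$, so for every $m\in[1,\infty]$ the source term of the vorticity equation $\partial_t\omega+v\cdot\nabla\omega=\partial_1(F_2(\theta))-\partial_2(F_1(\theta))$ obeys
\[
\big\|\partial_1(F_2(\theta))-\partial_2(F_1(\theta))\big\|_{L^m}=\big\|F_2'(\theta)\,\partial_1\theta-F_1'(\theta)\,\partial_2\theta\big\|_{L^m}\le C_0\,\|\nabla\theta\|_{L^m}.
\]
Thus the whole statement reduces to a coupled estimate between the merely transported vorticity $\omega$ and $\nabla\theta$, which does gain regularity from $|\DD|^\alpha$ precisely because $\alpha>1$.

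\textbf{$L^p$ bound of the vorticity.} The maximum principle for the transport equation satisfied by $\omega$ gives $\|\omega(t)\|_{L^p}\le\|\omega^0\|_{L^p}+C_0\int_0^t\|\nabla\theta(\tau)\|_{L^p}\,d\tau$. To bound $\|\nabla\theta\|_{L^1_tL^p}$ I would apply Proposition~\ref{prop5} with $\rho=1$ and $r=p$ (legitimate for $p\ge2$; when $1<p<2$ one first observes that $B^{1-\alpha}_{p,1}\cap L^\infty\hookrightarrow B^{1-\alpha}_{2,1}$ by dyadic interpolation against $L^\infty$ and that $\omega^0\in L^p\cap L^\infty\subset L^2$, and runs the argument with the exponent $2$): for $q\in\NN$,
\[
2^{q\alpha}\|\Delta_q\theta\|_{L^1_tL^p}\lesssim\|\Delta_q\theta^0\|_{L^p}+C_0\,\|\omega\|_{L^1_tL^p},
\]
together with the analogous low-frequency bound for $q=-1$ (carrying an extra factor $t$). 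Writing $2^q=2^{q(1-\alpha)}2^{q\alpha}$ and summing in $q$ — the geometric series $\sum_{q}2^{q(1-\alpha)}$ converges since $\alpha>1$ — yields
\[
\|\nabla\theta\|_{L^1_tL^p}\lesssim(1+t)\Big(\|\theta^0\|_{B^{1-\alpha}_{p,1}}+C_0\,\|\omega\|_{L^1_tL^p}\Big)\le C_0(1+t)\big(1+\|\omega\|_{L^1_tL^p}\big).
\]
Plugging this into the transport inequality and invoking Grönwall's lemma gives $\|\omega\|_{L^\infty_tL^p}\le C_0 e^{C_0 t}$.

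\textbf{$L^1_tL^\infty$ bound of $\nabla\theta$ and $L^\infty$ bound of $\omega$.} Here I would use Proposition~\ref{prop6} (with $f\equiv0$): for every $q\ge-1$,
\[
2^{q\alpha}\|\Delta_q\theta\|_{L^1_tL^\infty}\lesssim C_0\Big(1+t+(q+2)\|\omega\|_{L^1_tL^\infty}+\|\nabla\Delta_{-1}v\|_{L^1_tL^\infty}\Big),
\]
where the low-frequency term is controlled via Bernstein's inequality and the Biot--Savart law by $\|\nabla\Delta_{-1}v\|_{L^\infty}\lesssim\|\omega\|_{L^p}$. Multiplying by $2^q$ and summing over $q$, using $\alpha>1$ once more so that $\sum_{q}2^{q(1-\alpha)}(q+2)<\infty$, one obtains
\[
\|\nabla\theta\|_{L^1_tL^\infty}\lesssim C_0\Big(1+t+\|\omega\|_{L^1_t(L^p\cap L^\infty)}\Big).
\]
Applying the maximum principle again to the vorticity equation, $\|\omega(t)\|_{L^\infty}\le\|\omega^0\|_{L^\infty}+C_0\|\nabla\theta\|_{L^1_tL^\infty}$, and inserting the previous display together with the $L^p$ bound already established, Grönwall's lemma yields $\|\omega\|_{L^\infty_t(L^p\cap L^\infty)}\le C_0 e^{C_0 t}$; substituting this back produces the claimed bound for $\|\nabla\theta\|_{L^1_tL^\infty}$ as well.

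The only genuinely delicate point — and the single place where the subcriticality $\alpha>1$ is essential — is the passage from the dyadic smoothing estimates of Propositions~\ref{prop5} and~\ref{prop6} to the bounds on $\|\nabla\theta\|$: the diffusive gain $2^{q\alpha}$ must dominate not only the derivative loss $2^q$ but also the logarithmic-type factor $q+2$ produced by the commutator $[\Delta_q,v\cdot\nabla]\theta$. This is exactly what makes the bootstrap between the non-regularizing vorticity equation and the temperature equation close; everything else is the standard maximum-principle-plus-Grönwall machinery.
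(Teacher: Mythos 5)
Your argument is correct and follows essentially the same route as the paper: maximum principle for $\theta$ in $L^\infty$, the $L^p$ bound on $\omega$ obtained by feeding Proposition \ref{prop5} (with $\rho=1$, $r=p$) into the transport estimate and summing the convergent series $\sum_q 2^{q(1-\alpha)}$, then the $L^\infty$ bound via Proposition \ref{prop6}, the convergence of $\sum_q 2^{q(1-\alpha)}(q+2)$ for $\alpha>1$, the control of $\Vert\nabla\Delta_{-1}v\Vert_{L^\infty}$ by $\Vert\omega\Vert_{L^p}$, and Gronwall. The only point where you diverge is your workaround for $1<p<2$ (prompted by the restriction $r\ge 2$ in Proposition \ref{prop5}); as written it delivers an $L^2$ bound on $\omega$ rather than the claimed $L^p$ bound, whereas the paper simply applies Proposition \ref{prop5} with $r=p$ without comment — its proof in fact goes through for every $r\in]1,\infty[$, needing only the generalized Bernstein inequality and $\Vert\nabla v\Vert_{L^r}\lesssim\Vert\omega\Vert_{L^r}$ — so no detour is required.
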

\begin{proof}
The first inequality is a direct consequence of Lemma \ref{lem3}. For the second one, we start with the vorticity equation $$\partial_{t}\omega+v\cdot\nabla\omega=F^{\prime}_{2}(\theta)\partial_{1}\theta- F^{\prime}_{1}(\theta)\partial_{2}\theta.$$
Taking the $L^{p}$ norm we get
\begin{equation*}
\Vert\omega(t)\Vert_{L^{p}}\le\Vert\omega^{0}\Vert_{L^{p}}+\int^{t}_{0}\bigg( \sum^{2}_{i=1} \Vert F^{\prime}_{i}(\theta) \Vert_{L^{\infty}}\bigg) \Vert \nabla \theta(\tau) \Vert_{L^{p}}\,d\tau.
\end{equation*}
Since $\Vert F^{\prime}_{i}(\theta) \Vert_{L^{\infty}}= \displaystyle \sup_{x,t}\vert F^{\prime}_{i} (\theta (x,t) )\vert$, then we have
\begin{eqnarray*}\label{d21}
\Vert F^{\prime}_{i}(\theta) \Vert_{L^{\infty}}&\le& \sup_{\vert x \vert \leqslant \Vert \theta \Vert_{L^{\infty}}}\vert F^{\prime}_{i} (x) \vert\\
&\le&\displaystyle \sup_{\vert x \vert \leqslant \Vert \theta^{0} \Vert_{L^{\infty}}}\vert F^{\prime}_{i} (x)\vert.
\end{eqnarray*}\\
As $F \in \mathcal{C}^{1}(\RR,\RR)$ and $\theta^{0} \in L^{\infty},$ then
$$\Vert F^{\prime}_{i}(\theta)\Vert_{L^{\infty}}\le \sup_{\vert x \vert \le \Vert\theta^{0} \Vert_{L^{\infty}}}\Vert\nabla F_{i}\Vert_{L^{\infty}}\leqslant C.$$
Thus
\begin{equation*}
\Vert \omega(t) \Vert_{L^{p}}\lesssim \Vert \omega^{0}\Vert_{L^{p}}+\Vert \nabla \theta \Vert_{L^{1}_{t}L^{p}}.
\end{equation*} 
To estimate $\Vert \nabla \theta \Vert_{L^{1}_{t}L^{p}},$ we use Bernstein inequality and Proposition \ref{prop5}, we get 
\begin{eqnarray*}\label{d22}
\Vert \nabla \theta \Vert_{L^{1}_{t}L^{p}}&\le& \sum_{q \geqslant 0}\Vert \nabla \Delta_{q}\theta \Vert_{L^{1}_{t}L^{p}}+ \Vert \nabla \Delta_{-1}\theta \Vert_{L^{1}_{t}L^{p}}\\
&\lesssim& \sum_{q \geqslant 0}2^{q}\Vert   \Delta_{q}\theta \Vert_{L^{1}_{t}L^{p}} + \Vert \Delta_{-1}\theta \Vert_{L^{1}_{t}L^{p}}\\
&\lesssim& \displaystyle \sum_{q \geqslant0}2^{q(1- \alpha)}\Vert   \Delta_{q}\theta^{0}\Vert_{L^{p}} +\displaystyle \sum_{q \geqslant 0}2^{q(1- \alpha)}\Vert \theta^{0}\Vert_{L^{\infty}}\Vert \omega \Vert_{L^{1}_{t}L^{p}}\\
&+& t \big( \Vert \Delta_{-1}\theta^{0} \Vert_{L^{p}}+\Vert \theta^{0}\Vert_{L^{\infty}} \Vert \omega\Vert_{L^{1}_{t}L^{p}}\big)\\
&\lesssim& \Vert \theta^{0}\Vert_{B^{1-\alpha}_{p,1}}+\Vert \theta^{0}\Vert_{L^{\infty}}\Vert \omega \Vert_{L^{1}_{t}L^{p}}\\
&\lesssim& \Vert \theta^{0}\Vert_{B^{1-\alpha}_{p,1}\cap L^{\infty}}(1+\Vert \omega \Vert_{L^{1}_{t}L^{p}}).
\end{eqnarray*}
Therefore 
\begin{equation*}
\Vert \omega(t) \Vert_{L^{p}}\lesssim \Vert \omega^{0}\Vert_{L^{p}}+ \Vert \theta^{0}\Vert_{B^{1-\alpha}_{p,1}\cap L^{\infty}}(1+\int^{t}_{0}\Vert \omega(\tau) \Vert_{L^{p}} d\tau).
\end{equation*}
Using Gronwall's inequality, we obtain then
\begin{equation}\label{T10}
\Vert \omega(t) \Vert_{L^{p}}\lesssim C_{0}e^{C_{0}t}.
\end{equation}
Where $C_{0}$ is a constant depending on the initial data. We can now estimate $\Vert \omega(t) \Vert_{L^{\infty}}.$ For this we take the $L^{\infty}$ norm of the velocity equation, we get 
\begin{equation*}
\Vert\omega(t)\Vert_{L^{\infty}}\le\Vert\omega^{0}\Vert_{L^{\infty}}+\int^{t}_{0}
\bigg(\sum^{2}_{i=1} \Vert F^{\prime}_{i}(\theta) \Vert_{L^{\infty}}\bigg) \Vert \nabla \theta(\tau) \Vert_{L^{\infty}}\,d\tau.
\end{equation*}
Then we have 
\begin{equation*}
\Vert\omega(t)\Vert_{L^{\infty}}\lesssim \Vert \omega^{0}\Vert_{L^{\infty}}+\Vert \nabla \theta \Vert_{L_{t}^{1}L^{\infty}}.
\end{equation*} 
Using the classical embedding  $B_{\infty, 1}^{1}\hookrightarrow \textnormal{Lip}(\RR^2)$, we obtain
\begin{equation*}
\Vert \omega(t) \Vert_{L^{\infty}}\lesssim \Vert \omega^{0}\Vert_{L^{\infty}}+\Vert
\theta \Vert_{L_{t}^{1}B_{\infty, 1}^{1}}.
\end{equation*}
We have from the definition of Besov space and H\"older inequality,
\begin{eqnarray*}\label{d23}
\Vert \theta \Vert_{L^{1}_{t}B^{1}_{\infty,1}}&=&\sum_{q \geqslant -1}2^{q}\Vert \Delta_{q}\theta \Vert_{L^{1}_{t}L^{\infty}}\\
&=& C \Vert \Delta_{-1}\theta \Vert_{L^{1}_{t}L^{\infty}}+\displaystyle \sum_{q \in \NN}2^{q}\Vert \Delta_{q}\theta \Vert_{L^{1}_{t}L^{\infty}}
\end{eqnarray*}
Then we have
$$\Vert \theta \Vert_{L^{1}_{t}B^{1}_{\infty,1}}\le C t \Vert \theta^{0}\Vert_{L^{\infty}}+\displaystyle \sum_{q \in \NN}2^{q}\Vert \Delta_{q}\theta \Vert_{L^{1}_{t}L^{\infty}}.$$
We use now Proposition \ref{prop6}, we obtain then
\begin{equation*}
\sum_{q \in \NN}2^{q}\Vert \Delta_{q}\theta \Vert_{L^{1}_{t}L^{\infty}}\le \sum_{q \in \NN}2^{q(1-\alpha)}\Vert \theta^{0}\Vert_{L^{\infty}}\bigg(1+t+(q+2) \Vert \omega \Vert_{L^{1}_{t}L^{\infty}}+\Vert \nabla \Delta_{-1}v \Vert_{L^{1}_{t}L^{\infty}}\bigg).
\end{equation*}
Since $\alpha  >1,$ then the series $\displaystyle \sum_{q \in \NN}2^{q(1-\alpha)}$ and $\displaystyle \sum_{q \in \NN}2^{q(1-\alpha)}(q+2)$ are convergent. It follows that 
\begin{equation*}
\sum_{q \in \NN}2^{q}\Vert \Delta_{q}\theta \Vert_{L^{1}_{t}L^{\infty}}\lesssim \Vert \theta^{0}\Vert_{L^{\infty}}\left(1+t+\Vert \omega\Vert_{L^{1}_{t}L^{\infty}}+\Vert \nabla \Delta_{-1}v \Vert_{L^{1}_{t}L^{\infty}}\right) .
\end{equation*}
This implies that
\begin{eqnarray*}\label{d24}
\Vert \theta \Vert_{L^{1}_{t}B^{1}_{\infty,1}}&\lesssim& \Vert \theta^{0}\Vert_{L^{\infty}}\left(1+t+\Vert \omega\Vert_{L^{1}_{t}L^{\infty}}+\Vert \nabla \Delta_{-1}v \Vert_{L^{1}_{t}L^{\infty}}\right) \\
&\lesssim& \Vert \theta^{0}\Vert_{L^{\infty}}\left(1+t+\Vert \omega \Vert_{L^{1}_{t}L^{\infty}}+\Vert \nabla \Delta_{-1}v \Vert_{L^{1}_{t}L^{p}}\right) \\
&\lesssim& \Vert \theta^{0}\Vert_{L^{\infty}}\left( 1+t+\Vert \omega \Vert_{L^{1}_{t}L^{\infty}}+\Vert \nabla v \Vert_{L^{1}_{t}L^{p}}\right)\\
&\lesssim& \Vert \theta^{0}\Vert_{L^{\infty}}\left(1+t+\Vert \omega \Vert_{L^{1}_{t}L^{\infty}}+\Vert \omega \Vert_{L^{1}_{t}L^{p}}\right).
\end{eqnarray*}
Therefore
\begin{equation}\label{T11}
\Vert \theta \Vert_{L^{1}_{t}B^{1}_{\infty,1}}\lesssim \Vert \theta^{0}\Vert_{L^{\infty}}\left(1+t+\Vert \omega \Vert_{L^{1}_{t}(L^{\infty}\cap L^{p})}\right) .
\end{equation}
We have used the classical result
$\Vert \nabla v \Vert_{L^{p}}\thickapprox \Vert \omega \Vert_{L^{p}}\;,\;\forall p \in]1,\infty[.$ Hence from \eqref{T10} and \eqref{T11}, we get
\begin{eqnarray*}\label{d25}
\Vert \omega(t) \Vert_{L^{\infty}}&\lesssim& \Vert \omega^{0}\Vert_{L^{\infty}}+\Vert \theta^{0}\Vert_{L^{\infty}}\left( 1+t+\int^{t}_{0}C_{0}e^{C_{0}\tau}d\tau+\int^{t}_{0}\Vert \omega(\tau) \Vert_{L^{\infty}}d\tau\right)\\
&\lesssim& \Vert \omega^{0}\Vert_{L^{\infty}}+\Vert \theta^{0}\Vert_{L^{\infty}}\left( 1+t+C_{0}e^{C_{0}t}\right)+\Vert \theta^{0}\Vert_{L^\infty}\displaystyle \int^{t}_{0}\Vert \omega(\tau) \Vert_{L^{\infty}} d\tau.
\end{eqnarray*}
According to Gronwall's inequality, one has
\begin{equation}\label{T12}
\Vert \omega(t) \Vert_{L^{\infty}}\lesssim C_{0}e^{C_{0}t}.
\end{equation}
Consequently
\begin{equation}\label{T13}
\Vert \omega(t) \Vert_{L^{\infty}\cap L^{p}}\lesssim C_{0}e^{C_{0}t}.
\end{equation}
 Plugging this estimate  into \eqref{T11} gives
\begin{equation}\label{T14}
\|\theta\|_{L^1_{t}B_{\infty,1}^{1}}\leq C_{0}e^{C_{0}t}.
\end{equation}
This gives in view of Besov embedding $$\|\nabla \theta\|_{L^1_t L^\infty}\le  C_{0}e^{C_{0}t}.$$
This conclude the proof of the proposition.\\
\end{proof}
We shall now give a bound for the norm Lipschitz of the velocity.
\begin{prop}\label{prop8}
Under the same assumptions of Proposition \ref{prop7} and if in addition  $\omega^0\in B_{\infty,1}^0$and $F \in \mathcal{C}^{3}(\RR,\RR),$ then we have for every $t\in\RR_+$
$$\Vert\omega(t)\Vert_{L^{\infty}_{t}B_{\infty, 1}^{0}}+\|\nabla v(t)\|_{L^\infty}\le C_{0}e^{\exp{C_{0}t}}.$$
\end{prop}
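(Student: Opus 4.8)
The plan is to run a Vishik--type logarithmic estimate on the vorticity equation and then close the resulting inequality by Gronwall; the double exponential will be the price of the factor $1+\int_0^t\|\nabla v\|_{L^\infty}$ that unavoidably enters that estimate. Recall that $\omega$ solves
\[
\partial_t\omega+v\cdot\nabla\omega=\partial_1\big(F_2(\theta)\big)-\partial_2\big(F_1(\theta)\big)=:f .
\]
First I would apply Proposition \ref{prop1} with $p=\infty$, $r=1$ and this $f$, which, since $L^1_tB^0_{\infty,1}=\widetilde L^1_tB^0_{\infty,1}$, gives
\[
\|\omega\|_{\widetilde L^\infty_tB^0_{\infty,1}}\lesssim\Big(\|\omega^0\|_{B^0_{\infty,1}}+\|f\|_{L^1_tB^0_{\infty,1}}\Big)\Big(1+\int_0^t\|\nabla v(\tau)\|_{L^\infty}\,d\tau\Big).
\]
The forcing is controlled as follows: $\partial_j$ sends $B^1_{\infty,1}$ into $B^0_{\infty,1}$ (Bernstein on each dyadic block), so $\|f\|_{B^0_{\infty,1}}\lesssim\|F_1(\theta)\|_{B^1_{\infty,1}}+\|F_2(\theta)\|_{B^1_{\infty,1}}$; since $F\in\mathcal C^3=\mathcal C^{[1]+2}$ and $F(0)=0$, Theorem \ref{theo2} combined with the bound $\|\theta(t)\|_{L^\infty}\le\|\theta^0\|_{L^\infty}$ from Proposition \ref{prop7} yields $\|F_i(\theta)\|_{B^1_{\infty,1}}\le C_0\|\theta\|_{B^1_{\infty,1}}$, hence $\|f\|_{L^1_tB^0_{\infty,1}}\le C_0\|\theta\|_{L^1_tB^1_{\infty,1}}\le C_0e^{C_0t}$ by \eqref{T14}.

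Next I would express $\|\nabla v\|_{L^\infty}$ through the unknown: by the Biot--Savart law $\|\Delta_q\nabla v\|_{L^\infty}\approx\|\Delta_q\omega\|_{L^\infty}$ for $q\ge0$, while the low-frequency part is handled by Bernstein, so $\|\nabla v\|_{L^\infty}\lesssim\|\Delta_{-1}\nabla v\|_{L^\infty}+\sum_{q\ge0}\|\Delta_q\omega\|_{L^\infty}\lesssim\|\omega\|_{L^p}+\|\omega\|_{B^0_{\infty,1}}$, and $\|\omega\|_{L^p}\le C_0e^{C_0t}$ by \eqref{T13}. Setting $\Phi(t):=\|\omega\|_{\widetilde L^\infty_tB^0_{\infty,1}}$, which is nondecreasing and dominates $\|\omega(\tau)\|_{B^0_{\infty,1}}$ for every $\tau\le t$, the two displays above combine into an inequality of the form
\[
\Phi(t)\lesssim C_0e^{C_0t}\Big(1+C_0e^{C_0t}+\int_0^t\Phi(\tau)\,d\tau\Big).
\]
Gronwall's lemma then gives $\Phi(t)\le C_0e^{\exp(C_0t)}$, and plugging this back into the bound for $\|\nabla v\|_{L^\infty}$ yields $\|\nabla v(t)\|_{L^\infty}\le C_0e^{\exp(C_0t)}$; together with $\|\omega\|_{L^\infty_tB^0_{\infty,1}}\le\Phi(t)$ this is the claim.

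I expect the only genuine subtlety to be the self-referential nature of this scheme: $\|\nabla v\|_{L^\infty}$ is not available a priori and must itself be bounded by $\|\omega\|_{B^0_{\infty,1}}$, so the logarithmic estimate turns into a linear integral inequality for $\Phi$ whose coefficients already grow like $e^{C_0t}$ (because of the $L^p$ bound on $\omega$ and the $L^1_tB^1_{\infty,1}$ bound on $\theta$ from Proposition \ref{prop7}); this is precisely what forces the double exponential and is not improvable by this argument. The remaining ingredients --- the composition estimate for $F_i(\theta)$, the mapping property of $\partial_j$ on Besov spaces, and the embedding $\widetilde L^\infty_tB^0_{\infty,1}\hookrightarrow L^\infty_tB^0_{\infty,1}$ --- are routine given the tools recalled in Section \ref{W}.
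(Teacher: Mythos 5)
Your proposal is correct and follows essentially the same route as the paper: applying Proposition \ref{prop1} to the vorticity equation, controlling the forcing via the composition Theorem \ref{theo2} together with the bound \eqref{T14} on $\|\theta\|_{L^1_tB^1_{\infty,1}}$, splitting $\|\nabla v\|_{L^\infty}$ into a low-frequency part controlled by $\|\omega\|_{L^p}$ and a high-frequency part controlled by $\|\omega\|_{B^0_{\infty,1}}$, and closing with Gronwall to produce the double exponential. The only difference is cosmetic: you make the self-referential structure of the integral inequality explicit through the function $\Phi$, whereas the paper states it implicitly by combining the numbered estimates.
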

\begin{proof} We decompose $v$ in frequencies as
$$v(t)= \Delta_{-1}v(t) + \sum_{q \geqslant 0} \Delta_{q}v(t)$$
then we have
\begin{eqnarray*}\label{d26}
\| \nabla v(t) \|_{L^\infty}&\le&\| \nabla \Delta_{-1}v(t) \|_{L^\infty}+\displaystyle \sum_{q \geqslant 0}\| \nabla \Delta_{q} v(t) \|_{L^\infty}\\
&\lesssim& \| \nabla \Delta_{-1}v(t) \|_{L^{p}}+\displaystyle \sum_{q \geqslant 0}2^{q}\Vert \Delta_{q}v(t) \Vert_{L^{\infty}}\\
&\lesssim& \Vert \nabla v(t) \Vert_{L^{p}}+\displaystyle \sum_{q \geqslant 0}\Vert \Delta_{q}\omega(t) \Vert_{L^{\infty}}.
\end{eqnarray*}
Hence 
\begin{equation}\label{T15}
\| \nabla v(t) \|_{L^\infty}\lesssim \| \omega(t) \|_{L^{p}}+\| \omega(t) \|_{\widetilde L^\infty_{t}B_{\infty,1}^0}.    
\end{equation}
Let us now turn to the estimate of $\|\omega\|_{\widetilde L^\infty_{t}B_{\infty,1}^0}.$ We apply Proposition \ref{prop1} to the vorticity equation,
\begin{eqnarray*}
\|\omega\|_{\widetilde L^\infty_{t}B_{\infty,1}^0}\lesssim\left(\|\omega^0\|_{B_{\infty,1}^0}+\int^{t}_{0}(\Vert \partial_{1}(F_{2}(\theta(\tau))) \Vert_{B^{0}_{\infty,1}}+\Vert \partial_{2}(F_{1}(\theta(\tau))) \Vert_{B^{0}_{\infty,1}})d\tau\right)\left(1+\|\nabla v\|_{L^1_{t}L^\infty}\right).
\end{eqnarray*}
By the definition of Besov spaces and Lemma \ref{lem2}, we find :
\begin{equation*}
\Vert \partial_{1}(F_{2}(\theta)) \Vert_{B^{0}_{\infty,1}}
\leqslant C\Vert F_{2}(\theta) \Vert_{B^{1}_{\infty,1}}
\end{equation*}
To estimate $\Vert F_{2}(\theta) \Vert_{B^{1}_{\infty,1}}$ we use Theorem \ref{theo2},
\begin{eqnarray*}\label{d27}
\Vert F_{2}(\theta) \Vert_{B^{1}_{\infty,1}}&\le& C \sup_{\vert x \vert \le C \Vert \theta \Vert_{L^{\infty}}}\Vert F_{2}^{[1]+2}(x) \Vert_{L^{\infty}}\Vert \theta \Vert_{B^{1}_{\infty,1}}\\
&\le& C \displaystyle \sup_{\vert x \vert \le C \Vert \theta^{0} \Vert_{L^{\infty}}}\Vert F_{2}^{(3)}(x) \Vert_{L^{\infty}}\Vert \theta \Vert_{B^{1}_{\infty,1}}\\
&\le& C \Vert \theta \Vert_{B^{1}_{\infty,1}}.
\end{eqnarray*} 
Therefore
$$\Vert \partial_{1}(F_{2}(\theta)) \Vert_{B^{0}_{\infty,1}}\lesssim \Vert \theta \Vert_{B^{1}_{\infty,1}}.$$
Similarly, we obtain $\Vert \partial_{2}(F_{1}(\theta)) \Vert_{B^{0}_{\infty,1}}\lesssim \Vert \theta \Vert_{B^{1}_{\infty,1}}.$ Finally we get 
 \begin{equation}\label{T16}
\|\omega\|_{\widetilde L^\infty_{t}B_{\infty,1}^0}\lesssim\big(\|\omega^0\|_{B_{\infty,1}^0}+\|\theta\|_{L^1_{t}B_{\infty,1}^1}\big)\big(1+\|\nabla v\|_{L^1_{t}L^\infty}\big)
\end{equation}
Putting together \eqref{T10},\eqref{T14},\eqref{T15} and \eqref{T16} and using Gronwall's inequality, we deduce
\begin{equation}\label{T17}
\|\omega\|_{\widetilde L^\infty_{t}B_{\infty,1}^0}+\|\nabla v(t)\|_{L^\infty}\le C_0 e^{\exp{C_0 t}}.
\end{equation}
\end{proof}
Now we will describe the last part of the a priori estimates.
\begin{prop}\label{prop9} Let $(\alpha,p) \in]1,2] \times]1, \infty[\;,\;v^{0}\in B_{p, 1}^{1+\frac{2}{p}}$ be a divergence free vector-field of $\RR^2\;,\,\theta^{0}\in B_{p, 1}^{-\alpha+1+\frac{2}{p}}\cap L^{\infty}$ and $F \in \mathcal{C}^{5}(\RR,\RR).$ Then for every $\rho \geqslant 1$ and for every $t \in \RR_{+}$, 
\begin{equation*}
\Vert \theta \Vert_{\widetilde L^{\rho}_{t}B^{\frac{\alpha}{\rho}}_{\infty,\infty}} +\Vert\theta\Vert_{L_{t}^{1}B_{p, 1}^{1+\frac{2}{p}}}+\Vert v \Vert_{{L}_{t}^{\infty}B_{p, 1}^{1+\frac{2}{p}}}\le C_0 e^{e^{\exp{C_0 t}}}.
\end{equation*}
\end{prop}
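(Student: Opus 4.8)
The plan is to feed the bounds already obtained in Propositions~\ref{prop7} and~\ref{prop8} into the three linear estimates at our disposal. Throughout, write $V(t):=\|\nabla v\|_{L^1_tL^\infty}$ and $s_0:=-\alpha+1+\tfrac2p$, so that $s_0+\alpha=1+\tfrac2p$ and $s_0\in\,]-1,2[$ (the lower bound uses $\alpha\le2$, $p<\infty$; the upper bound uses $\alpha>1$). From Proposition~\ref{prop8}, integrated in time, and from Proposition~\ref{prop7}, one has $V(t)\le C_0e^{\exp C_0t}$ and $\|\omega\|_{L^\infty_t(L^p\cap L^\infty)}+\|\nabla\theta\|_{L^1_tL^\infty}+\|\theta\|_{L^1_tB^1_{\infty,1}}\le C_0e^{C_0t}$, so in particular $e^{CV(t)}\le C_0e^{e^{\exp C_0t}}$; since products of finitely many such triple exponentials are again of this form after enlarging $C_0$, it suffices to bound each of the three norms in the statement by $C_0e^{e^{\exp C_0t}}$. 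The first will come from Remark~\ref{R}, the second from Proposition~\ref{prop4}, and the third from Proposition~\ref{prop3} with source $F(\theta)$ together with Theorem~\ref{theo2}.

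First I would handle $\|\theta\|_{\widetilde L^\rho_tB^{\alpha/\rho}_{\infty,\infty}}$. Since $v\in L^1_t\mathrm{Lip}$, Remark~\ref{R} gives $2^{q\alpha/\rho}\|\Delta_q\theta\|_{L^\rho_tL^\infty}\lesssim\|\theta^0\|_{L^\infty}\bigl(1+t+V(t)\bigr)$ for every $q\ge-1$ and every $\rho\in[1,\infty]$; the right-hand side does not depend on $q$, so taking the supremum over $q$ — which is exactly the $B^{\alpha/\rho}_{\infty,\infty}$ norm computed inside the Chemin--Lerner space — and inserting the bound on $V$ gives $\|\theta\|_{\widetilde L^\rho_tB^{\alpha/\rho}_{\infty,\infty}}\le C_0e^{\exp C_0t}$, which is even better than needed.

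Next, $\|\theta\|_{L^1_tB^{1+2/p}_{p,1}}$ comes from Proposition~\ref{prop4} applied with $f\equiv0$ at regularity $s=s_0$: this bounds $\|\theta\|_{\widetilde L^\infty_tB^{s_0}_{p,1}}+\|\theta\|_{L^1_tB^{1+2/p}_{p,1}}$ by $Ce^{CV(t)}$ times $\|\theta^0\|_{B^{s_0}_{p,1}}(1+t)$ plus the term $\int_0^t\|\nabla\theta(\tau)\|_{L^\infty}\|v(\tau)\|_{B^{s_0}_{p,1}}\,d\tau$ when $s_0\ge1$ (and nothing extra when $s_0<1$). If $s_0<1$ the estimate is decoupled and gives $\|\theta\|_{L^1_tB^{1+2/p}_{p,1}}\le C_0e^{e^{\exp C_0t}}$ at once. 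If $s_0\ge1$ — which forces $1<\alpha\le\tfrac2p$, hence $0\le s_0-1<1$ and $s_0<\tfrac2p$ — one first cuts the coupling with the velocity estimate at one regularity level lower: Proposition~\ref{prop4} at regularity $\tilde s:=s_0-1$, where the $\Gamma_{\tilde s}$ term is switched off since $\tilde s<1$, gives the decoupled bound $\|\theta\|_{L^1_tB^{2/p}_{p,1}}\le C_0e^{e^{\exp C_0t}}$ (note $\tilde s+\alpha=\tfrac2p$ and $\theta^0\in B^{\tilde s}_{p,1}$); then Proposition~\ref{prop3} at regularity $\tfrac2p$ with source $F(\theta)$, together with Theorem~\ref{theo2} and $\|\theta\|_{L^\infty}\le\|\theta^0\|_{L^\infty}$ (so that $\|F(\theta)\|_{B^{2/p}_{p,1}}\le C_0\|\theta\|_{B^{2/p}_{p,1}}$), gives $\|v\|_{L^\infty_tB^{2/p}_{p,1}}\le C_0e^{e^{\exp C_0t}}$; since $s_0<\tfrac2p$ this controls $\|v(\tau)\|_{B^{s_0}_{p,1}}$ uniformly in $\tau\le t$, so the $\Gamma_{s_0}$ integral above is $\le\|\nabla\theta\|_{L^1_tL^\infty}\,\|v\|_{L^\infty_tB^{2/p}_{p,1}}\le C_0e^{e^{\exp C_0t}}$, and hence again $\|\theta\|_{L^1_tB^{1+2/p}_{p,1}}\le C_0e^{e^{\exp C_0t}}$. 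Finally, $\|v\|_{L^\infty_tB^{1+2/p}_{p,1}}$ comes from Proposition~\ref{prop3} at regularity $1+\tfrac2p$ with source $F(\theta)$: since $F\in\mathcal C^5\subset\mathcal C^{[1+2/p]+2}$ and $\|\theta\|_{L^\infty}\le\|\theta^0\|_{L^\infty}$, Theorem~\ref{theo2} gives $\|F(\theta(\tau))\|_{B^{1+2/p}_{p,1}}\le C_0\|\theta(\tau)\|_{B^{1+2/p}_{p,1}}$, whence $\|v\|_{L^\infty_tB^{1+2/p}_{p,1}}\le C_0e^{e^{\exp C_0t}}\bigl(1+\|\theta\|_{L^1_tB^{1+2/p}_{p,1}}\bigr)\le C_0e^{e^{\exp C_0t}}$. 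Summing the three bounds gives the statement.

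The step I expect to be the real obstacle is this last coupling: the velocity estimate needs $\theta$ in $L^1_tB^{1+2/p}_{p,1}$, while for $s_0=-\alpha+1+\tfrac2p\ge1$ the temperature estimate needs $v$ in $B^{s_0}_{p,1}$, and a single Gronwall argument on the full-regularity pair would produce a quadratic (Riccati-type) inequality and thus an extra exponential, overshooting the triple exponential of the statement. The mechanism that makes everything close is the inequality $s_0<2$, which allows one to break the loop one step below the top regularity, at $B^{2/p}_{p,1}$, where the offending $\Gamma$-term vanishes (because $s_0-1<1$); that intermediate bound provides exactly the uniform control of $\|v\|_{B^{s_0}_{p,1}}$ needed to run Proposition~\ref{prop4} at the top level. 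Apart from this, the only care required is bookkeeping: each factor $e^{CV(t)}$ costs only the triple exponential $C_0e^{e^{\exp C_0t}}$, and products of finitely many such quantities are reabsorbed into one.
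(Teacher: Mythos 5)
Your proposal is correct and follows essentially the same route as the paper: Remark \ref{R} with the Lipschitz bound for the first norm, Proposition \ref{prop4} with the case split $s_0<1$ versus $s_0\ge 1$ for the second, and Proposition \ref{prop3} combined with Theorem \ref{theo2} for the third. The only (harmless) difference is in how the coupling is broken when $s_0\ge1$: you descend once by one unit of regularity to $B^{s_0-1}_{p,1}$ and use the embedding $B^{2/p}_{p,1}\hookrightarrow B^{s_0}_{p,1}$ to control $\Gamma_{s_0}$, whereas the paper iterates the $\theta$--$v$ estimates, lowering the regularity by $\alpha$ at each step until $-(n+1)\alpha+1+\tfrac2p<1$; both land in the decoupled regime and yield the same triple-exponential bound.
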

\begin{proof}
For the first estimate $\Vert\theta\Vert_{\widetilde {L}^{\rho}_{t}B_{\infty,\infty}^{\frac{\alpha}{\rho}}},$ it suffices to combine Remark \ref{R} with Lipschitz estimate of the velocity \eqref{T17} as follows
\begin{eqnarray*}\label{d28}
2^{q \frac{\alpha}{\rho}}\Vert \Delta_{q}\theta \Vert_{L^\rho_{t}L^{\infty}}&\lesssim& \Vert \theta^{0}\Vert_{L^{\infty}}\bigg(1+t+\Vert \nabla v \Vert_{L^{1}_{t}L^{\infty}}\bigg)\\
&\lesssim& C_{0}e^{\exp C_{0}t}.
\end{eqnarray*}
Hence, it follows that 
$$\| \theta \|_{\widetilde {L}^{\rho}_{t} B^{\frac{\alpha}{\rho}}_{\infty,\infty}}\le C_{0} e^{\exp{C_{0} t}}.$$
In order to prove  the second estimate of the Proposition, we need to split the proof in  two cases : $s<1$ and $s\geqslant1$ with $s=-\alpha+1+\frac2p.$\\
$\bullet$ \textbf{First case} $s=-\alpha+1+\frac2p<1$. We apply Proposition \ref{prop4} to the temperature equation, we get
\begin{equation*}
\|\theta\|_{L^1_tB_{p,1}^{1+\frac{2}{p}}}\lesssim\|\theta^0\|_{B_{p,1}^{-\alpha+1+\frac{2}{p}}}(1+t)e^{CV(t)}.
\end{equation*}
Since
\begin{equation*}
V(t)= \int^{t}_{0}\Vert \nabla v(\tau) \Vert_{L^{\infty}}\,d\tau\\
\leqslant \int^{t}_{0}C_0 e^{\exp{C_{0}\tau}}\,d\tau \leqslant C_0 e^{\exp{C_{0}t}}.
\end{equation*}
Finally, we obtain for the first case
\begin{equation*}
\|\theta\|_{L^1_tB_{p,1}^{1+\frac{2}{p}}}\lesssim C_{0}e^{e^{\exp C_{0}t}}.
\end{equation*}
$\bullet$ \textbf{Second case} $s=-\alpha+1+\frac2p\geq 1$. Applying once again Proposition \ref{prop4} we get by H\"older's inequality 
\begin{equation*}
\|\theta\|_{L^1_tB_{p,1}^{1+\frac{2}{p}}}\lesssim\|\theta^0\|_{B_{p,1}^{-\alpha+1+\frac{2}{p}}}\Big(1+t+\|\nabla\theta\|_{L^1_tL^\infty}\|v\|_{L^\infty_t B_{p,1}^{-\alpha+1+\frac{2}{p}}}\Big)e^{CV(t)}.
\end{equation*}
Using Proposition \ref{prop7} and \eqref{T17}, we obtain easily
\begin{equation*}
\|\theta\|_{L^1_tB_{p,1}^{1+\frac{2}{p}}}\le C_0 e^{e^{\exp C_0 t}}\,\big(1+\|v\|_{L^\infty_t B_{p,1}^{-\alpha+1+\frac{2}{p}}}\big).
\end{equation*}
Now, applying Proposition \ref{prop3} to the velocity equation we have 
\begin{eqnarray*}
\|v\|_{L^\infty_t B_{p,1}^{-\alpha+1+\frac{2}{p}}}&\lesssim& e^{CV(t)}\big(\|v^0\|_{ B_{p,1}^{-\alpha+1+\frac{2}{p}}}+\|F(\theta)\|_{L^1_t B_{p,1}^{-\alpha+1+\frac{2}{p}}}\big).
\end{eqnarray*}
For the $\Vert F(\theta)\Vert_{B_{p,1}^{-\alpha+1+\frac{2}{p}}},$ we use Theorem \ref{theo2}, then we have 
$$\Vert F(\theta)\Vert_{B_{p,1}^{-\alpha+1+\frac{2}{p}}}\le C \displaystyle \sup_{\vert x \vert \le C \Vert \theta^{0} \Vert_{L^{\infty}}}\Vert F^{[-\alpha+1+\frac{2}{p}]+2}(x)\Vert_{L^{\infty}}\Vert \theta \Vert_{B^{-\alpha+1+\frac{2}{p}}_{p,1}}.$$
Since $-\alpha+1+\frac{2}{p}\geqslant1,\;\;3\le[-\alpha+1+\frac{2}{p}]+2 <4$ and $F\in\mathcal{C}^{5}(\RR,\RR),$ we deduce then 
\begin{equation*}
\Vert F(\theta) \Vert_{B_{p,1}^{-\alpha+1+\frac{2}{p}}}\lesssim\Vert \theta \Vert_{B^{-\alpha+1+\frac{2}{p}}_{p,1}}.
\end{equation*}
Hence we get 
\begin{eqnarray*}
\Vert v \Vert_{L^\infty_t B_{p,1}^{-\alpha+1+\frac{2}{p}}}\lesssim C_0 e^{e^{\exp C_0 t}}\big( 1+\|\theta\|_{L^1_t B_{p,1}^{-\alpha+1+\frac{2}{p}}}\big).
\end{eqnarray*} 
Consequently 
$$\|\theta\|_{L^1_tB_{p,1}^{1+\frac{2}{p}}}\lesssim C_0 e^{e^{\exp C_0 t}}\big( 1+\|\theta\|_{L^1_t B_{p,1}^{-\alpha+1+\frac{2}{p}}}\big).$$
Iterating this procedure we get for $n\in\NN$
$$\|\theta\|_{L^1_tB_{p,1}^{1+\frac{2}{p}}}\le C_0 e^{e^{\exp C_0 t}}\big( 1+\|\theta\|_{L^1_t B_{p,1}^{-n\alpha+1+\frac{2}{p}}}\big).$$
To conclude it is enough to choose $n$ such  that $-(n+1)\alpha+1+\frac{2}{p}<1$  and then  we can apply the first case. Finally we get
\begin{equation}\label{T18}
\|\theta\|_{L^1_tB_{p,1}^{1+\frac{2}{p}}}\le C_0 e^{e^{\exp C_0 t}}.
\end{equation}
Applying again Proposition \ref{prop3} to the velocity equation, we obtain
\begin{eqnarray*}
\|v(t)\|_{B_{p,1}^{1+\frac 2p}}&\lesssim& e^{CV(t)}\big( \|v^0\|_{B_{p,1}^{1+\frac 2p}}+\|F(\theta)\|_{L^1_tB_{p,1}^{1+\frac{2}{p}}}\big).\\
\end{eqnarray*}
As above, we use Theorem \ref{theo2} for $\Vert F(\theta) \Vert_{B^{1+\frac{2}{p}}_{p,1}}$ we obtain
\begin{eqnarray*}
\Vert F(\theta) \Vert_{B_{p,1}^{1+\frac 2p}}\le C \sup_{\vert x \vert \le C\Vert \theta^{0}\Vert_{L^{\infty}}}\Vert F^{[1+\frac{2}{p}]+2}(x)\Vert_{L^{\infty}}\Vert \theta \Vert_{B_{p,1}^{1+\frac 2p}}.
\end{eqnarray*}
Here $[1+\frac{2}{p}]+2<5,\,F \in \mathcal{C}^{5}(\RR,\RR)$ and $\theta^{0} \in L^{\infty},$ then we get
\begin{eqnarray*}
\Vert F(\theta) \Vert_{L^{1}_{t}B_{p,1}^{1+\frac 2p}} \lesssim \Vert \theta \Vert_{L^{1}_{t}B_{p,1}^{1+\frac 2p}}.
\end{eqnarray*}
We obtain from \eqref{T18} 
\begin{eqnarray*}\label{d29}
\|v(t)\|_{B_{p,1}^{1+\frac 2p}}&\lesssim& C_{0}e^{e^{\exp C_{0}t}}\big( 1+\|\theta\|_{L^1_tB_{p,1}^{1+\frac{2}{p}}}\big)\\
&\lesssim&C_0 e^{e^{\exp C_0 t}}.
\end{eqnarray*}
Therefore
$$\Vert v(t) \Vert_{L^{\infty}_{t}B^{1+\frac{2}{p}}_{p,1}}\lesssim C_{0}e^{e^{\exp C_{0}t}}.$$
\end{proof}

\subsection{A uniqueness result.}
In this paragraph, we will establish a uniqueness result for the system \eqref{T1} in the following space.
$$\mathcal{A}_T= (L^{\infty}_{T}L^{p}\cap L^{1}_{T} \dot{W}^{1,p}\cap L^{1}_{T}{\rm Lip})\times (L^{1}_{T} L^{p}\cap L^{\infty}_{T}(B^{-\alpha+1+\frac2p}_{p,1}\cap L^{\infty})\cap L^{1}_{T}{\rm Lip}),\qquad 1<p<\infty.$$
We take two solutions $\left\lbrace (v_{j},\theta_{j})\right\rbrace ^{2}_{j=1}$ for \eqref{T1} belonging to the space $\mathcal{A}_{T},$ for a fixed time $T>0,$ with initial data $(v^{0}_{j}, \theta^{0}_{j}),\,j=1,2.$  
We set $$v=v_2-v_1,\quad\theta=\theta_2-\theta_1\quad {\rm and }\quad p=p_2-p_1.$$
Then we find the equations
\begin{equation*}\label{thr}
\left\{ 
\begin{array}{ll}
\partial_t v+v_2\cdot\nabla v=-\nabla p-v\cdot\nabla v_1+F(\theta_{1})-F(\theta_{2})\\ 
\partial_t\theta+v_2\cdot\nabla \theta+\vert D \vert^{\alpha}\theta=-v\cdot\nabla\theta_1\\
v_{| t=0}=v^0, \quad \theta_{| t=0}=\theta^0. 
\end{array} \right.
\end{equation*}
Taking the $L^{p}$ norm of the velocity, we get
$$\Vert v(t) \Vert_{L^{p}}\leqslant \Vert v^{0}\Vert_{L^{p}}+\int^{t}_{0}\big(\Vert v(\tau) \Vert_{L^{p}} \Vert \nabla v_{1}(\tau) \Vert_{L^{\infty}}+ \Vert \nabla p(\tau) \Vert_{L^{p}}\big)\,d\tau +\Vert F(\theta_{1})-F(\theta_{2}) \Vert_{L^{1}_{t}L^{p}}.$$
To estimate the pressure, we write the following identity with the incompressibility condition
\begin{eqnarray*}
\nabla p=\nabla \Delta^{-1}{\rm div }\big(-v\cdot\nabla v_1+F(\theta_{1})-F(\theta_{2}) \big)-\nabla\Delta^{-1}{\rm div }(v_2\cdot\nabla v).
\end{eqnarray*}
Since ${\rm div } (v_{2}\cdot \nabla v)= {\rm div } (v\cdot \nabla v_2),$ then
$$\nabla p=\nabla\Delta^{-1}{\rm div }\left(-v\cdot\nabla (v_1+v_2)+F(\theta_{1})-F(\theta_{2})\right).$$
Using the continuity of Riesz transform on $L^{p}$ with $1<p<\infty,$ we get 
$$\|\nabla p\|_{L^p}\lesssim \|v\|_{L^p}\left(\|\nabla v_1\|_{L^\infty}+\|\nabla v_2\|_{L^\infty}\right)+\|F(\theta_{1})-F(\theta_{2})\|_{L^p}.$$
Combining this estimate with the $L^p$ estimate of the velocity we get
$$\|v(t)\|_{L^p}\lesssim \|v^0\|_{L^p}+\int_0^t\|v(\tau)\|_{L^p}(\big\|\nabla v_1(\tau)\|_{L^\infty}+\|\nabla v_2(\tau)\|_{L^\infty}\big)d\tau+\|F(\theta_{1})-F(\theta_{2})\|_{L^1_t L^p}.$$
Let us now estimate $\Vert F(\theta_{1})-F(\theta_{2}) \Vert_{L^{1}_{t}L^{p}}.$ Applying Taylor formula at order 1,
$$F(\theta_{1})-F(\theta_{2})= (\theta_{1}-\theta_{2})\int^{1}_{0} F^{\prime}\left( \theta_{2}+s\,(\theta_{1}-\theta_{2})\right) \,ds.$$
Taking the $L^{p}$ norm yields
$$\Vert F(\theta_{1})-F(\theta_{2}) \Vert_{L^{p}} \leqslant \Vert \theta \Vert_{L^{p}} \int^{1}_{0} \Vert F^{\prime}(\theta_{2}+s\,(\theta_{1}-\theta_{2})) \Vert_{L^{\infty}}\,ds.$$
Now we write
$$\Vert  F^{\prime}(\theta_{2}+s\,(\theta_{1}-\theta_{2})) \Vert_{L^{\infty}}\le \displaystyle \sup_{\vert x \vert \le C \Vert \theta^{0}\Vert_{L^{\infty}}} \vert F^{\prime} (x) \vert \le C.$$
Therefore
$$\Vert  F(\theta_{1})-F(\theta_{2}) \Vert_{L_{t}^{1}L^{p}}\lesssim \Vert \theta \Vert_{L^{1}_{t}L^{p}}.$$
Thus we obtain
$$\|v(t)\|_{L^p}\lesssim \|v^0\|_{L^p}+\int_0^t\|v(\tau)\|_{L^p}\left(\|\nabla v_1(\tau)\|_{L^\infty}+\|\nabla v_2(\tau)\|_{L^\infty}\right) d\tau+\|\theta\|_{L^1_t L^p}.$$
At this stage, we need to split $\theta$ into two parts $\theta=\widetilde{\theta}_{1}+\widetilde{\theta}_{2}$ where $\widetilde{\theta}_{1}$ and $\widetilde{\theta}_{2}$ solve respectively the following equations
\begin{equation} \label{T20}
\left\{\begin{array}{ll} 
\partial_t\widetilde{\theta}_{1}+v_2\cdot\nabla \widetilde{\theta}_{1}+\vert \DD \vert^{\alpha}\widetilde{\theta}_{1}=-v\cdot\nabla\theta_1\\
\widetilde{\theta_1}_{| t=0}=0. 
\end{array} \right. 
\end{equation} 
and
\begin{equation}\label{T21} 
\left\{\begin{array}{ll} 
\partial_t\widetilde{\theta}_{2}+v_2\cdot\nabla \widetilde{\theta}_{2}+\vert \DD \vert^{\alpha}\widetilde{\theta}_{2}=0\\
\widetilde{\theta_2}_{| t=0}=\theta^0. 
\end{array} \right. 
\end{equation} 
Taking the $L^{p}$ norm of \eqref{T20} we obtain from Lemma \ref{lem3},
\begin{eqnarray*}
\Vert \widetilde{\theta}_{1}(t) \Vert_{L^{p}}\le \displaystyle \int^{t}_{0}\Vert v\cdot\nabla\theta_{1}(\tau)\Vert_{L^{p}}d\tau 
&\le & \displaystyle \int^{t}_{0}\Vert v(\tau)\Vert_{L^{p}}\Vert \nabla \theta_{1}(\tau)\Vert_{L^{\infty}}d\tau.
\end{eqnarray*}
Integrating in time, we get
\begin{equation*}
\Vert \widetilde{\theta}_{1}(t) \Vert_{L^1_tL^{p}}\le t \int^{t}_{0}\Vert v(\tau)\Vert_{L^{p}}\Vert \nabla \theta_{1}(\tau)\Vert_{L^{\infty}}d\tau.
\end{equation*}
Now, we apply the operator $\Delta_{q}$ to the equation \eqref{T21} we have 
\begin{equation*}
\partial_{t}\Delta_{q}\displaystyle \widetilde{\theta}_{2}+v_{2}\cdot\nabla \Delta_{q}\widetilde{\theta}_{2}+\vert \DD\vert^{\alpha}\Delta_{q}\widetilde{\theta}_{2}=-[\Delta_{q},v_{2}\cdot\nabla] \widetilde{\theta}_{2}.
\end{equation*}
Taking the $L^{p}$ norm of the above equation and using Proposition \ref{prop5}, Lemma \ref{lem3} and Lemma \ref{lem4}, we obtain for $q\in\NN,$
\begin{eqnarray*}\label{d29}
\Vert \Delta_{q}\widetilde{\theta}_{2}\Vert_{L^{1}_{t}L^{p}}&\lesssim& 2^{-q \alpha}\Vert \Delta_{q}\theta^{0}\Vert_{L^{p}}+2^{-q \alpha}\int^{t}_{0}\Vert \nabla v_{2}(\tau)\Vert_{L^{p}}\Vert \widetilde{\theta}_{2}(\tau) \Vert_{L^{\infty}} d\tau\\
&\lesssim& 2^{-q \alpha}\Vert \Delta_{q}\theta^{0}\Vert_{L^{p}}+2^{-q \alpha}\Vert \theta^{0}\Vert_{L^{\infty}}\int^{t}_{0}\Vert \nabla v_{2}(\tau)\Vert_{L^{p}}d\tau.
\end{eqnarray*}
Summing these estimates on $q\geqslant -1$ and using Lemma \ref{lem3} and Proposition \ref{prop5}, we find
\begin{eqnarray*}\label{d30}
\sum_{q \geqslant -1}\Vert \Delta_{q}\widetilde{\theta}_{2}\Vert_{L^{1}_{t}L^{p}}&\lesssim& \sum_{q \geqslant 0}2^{-q \alpha}\Vert \Delta_{q}\theta^{0}\Vert_{L^{p}}+\sum_{q \geqslant 0}2^{-q \alpha}\Vert \theta^{0}\Vert_{L^{\infty}}\int^{t}_{0}\Vert \nabla v_{2}(\tau)\Vert_{L^{p}}d\tau+\Vert \Delta_{-1}\widetilde{\theta}_{2}\Vert_{L^{1}_{t}L^{p}}\\
&\lesssim& \displaystyle \sum_{q \geqslant 0}2^{-q \alpha}\Vert \Delta_{q}\theta^{0}\Vert_{L^{p}}+\Vert \theta^{0}\Vert_{L^{\infty}}\int^{t}_{0}\Vert \nabla v_{2}(\tau)\Vert_{L^{p}}d\tau\\
&+&t(\Vert \Delta_{-1}\theta^{0}\Vert_{L^{p}}+\Vert \theta^{0}\Vert_{L^{\infty}}\displaystyle \int^{t}_{0}\Vert \nabla v_{2}(\tau)\Vert_{L^{p}}d\tau)\\
&\lesssim& \Vert \theta^{0}\Vert_{B^{-\alpha}_{p,1}}+\Vert \theta^{0}\Vert_{L^{\infty}}\Vert \nabla v_{2}\Vert_{L^{1}_{t}L^{p}}. 
\end{eqnarray*}
 Therefore
\begin{equation*}
\Vert \widetilde{\theta}_{2}\Vert_{L^{1}_{t}L^{p}}\le \sum_{q\geqslant -1}\Vert \Delta_{q}\widetilde{\theta}_{2}\Vert_{L^{1}_{t}L^{p}}\lesssim \Vert \theta^{0}\Vert_{B^{-\alpha+1+\frac{2}{p}}_{p,1}\cap L^{\infty}}(1+\Vert \nabla v_{2}\Vert_{L^{1}_{t}L^{p}}).
\end{equation*}
We have used the Besov embedding $B^{-\alpha+1+\frac{2}{p}}_{p,1}\hookrightarrow B^{-\alpha}_{p,1}.$ Now since $\theta=\widetilde{\theta}_{1}+\widetilde{\theta}_{2},$ then we have
\begin{eqnarray*}
\Vert \theta \Vert_{L^{1}_{t}L^{p}} 
\le t \displaystyle \int^{t}_{0}\Vert v(\tau) \Vert_{L^{p}}\Vert \nabla \theta_{1}(\tau)\Vert_{L^{\infty}}d\tau +\displaystyle \Vert \theta^{0}\Vert_{B^{-\alpha+1+\frac{2}{p}}_{p,1}\cap L^{\infty}}(1+\int^{t}_{0}\Vert \nabla v_2(\tau) \Vert_{L^{p}}d\tau).
\end{eqnarray*}
Combining this estimate with the $L^{p}$ norm of the velocity, we find
\begin{eqnarray*}\label{d31}
\Vert v(t) \Vert_{L^{p}}&\lesssim& \Vert v^{0}\Vert_{L^{p}}+\int^{t}_{0}\Vert v(\tau) \Vert_{L^{p}}\big(\Vert \nabla v_{1}(\tau) \Vert_{L^{\infty}}+\Vert \nabla v_{2}(\tau) \Vert_{L^{\infty}}\big)d\tau\\
&+&t\displaystyle \int^{t}_{0}\Vert v(\tau)\Vert_{L^{p}}\Vert \nabla \theta_{1}(\tau) \Vert_{L^{\infty}}d\tau
+\Vert \theta^{0}\Vert_{B^{-\alpha+1+\frac{2}{p}}_{p,1}\cap L^{\infty}}(1+\displaystyle \int^{t}_{0}\Vert \nabla v_{2} \Vert_{L^{p}} d\tau).
\end{eqnarray*}
Finally we get by Gronwall's inequality, 
\begin{eqnarray}\label{T22}
\nonumber \|v(t)\|_{L^p}&\lesssim& e^{C(\Vert \nabla v_{1}\Vert_{L^1_t L^{\infty}}+\|\nabla v_2\|_{L^1_t L^{\infty}})}e^{t \Vert \nabla \theta_{1}\Vert_{L^{1}_{t}L^{\infty}}}\big(\|v^0\|_{L^p}\\
&+&\|\theta^0\|_{B^{-\alpha+1+\frac{2}{p}}_{p,1}\cap L^\infty}\Vert\nabla v_{2}\Vert_{L^{1}_{t}L^{p}}\big).
\end{eqnarray}
This gives in turn 
\begin{eqnarray}\label{T23}
\nonumber \Vert \theta(t)\Vert_{L^{1}_{t}L^p}&\lesssim& e^{C(\Vert \nabla v_{1}\Vert_{L^1_t L^{\infty}}+\|\nabla v_2\|_{L^1_t L^{\infty}})}e^{t \Vert \nabla \theta_{1}\Vert_{L^{1}_{t}L^{\infty}}}\big(\|v^0\|_{L^p}\\
&+&\|\theta^0\|_{B^{-\alpha+1+\frac{2}{p}}_{p,1}\cap L^\infty}\Vert \nabla v_{2}\Vert_{L^{1}_{t}L^{p}}\big)(t+1).\end{eqnarray}
The proof of the uniqueness part is now complete.

\subsection{Existence.} Let us now outline briefly the proof of the existence of global solution to \eqref{T1} First we need to the following lemma (see \cite{AH} for the proof).
\begin{lem}\label{lem6}
Let $s \in \RR\;,\,(p,r) \in [1,\infty[^{2}$ and $G \in B^{s}_{p,r}(\RR^d).$ Then there exists $G^{n}\in\mathcal{S}(\RR^{d})$ such that for all $\varepsilon >0$ there exist $n_{0}$ such that 
$$\Vert G^{n}-G \Vert_{B^{s}_{p,r}}\le \varepsilon\;,\;\forall\; n\geqslant n_{0}.$$
If in addition $G\in L^{\infty}(\RR^{d}),$ then
$$\Vert G^{n}\Vert_{L^{\infty}(\RR^{d})}\lesssim \Vert G \Vert_{L^{\infty}(\RR^{d})}.$$ 
And if $\textnormal{div}\,G=0$ then $\textnormal{div}\,G^{n}=0.$
\end{lem}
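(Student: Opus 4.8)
The plan is to construct $G^n$ by combining a frequency truncation with a spatial truncation, and then to modify the construction in the divergence--free case so that the constraint survives. Fix once and for all $\eta\in\mathcal{D}(\RR^d)$ with $0\le\eta\le1$ and $\eta\equiv1$ near the origin, and for integers $N,M\ge1$ set $G_{N,M}:=\eta(\cdot/M)\,S_N G$. Since $S_N G$ is spectrally supported in a ball, it is $C^\infty$ and lies in $L^p$ together with all its derivatives, so multiplying by the compactly supported $\eta(\cdot/M)$ gives $G_{N,M}\in\mathcal{D}(\RR^d)\subset\mathcal{S}(\RR^d)$.

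For the convergence I would proceed in two steps. First, $S_N G\to G$ in $B^s_{p,r}$ as $N\to\infty$: almost--orthogonality gives $\|S_NG-G\|_{B^s_{p,r}}^r\lesssim\sum_{q\ge N-2}2^{qsr}\|\Delta_qG\|_{L^p}^r$, the tail of the convergent series $\|G\|_{B^s_{p,r}}^r$, which is exactly where the hypothesis $r<\infty$ is used. Second, for a fixed band--limited $v\in L^p$ (e.g.\ $v=S_NG$) one has $\sup_{M\ge1}\|\nabla^k(\eta(\cdot/M)v)\|_{L^p}<\infty$ for every $k$, because the negative powers of $M$ produced by differentiating $\eta(\cdot/M)$ are harmless when $M\ge1$; then I would show $\eta(\cdot/M)v\to v$ in $B^s_{p,r}$ by splitting the dyadic sum at a level $Q$. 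For $q>Q$, Bernstein's inequality (Lemma~\ref{lem2}) with a fixed integer $k>s$ yields $\|\Delta_q(\eta(\cdot/M)v-v)\|_{L^p}\lesssim 2^{-qk}$ uniformly in $M$, so this part of the sum is $\lesssim\sum_{q>Q}2^{q(s-k)r}\to0$ as $Q\to\infty$; for $q\le Q$ there are finitely many terms, each $\lesssim\|\eta(\cdot/M)v-v\|_{L^p}\to0$ as $M\to\infty$ by dominated convergence (using $v\in L^p$), which is where $p<\infty$ enters. A diagonal choice $N_n\to\infty$ followed by $M_n\to\infty$ then produces $G^n:=G_{N_n,M_n}\in\mathcal{S}(\RR^d)$ with $\|G^n-G\|_{B^s_{p,r}}\to0$.

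When $G\in L^\infty$ the first extra property is immediate: $\|G^n\|_{L^\infty}\le\|\eta\|_{L^\infty}\|S_{N_n}G\|_{L^\infty}\le\|\mathcal{F}^{-1}\chi\|_{L^1}\|G\|_{L^\infty}$, using that $S_N$ is convolution against an $L^1$ kernel of norm independent of $N$ (equivalently, the uniform $L^\infty$--boundedness of the operators $S_N$ recalled in Section~\ref{W}).

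The only genuinely delicate point is the divergence--free constraint, because multiplication by $\eta(\cdot/M)$ destroys it: $\operatorname{div}G_{N,M}=\tfrac1M(\nabla\eta)(\cdot/M)\cdot S_NG\ne0$ in general. Here I would split $G=\Delta_{-1}G+(\mathrm{Id}-\Delta_{-1})G$, both summands being divergence free. For the high--frequency part $(\mathrm{Id}-\Delta_{-1})G$, whose spectrum is bounded away from $0$, the operator $\Delta^{-1}$ is a harmless smooth Fourier multiplier, so (working in $d=2$, as in the application) the stream function $\psi:=\Delta^{-1}\operatorname{curl}\big((\mathrm{Id}-\Delta_{-1})G\big)$ is a genuine element of $B^{s+1}_{p,r}$ satisfying $\nabla^{\perp}\psi=(\mathrm{Id}-\Delta_{-1})G$; approximating $\psi$ in $B^{s+1}_{p,r}$ by $\psi^n\in\mathcal{S}$ through the construction above and setting $\nabla^{\perp}\psi^n\in\mathcal{S}$ gives divergence--free Schwartz fields converging to $(\mathrm{Id}-\Delta_{-1})G$ in $B^s_{p,r}$. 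The remaining band--limited piece $\Delta_{-1}G$ lives on a fixed compact frequency block and is handled by a separate direct argument (this is the point for which I would invoke \cite{AH}). I expect this reconciliation of the spatial truncation with incompressibility to be the main obstacle; everything else is a routine combination of Bernstein's inequality and dominated convergence.
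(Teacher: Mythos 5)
First, a point of reference: the paper does not actually prove Lemma \ref{lem6} — it is quoted from \cite{AH} with the remark ``see \cite{AH} for the proof'' — so there is no in-paper argument to compare yours against. On its own terms, the core of your construction is correct and is the standard one: $G^n=\eta(\cdot/M_n)S_{N_n}G$ is Schwartz; $S_NG\to G$ in $B^s_{p,r}$ because $r<\infty$ makes the tail of the $\ell^r$ series vanish; the spatial cutoff converges because the high dyadic blocks are controlled uniformly in $M\ge1$ by Bernstein applied to $\nabla^k\bigl(\eta(\cdot/M)S_NG\bigr)$ with $k>s$, while the finitely many low blocks converge by dominated convergence in $L^p$ (this is where $p<\infty$ enters); and the $L^\infty$ bound follows from the uniform $L^1$ bound on the kernel of $S_N$. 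All of that is sound.

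The genuine gap is the divergence-free assertion, which is precisely the delicate one and the one the paper needs (it applies the lemma to $v^0$). Your stream-function repair handles only the part of the spectrum away from the origin; the block $\Delta_{-1}G$ is exactly where $\Delta^{-1}$ fails (in $d=2$ the symbol $|\xi|^{-2}$ is not even locally integrable at $\xi=0$), and ``invoking \cite{AH}'' for that block is circular, since \cite{AH} is the source of the entire lemma. The block can be treated as follows: replace $\Delta_{-1}G$ by $(\mathrm{Id}-\chi(\mathrm{D}/\epsilon))\Delta_{-1}G$, which is still divergence free, has spectrum bounded away from $0$ (so your stream-function argument applies to it), and converges to $\Delta_{-1}G$ in $L^p$ — hence in $B^s_{p,r}$, the two norms being equivalent on this frequency block — because for $1<p<\infty$ one has $\|\chi(\mathrm{D}/\epsilon)h\|_{L^p}=\|\epsilon^d\check\chi(\epsilon\cdot)\ast h\|_{L^p}\le\epsilon^{d/p'}\|\check\chi\|_{L^p}\|h\|_{L^1}\to0$ on the dense set $h\in L^1\cap L^p$, together with the uniform $L^p$ bound. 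Note this step really uses $p>1$ (for $p=1$ and $\widehat{h}(0)\neq0$ the low-frequency content does not disappear in $L^1$), so the divergence-free statement as written for $p\in[1,\infty[$ is at best delicate at $p=1$; the paper only needs $p\in]1,\infty[$. Two further loose ends you should close: (i) the three conclusions must hold for the \emph{same} sequence $G^n$, so after switching to $\nabla^{\perp}\psi^n$ you must re-verify the uniform $L^\infty$ bound (the term $\tfrac1M(\nabla^{\perp}\eta)(\cdot/M)S_N\psi$ is not controlled by $\|G\|_{L^\infty}$ for fixed $M$; it must be killed by sending $M\to\infty$ first for each fixed $N$); (ii) the lemma is stated on $\RR^d$, so the $2$D stream function should be replaced by the antisymmetric potential $A=\Delta^{-1}\bigl(\nabla G-(\nabla G)^{T}\bigr)$ on the high frequencies, with $G=\textnormal{div}\,A$ and $\textnormal{div}(\textnormal{div}\,A^n)=0$ automatic for any antisymmetric Schwartz approximation $A^n$.
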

We consider the following system
\begin{equation}\label{T24} 
\left\{\begin{array}{ll} 
\partial_{t}v_n+v_n\cdot\nabla v_n+\nabla p_n=F(\theta_n)\\ 
\partial_{t}\theta_n+v_n\cdot\nabla\theta_n+\vert \textnormal{D}\vert^\alpha\theta_n=0\\
\textnormal{div}\,v_n=0\\
{v_n}_{| t=0}=v_{n,0}\;\;,\quad {\theta_n}_{| t=0}=\theta_{n,0}.  
\end{array} \right.
\end{equation}
By using the same method of \cite{hk1}, we can prove that this system has a unique local solution $(v_{n},\theta_{n}).$ The global existence of these solutions is governed by $V_{n}$ where
$$V_{n}(t)=\int^{t}_{0}\Vert \nabla v_{n}(\tau)\Vert_{L^{\infty}}d\tau \le \int^{t}_{0}C_{0}e^{\exp\,C_{0}\tau}d\tau \le C_{0}e^{\exp\,C_{0}t}.$$ 
Now from the a priori estimates the Lipschitz norm can not blow up in finite time and then the solution $(v_{n},\theta_{n})$ is globally defined. Once again from the a priori estimates we have
$$\|\theta_{n}\|_{\widetilde L^\rho_TB^{\frac{\alpha}{\rho}}_{\infty,\infty}}+\|v_n\|_{ L^\infty_TB_{p,1}^{1+\frac2p}}+\|\theta_n\|_{L^\infty_T\big( L^\infty\cap B_{p,1}^{-\alpha+1+\frac2p}\big)}\leq C_{0}e^{e^{\exp C_{0}T}}.$$
Then it follows that up to an extraction the sequence $(v_{n},\theta_{n})$ is weakly convergent to $(v,\theta)$ belonging to $L^{\infty}_{T}B_{p,1}^{1+\frac{2}{p}}\times L^{\infty}_{T}(L^{\infty} \cap B_{p,1}^{-\alpha+1+\frac{2}{p}})\cap \widetilde{L}^{\rho}_{T}B^{\frac{\alpha}{\rho}}_{\infty,\infty}.$\\
We will now prove that the sequence $(v_{n},\theta_{n})$ is of a Cauchy in $L^\infty_TL^{ p}\times L^1_TL^{p}.$\\
Let $(n,n_1)\in\NN^2$ , $v_{n,n_1}=v_n-v_{n_{1}}$ and $\theta_{n,n_1}=\theta_n- \theta_{n_{1}}$ then  according to the estimates \eqref{T22} and \eqref{T23}, we get  
$$\|v_{n,n_1}\|_{L^\infty_TL^{p}}+\|\theta_{n,n_1}\|_{L^1_TL^{p}}\leq C_{0}e^{e^{\exp C_{0}T}} \big( \|v_{n,0}- v_{n_{1},0}\|_{L^p}+\|\theta_{n,0}-\theta_{n_{1},0}\|_{B^{-\alpha+1+\frac{2}{p}}_{p,1}\cap L^{\infty}}\big).$$
This show  that the sequence  $(v_n,\theta_n)$ is of a Cauchy in the Banach space $ L^\infty_TL^{p}\times L^1_TL^{p}.$ Hence it converges strongly to $(v,\theta).$ This allows us to pass to the limit in the system \eqref{T24} and then we get that $(v,\theta)$ is a  solution  of the system \eqref{T1}.\\

Let us now sketch the proof of the continuity in time of the velocity. From the definition of Besov space we have for $N \in\NN\;,\;T>0$ and for $t,t^\prime \in \RR_+,$
\begin{eqnarray}\label{T25}
\nonumber\Vert v(t)-v(t^\prime) \Vert_{B^{1+\frac{2}{p}}_{p,1}}&\le& \sum_{q \le N}2^{q(1+\frac{2}{p})}\Vert \Delta_{q}\big(v(t)- v(t^\prime) \big)\Vert_{L^{p}}+2 \sum_{q > N}2^{q(1+\frac{2}{p})}\Vert \Delta_{q}v \Vert_{L^{p}}\\
&\le& C 2^{N(1+\frac{2}{p})} \Vert v(t)-v(t^\prime) \Vert_{L^{p}}+2 \displaystyle \sum_{q > N}2^{q(1+\frac{2}{p})}\Vert \Delta_{q}v \Vert_{L_T^\infty L^{p}}.
\end{eqnarray}
It remains then to estimate $\Vert v(t)-v(t^\prime)\Vert_{L^p}.$ For this purpose we use the velocity equation
$$\partial_{t}v=-\mathcal{P}(v\cdot\nabla v)+ \mathcal{P}F(\theta).$$
Where $\mathcal{P}$ denotes Leray projector. The solution of this equation is given by Duhamel formula
$$v(t,x)=v^{0}(x)-\int^{t}_{0}\mathcal{P}(v\cdot\nabla v)(\tau) d\tau +\int^{t}_{0}\mathcal{P}(F(\theta(\tau))) d\tau.$$
Hence it follows that for $t,t^\prime \in \RR_+,$ 
$$v(t,x)-v(t^\prime,x)=-\int^{t}_{t^\prime}\mathcal{P}(v\cdot\nabla v)(\tau) d\tau +\int^{t}_{t^\prime}\mathcal{P}(F(\theta(\tau))) d\tau.$$
Taking the $L^p$ norm of the above equation and using the fact that the Leray projector $\mathcal{P}$ is continuously into $L^p,$ with $1 < p < \infty$ we get then 
\begin{eqnarray}\label{T26}
\nonumber\Vert v(t)-v(t^\prime) \Vert_{L^p}&\lesssim& \int^{t}_{t^\prime}\Vert (v\cdot\nabla v)(\tau) \Vert_{L^p}d\tau +\int^{t}_{t^\prime}\Vert F(\theta(\tau))\Vert_{L^p}d\tau\\
\nonumber &\lesssim& \displaystyle \int^{t}_{t^\prime}\Vert v(\tau)\Vert_{L^{p}}\Vert \nabla v(\tau)\Vert_{L^{\infty}}d\tau+\int^{t}_{t^\prime}\Vert F(\theta(\tau)) \Vert_{L^p}d\tau\\
&\lesssim& \vert t-t^\prime \vert \Vert v \Vert_{L^{\infty}_{t}L^{p}}\Vert \nabla v \Vert_{L^{\infty}_{t}L^{\infty}}+\displaystyle \int^{t}_{t^\prime}\Vert F(\theta(\tau)) \Vert_{L^p}d\tau.
\end{eqnarray}
We have used H\"older's inequality and integration by parts for the first term of the above inequality. For the last term we use Taylor formula with $F$ vanishing at 0,
\begin{equation*}
F(\theta) =\theta \displaystyle \int^{1}_{0}F^{\prime}(s \theta) ds.
\end{equation*}
Thus
$$\Vert F(\theta) \Vert_{L^{p}}\le \Vert \theta \Vert_{L^{p}}\displaystyle \int^{1}_{0}\Vert F^{\prime}(s \theta) \Vert_{L^{\infty}} ds$$
Now,
\begin{equation*}
\Vert F^{\prime}(s \theta) \Vert_{L^{\infty}}\lesssim \sup_{\vert y \vert \le \Vert \theta^{0}\Vert_{L^{\infty}}}\vert F^{\prime}(y)\vert \le C.
\end{equation*}
Hence it follows that $$\Vert F(\theta) \Vert_{L^{p}}\lesssim \Vert \theta \Vert_{L^{p}},$$
which yields
\begin{equation*}
\displaystyle \int^{t}_{t^\prime}\Vert F(\theta(\tau))\Vert_{L^{p}}d\tau \lesssim \displaystyle \int^{t}_{t^\prime}\Vert \theta(\tau)\Vert_{L^{p}}d\tau\\
\lesssim \vert t-t^\prime \vert^{\frac{1}{2}} \Vert \theta \Vert_{L^2_{t}L^{p}}.
\end{equation*}
We use now Proposition \ref{prop5} with $\rho=2,$
\begin{eqnarray*}\label{d35}
\Vert \theta \Vert_{L^{2}_{t}L^{p}}&\le& \sum_{q \geqslant -1}\Vert \Delta_{q}\theta \Vert _{L^{2}_{t}L^{p}}\\
&\lesssim& \displaystyle \sum_{q \geqslant 0}2^{-q \frac{\alpha}{2}}\big(\Vert \Delta_{q}\theta^{0}\Vert_{L^{p}}+\Vert \theta^{0}\Vert_{L^{\infty}}\Vert \nabla v \Vert_{L^{1}_{t}L^{p}}\big)\\
&+& t^{\frac{1}{2}}(\Vert \Delta_{-1}\theta^{0}\Vert_{L^{p}}+\Vert \theta^{0}\Vert_{L^{\infty}}\Vert \nabla v \Vert_{L^{1}_{t}L^{p}})\\
&\lesssim& \Vert \theta^{0}\Vert_{B^{\frac{-\alpha}{2}}_{p,1}}+\Vert \theta^{0}\Vert_{L^{\infty}}\Vert \omega \Vert_{L^{1}_{t}L^{p}}\\
&\lesssim& \Vert \theta^{0}\Vert_{B^{-\alpha+1+\frac{2}{p}}_{p,1}\cap L^{\infty}}(1+\Vert \omega \Vert_{L^{1}_{t}L^{p}}),
\end{eqnarray*}
we have used the embedding $B^{-\alpha+1+\frac 2p}_{p,1}\hookrightarrow B^{\frac{-\alpha}{2}}_{p,1}$ (recall that $\alpha\le 2$). Therefore
$$\int^{t}_{t^\prime}\Vert F(\theta(\tau))\Vert_{L^{p}}d\tau \lesssim \vert t-t^\prime \vert^{\frac{1}{2}}\Vert \theta^{0}\Vert_{B^{-\alpha+1+\frac{2}{p}}_{p,1}\cap L^{\infty}}(1+\Vert \omega \Vert_{L^{1}_{t}L^{p}}).$$
Finally we obtain in \eqref{T26}
\begin{equation*}
\Vert v(t)-v(t^{\prime}) \Vert_{L^{p}}\lesssim \vert t-t^{\prime}\vert \Vert v \Vert_{L^{\infty}_{t}L^{p}}\Vert \nabla v \Vert_{L^{\infty}_{t}L^{\infty}}+\vert t-t^{\prime}\vert^{\frac{1}{2}}\Vert \theta^{0}\Vert_{B^{-\alpha+1+\frac{2}{p}}_{p,1}\cap L^{\infty}}(1+\Vert \omega \Vert_{L^{1}_{t}L^{p}}).
\end{equation*}
Combining Proposition \ref{prop7} and Proposition \ref{prop8} with the inequalities \eqref{T25} and the previous, we obtain
\begin{eqnarray}\label{T27}
\nonumber \Vert v(t)-v(t^\prime) \Vert_{B_{p,1}^{1+\frac{2}{p}}}&\lesssim& 2^{N(1+\frac{2}{p})}\bigg(\vert t-t^{\prime}\vert \Vert v \Vert_{L^{\infty}_{t}L^{p}}C_{0}e^{e^{C_{0}t}}+\vert t-t^{\prime}\vert^{\frac{1}{2}}\Vert \theta^{0}\Vert_{B^{-\alpha+1+\frac{2}{p}}_{p,1}\cap L^{\infty}}C_{0}e^{C_{0}t}\bigg)\\
&+& 2 \sum_{q > N}2^{q(1+\frac{2}{p})}\Vert \Delta_{q}v \Vert_{L_T^\infty L^{p}}.
\end{eqnarray}
We have by Proposition \ref{prop9} that $v \in \widetilde{L}^{\infty}_{T}B^{1+\frac{2}{p}}_{p,1},$ then for $\varepsilon >0,$ there exists an integer $N$ such that
$$\displaystyle \sum_{q > N}2^{q(1+\frac{2}{p})}\Vert \Delta_{q}v \Vert_{L_T^\infty L^{p}}\le \frac{\varepsilon}{4}.$$
It is enough to choose $\vert t-t^\prime \vert <\eta$ such that 
$$2^{N(1+\frac{2}{p})}\bigg(\vert t-t^{\prime}\vert \Vert v \Vert_{L^{\infty}_{t}L^{p}}C_{0}e^{e^{C_{0}t}}+\vert t-t^{\prime}\vert^{\frac{1}{2}}\Vert \theta^{0}\Vert_{B^{-\alpha+1+\frac{2}{p}}_{p,1}\cap L^{\infty}} C_{0}e^{C_{0}t}\bigg)<\frac{\varepsilon}{2}.$$
Finally we find in \eqref{T27} that
$$\Vert v(t)-v(t^\prime) \Vert_{B_{p,1}^{1+\frac{2}{p}}}\le \varepsilon.$$
This proves the continuity in time of the velocity.
\subsection{Appendix: Generalized Bernstein inequality}
The generalized Bernstein inequality is proved in \cite{cmz, D} for $0<\alpha\le2$ and $p\ge 2.$ Here we extend this inequality for the remaining case $p\in ]1,2]$. More precisely,  we have the following proposition.
\begin{prop}\label{prop10}
We assume that $\alpha \in]0,1]$ and $p>1.$ Then we have for every $G \in \mathcal{S}(\RR^{2})$ and $j\in\NN,$
\begin{equation*}
c 2^{j\alpha}\Vert\Delta_{j}G\Vert^{p}_{L^{p}}\le \displaystyle\int_{\RR^{2}}(\vert\DD\vert^{\alpha} \Delta_{j}G) \vert \Delta_{j}G\vert^{p-1} sign\; \Delta_{j}G\,dx,
\end{equation*}
where $c$ depend on $p.$
\end{prop}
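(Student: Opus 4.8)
The plan is to split $|\DD|^{\alpha}$ into a near–field part whose contribution is nonnegative by symmetrisation, and a far–field part which is small because $\Delta_{j}G$ is frequency–localised. Write $f:=\Delta_{j}G$, so $\widehat{f}$ is supported in an annulus $\{c_{1}2^{j}\le|\xi|\le c_{2}2^{j}\}$; put $\Phi(t):=|t|^{p-1}\operatorname{sign}t=|t|^{p-2}t$ and note $\int_{\RR^{2}}\Phi(f)\,f\,dx=\|f\|_{L^{p}}^{p}$, so the claim is $I:=\int_{\RR^{2}}(|\DD|^{\alpha}f)\,\Phi(f)\,dx\ge c\,2^{j\alpha}\|f\|_{L^{p}}^{p}$. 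First I would use the hypersingular representation, valid for $0<\alpha<2$ and $f\in\mathcal{S}(\RR^{2})$,
\[
|\DD|^{\alpha}f(x)=c_{\alpha}\int_{\RR^{2}}\frac{f(x)-f(x+y)}{|y|^{2+\alpha}}\,dy ,
\]
absolutely convergent for $\alpha<1$ (for $\alpha=1$ one reads it with $f(x)-\tfrac12(f(x+y)+f(x-y))$ in the numerator). Then I would fix a radial $\zeta_{0}\in C_{c}^{\infty}(\RR^{2})$ with $\zeta_{0}\equiv1$ on $B_{1}$, $\operatorname{supp}\zeta_{0}\subset B_{2}$, set $\zeta_{1}:=1-\zeta_{0}$ and a radius $R>0$ to be fixed later, and decompose $|\DD|^{\alpha}f=L_{0}f+L_{1}f$ by weighting the kernel with $\zeta_{0}(y/R)$ and $\zeta_{1}(y/R)$; accordingly $I=I_{0}+I_{1}$ with $I_{i}=\int(L_{i}f)\,\Phi(f)\,dx$.

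For the near part, the substitution $x\leftrightarrow x+y$, using the evenness of $\zeta_{0}$ and of $|y|^{-2-\alpha}$, yields
\[
I_{0}=\frac{c_{\alpha}}{2}\iint\zeta_{0}\!\Big(\tfrac{y}{R}\Big)\,\frac{\big(f(x)-f(x+y)\big)\big(\Phi(f(x))-\Phi(f(x+y))\big)}{|y|^{2+\alpha}}\,dx\,dy\ \ge\ 0 ,
\]
because $\Phi$ is nondecreasing. This is the one point where the hypothesis $\alpha\le1$ is used: the integrand is nonnegative and, as $y\to0$, it is $O(|y|^{-\alpha})$ off the zero set of $f$ and $O(|y|^{\,p-2-\alpha})$ on it, so the double integral converges absolutely exactly when $\alpha<p$; since $p>1$, the range $\alpha\le1$ covers every admissible $p$.

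For the far part, a direct computation gives $L_{1}f(x)=c_{\alpha}\big(\kappa_{\alpha}R^{-\alpha}f(x)-(K_{R}\ast f)(x)\big)$ with $K_{R}(y):=\zeta_{1}(y/R)|y|^{-2-\alpha}\ge0$ and $\kappa_{\alpha}:=\int_{\RR^{2}}\zeta_{1}(z)|z|^{-2-\alpha}\,dz\in(0,\infty)$, so $\|K_{R}\|_{L^{1}}=\kappa_{\alpha}R^{-\alpha}$. Hence, by Hölder and $\|\Phi(f)\|_{L^{p'}}=\|f\|_{L^{p}}^{p-1}$,
\[
I\ \ge\ I_{1}\ \ge\ c_{\alpha}\kappa_{\alpha}R^{-\alpha}\|f\|_{L^{p}}^{p}-c_{\alpha}\,\|f\|_{L^{p}}^{p-1}\,\|K_{R}\ast f\|_{L^{p}} .
\]
The hard part is to show that $\|K_{R}\ast f\|_{L^{p}}$ beats the trivial bound $\kappa_{\alpha}R^{-\alpha}\|f\|_{L^{p}}$ once $R\gg 2^{-j}$, and this is where the spectral localisation $f=\Delta_{j}f$ enters. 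Since $\widehat{K_{R}}$ is barely continuous when $\alpha<1$, I would not feed it to a Fourier multiplier theorem directly; instead I would decompose $K_{R}$ into smooth dyadic shells, $K_{R}=\sum_{k\ge1}\rho_{k}^{-2-\alpha}\Psi(\cdot/\rho_{k})$ with $\rho_{k}=2^{k-1}R$ and a fixed $\Psi\in C_{c}^{\infty}(\RR^{2}\setminus\{0\})$, so that each $\widehat{\Psi(\cdot/\rho_{k})}(\xi)=\rho_{k}^{2}\widehat{\Psi}(\rho_{k}\xi)$ is Schwartz. On $\operatorname{supp}\widehat{f}$ one has $|\rho_{k}\xi|\ge c_{1}\rho_{k}2^{j}\ge c_{1}R2^{j}$, so the rapid decay of $\widehat{\Psi}$ together with a routine multiplier estimate (Young's inequality after controlling the derivatives of $\rho_{k}^{2}\widehat{\Psi}(\rho_{k}\xi)$ on the annulus) gives $\|\Psi(\cdot/\rho_{k})\ast\Delta_{j}f\|_{L^{p}}\le C_{M}\,\rho_{k}^{2}(\rho_{k}2^{j})^{-M}\|\Delta_{j}f\|_{L^{p}}$ for every $M\ge0$; summing in $k$, $\|K_{R}\ast f\|_{L^{p}}\le C_{M}'\,R^{-\alpha}(R2^{j})^{-M}\|f\|_{L^{p}}$. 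I would then choose $R=A2^{-j}$ and fix $A=A(\alpha)$ large enough that $C_{1}'A^{-1}\le\tfrac12\kappa_{\alpha}$; the displayed inequality then becomes $I\ge\tfrac12c_{\alpha}\kappa_{\alpha}R^{-\alpha}\|f\|_{L^{p}}^{p}=\tfrac12c_{\alpha}\kappa_{\alpha}A^{-\alpha}\,2^{j\alpha}\|f\|_{L^{p}}^{p}$, which is the assertion. (The case $\alpha=1$ is handled identically with the symmetrised numerator, and since $j\in\NN$ no separate low–frequency argument is needed.)
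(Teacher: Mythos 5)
Your argument is correct, but it is a genuinely different proof from the one in the paper. The paper first invokes Ju's positivity inequality, $4\frac{p-1}{p^{2}}\Vert\,\vert \DD\vert^{\alpha/2}\vert G_{j}\vert^{p/2}\Vert_{L^{2}}^{2}\le\int(\vert \DD\vert^{\alpha}G_{j})\vert G_{j}\vert^{p-1}\mathrm{sign}\,G_{j}$, and then lower-bounds $\Vert\,\vert \DD\vert^{\alpha/2}(\vert G_{j}\vert^{p/2})\Vert_{L^{2}}^{2}$ by $c2^{j\alpha}\Vert G_{j}\Vert_{L^{p}}^{p}$ via a high/low frequency splitting, the composition estimates for $\vert G\vert^{p/2}$ in Sobolev--Besov spaces (Chen--Miao--Zhang for $p>2$, Sickel/Kateb for $1<p\le 2$, which is the whole point of the paper's appendix), and Danchin's Lemma A.5. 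You instead work directly with the hypersingular kernel: the near-field part is nonnegative by symmetrisation and the monotonicity of $t\mapsto\vert t\vert^{p-2}t$ (the same mechanism that underlies Ju's lemma, here used at the level of first differences), and the far-field part is controlled by exploiting the spectral localisation of $\Delta_{j}G$ to beat the trivial Young bound. This buys a uniform treatment of all $p>1$ with no composition estimates and no case distinction at $p=2$; what it costs is that two steps you label as routine do need to be written out, namely the $L^{1}$ bound $\Vert\Psi(\cdot/\rho_{k})\ast\widetilde{h}_{j}\Vert_{L^{1}}\le C_{M}\rho_{k}^{2}(\rho_{k}2^{j})^{-M}$ for a fattened Littlewood--Paley kernel $\widetilde{h}_{j}$ (obtained, e.g., by weighted $L^{2}$ estimates on the Fourier side), and the justification of the symmetrisation for $\alpha=1$ by truncating at $\vert y\vert>\varepsilon$ and passing to the limit using the absolute convergence of the symmetrised double integral. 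Two harmless imprecisions: the convergence condition for the symmetrised near-field integral is $\alpha<\min(p,2)$ rather than ``exactly when $\alpha<p$'' (irrelevant here since $\alpha\le1<p$), and the first dyadic shell of $\zeta_{1}$ carries a slightly different profile than the generic one, which changes nothing. Neither affects the validity of the proof.
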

\begin{proof}
We use the following Corollary (see \cite{N} for the proof).
\begin{cor}\label{coro}
Let $\alpha\in]0,1]$ and $p>1.$ Then we have,
\begin{equation*}
4\frac{p-1}{p^{2}}\Vert \vert \DD \vert^\frac{\alpha}{2} \vert G \vert^\frac{p}{2}\Vert^{2}_{L^{2}}\le \int_{\RR^{2}}(\vert \DD \vert^{\alpha} G) \vert G \vert^{p-1} sign\,G\;dx.
\end{equation*}
\end{cor}
It suffices thus to prove,
\begin{equation*}    
c 2^{j \alpha}\Vert\Delta_{j}G\Vert^{p}_{L^{p}}\le \Vert \vert \DD \vert^\frac{\alpha}{2}(\vert \Delta_{j}G \vert^\frac{p}{2})\Vert^{2}_{L^{2}}.
\end{equation*}
Let $N \in\NN,$ we define $\Delta_{j}G:=G_{j},$ then 
\begin{equation}\label{T28}
\Vert \vert \DD \vert(\vert G_{j}\vert^\frac{p}{2})\Vert_{L^{2}}\le \Vert S_{N} \vert \DD \vert(\vert G_{j}\vert^\frac{p}{2})\Vert_{L^{2}}+ \Vert (Id-S_{N}) \vert \DD \vert(\vert G_{j}\vert^\frac{p}{2})\Vert_{L^{2}}. 
\end{equation}
Let $s^{\prime}>0,$ then Bernstein inequality gives,
\begin{eqnarray}\label{T29}
\nonumber \big\Vert (Id-S_N) \vert \DD \vert(\vert G_j\vert^\frac{p}{2})\big\Vert_{L^{2}}&\le& \sum_{k \geq N}\big\Vert \Delta_k(\vert \DD \vert(\vert G_j\vert^\frac{p}{2}))\big\Vert_{L^{2}}\\
\nonumber &\lesssim& \sum_{k \geq N}2^{-k s^{\prime}}2^{k(1+s^{\prime})}\big\Vert \Delta_{k}(\vert G_j\vert^\frac{p}{2})\big\Vert_{L^{2}}\\
\nonumber &\lesssim& 2^{-N\,s^{\prime}}\big\Vert \vert G_{j}\vert^\frac{p}{2}\big\Vert_{B^{1+s^{\prime}}_{2,\infty}}\\
&\lesssim& 2^{-N\,s^{\prime}}\big\Vert \vert G_j \vert^\frac{p}{2}\big\Vert_{H^{1+s^{\prime}}}.
\end{eqnarray}
we have used in the last line the Besov embedding $H^{1+s^{\prime}}\hookrightarrow B^{1+s^{\prime}}_{2,\infty}.$\\
To estimate $\Vert \vert G_{j}\vert^\frac{p}{2} \Vert_{H^{1+s^{\prime}}},$ we will use the following Lemma.
\begin{lem}\label{lem7}
$1)$ Let $\gamma \geqslant 1$ and $s^{\prime} \in [0,\gamma[ \cap [0,2[.$ Then
$$\Vert \vert G \vert^{\gamma} \Vert_{H^{s^{\prime}}} \lesssim \Vert G \Vert_{B^{s^{\prime}}_{2 \gamma,2}} \Vert G \Vert^{\gamma-1}_{B^{0}_{2\gamma,2}},$$
$2)$ For $0 < \gamma \le 1\;,\,(p,r) \in [1,\infty]^{2}$ and $0 < s^{\prime} < 1+\frac{1}{p}.$ Then\\
$$\Vert \vert G \vert^{\gamma} \Vert_{B^{s^{\prime}\gamma}_{\frac{p}{\gamma},\frac{r}{\gamma}}} \lesssim \Vert G \Vert^{\gamma}_{B^{s^{\prime}}_{p,r}}.$$
\end{lem}
The first estimate is a particular case of a general result due to \cite{cmz}. The second is established by Sickel \cite{S} (see also Theorem 1.4 of \cite{K}).\\
We use Lemma \ref{lem7}-1) and Bernstein inequality for  $p > 2$, with $0<s^{\prime}<min(\frac{p}{2}-1,2),$
\begin{eqnarray*}\label{d39}
\Vert \vert G_{j} \vert^{\frac{p}{2}} \Vert_{H^{1+s^{\prime}}}&\lesssim& \Vert G_{j} \Vert_{B^{1+s^{\prime}}_{p,2}} \Vert G_{j} \Vert^{\frac{p}{2}-1}_{B^{0}_{p,2}}\\
&\lesssim& 2^{j(1+s^{\prime})} \Vert G_{j} \Vert^{\frac{p}{2}}_{L^{p}}.
\end{eqnarray*}
For $1<p \le 2 :$ we use Lemma \ref{lem7}-2) and Bernstein inequality with $0 < s^{\prime}<1+\frac{1}{p},$
\begin{eqnarray*}\label{d40}
\Vert \vert G_{j} \vert^{\frac{p}{2}} \Vert_{H^{1+s^{\prime}}}&\lesssim& \Vert G_{j} \Vert^{\frac{p}{2}}_{B^{\frac{2(1+s^{\prime})}{p}}_{p,p}}\\
&\lesssim& 2^{j(1+s^{\prime})} \Vert G_{j} \Vert^{\frac{p}{2}}_{L^{p}}.
\end{eqnarray*}
We deduce thus from \eqref{T29} and Lemma \ref{lem7},
\begin{equation}\label{T30}
\Vert (Id-S_{N}) \vert \DD \vert(\vert G_{j}\vert^\frac{p}{2})\Vert_{L^{2}}\le 2^{-N\,s^{\prime}}2^{j(1+s^{\prime})}\Vert G_{j}\Vert_{L^{p}}^\frac{p}{2}.
\end{equation}
To estimate the first norm of \eqref{T28}, we use Bernstein inequality,
\begin{eqnarray}\label{T31}
\nonumber\Vert S_{N} \vert \DD \vert(\vert G_{j}\vert^\frac{p}{2})\Vert_{L^{2}}&\lesssim& \Vert S_{N} \vert \DD \vert^{1-\frac{\alpha}{2}}(\vert \DD \vert^{\frac{\alpha}{2}}(\vert G_{j}\vert^\frac{p}{2}))\Vert_{L^{2}}\\
\nonumber&\lesssim& 2^{N(1-\frac{\alpha}{2})} \Vert S_{N} \vert \DD \vert^{\frac{\alpha}{2}}(\vert G_{j}\vert^\frac{p}{2})\Vert_{L^{2}}\\
&\lesssim& 2^{N(1-\frac{\alpha}{2})} \Vert \vert \DD \vert^{\frac{\alpha}{2}}(\vert G_{j}\vert^\frac{p}{2})\Vert_{L^{2}}.
\end{eqnarray}
Putting \eqref{T30} and \eqref{T31} into \eqref{T28}, we get
\begin{equation*}
\Vert \vert \DD \vert(\vert G_{j}\vert^\frac{p}{2})\Vert_{L^{2}}\lesssim 2^{N(1-\frac{\alpha}{2})} \Vert \vert \DD \vert^{\frac{\alpha}{2}}(\vert G_{j}\vert^\frac{p}{2})\Vert_{L^{2}}
 + 2^{-N\,s^{\prime}}2^{j(1+s^{\prime})}\Vert G_{j} \Vert^\frac{p}{2}_{L^{p}}. 
\end{equation*}
According to Lemma A.5 of \cite{D}, we have for $1< p < \infty,$
$$c_{p}\, 2^{j} \Vert G_{j} \Vert^{\frac{p}{2}}_{L^{p}} \le \Vert \vert \DD \vert (\vert G_{j} \vert^{\frac{p}{2}} \Vert_{L^{2}}.$$
Combining both last estimates we get,
\begin{equation*}
c_{p}2^{j} \Vert G_{j} \Vert^{\frac{p}{2}}_{L^{p}} \le 2^{N(1-\frac{\alpha}{2})} \Vert \vert \DD \vert^{\frac{\alpha}{2}}(\vert G_{j}\vert^\frac{p}{2})\Vert_{L^{2}} + 2^{s^{\prime}(j-N)}2^{j}\Vert G_{j} \Vert^\frac{p}{2}_{L^{p}}. 
\end{equation*}  
Taking $N-j=N_{1}$ such that $2^{-N_{1}\,s^{\prime}} \le \frac{1}{2}.$ Therefore
\begin{equation*}
c2^{j\,\alpha} \Vert G_{j} \Vert^{p}_{L^{p}} \le \Vert \vert \DD \vert^{\frac{\alpha}{2}}(\vert G_{j}\vert^\frac{p}{2})\Vert^{2}_{L^{2}}.
\end{equation*}
with $c$ depend on $p.$ This proves the Proposition.
\end{proof}

\end{document}